\newcommand{\virgolette}[1]{``#1''}
\newtheorem{theorem}{Theorem}[section]
\newtheorem{coro}[theorem]{Corollary}
\newtheorem{remark}[theorem]{Remark}
\newtheorem{prop}[theorem]{Proposition}
\theoremstyle{definition}
\theoremstyle{remark}
\newcommand{\iii}{{\, \vert\kern-0.25ex\vert\kern-0.25ex\vert\, }}
\DeclareMathOperator{\spn}{span}
\newcommand{\ffi}{\varphi}
\newcommand{\vf}{\varphi}
\newcommand{\ds}{\displaystyle}
\newcommand{\Z}{\mathbb{Z}}
\newcommand{\N}{\mathbb{N}}
\newcommand{\R}{\mathbb{R}}
\newcommand{\Q}{\mathbb{Q}}
\newcommand{\C}{\mathbb{C}}
\newcommand{\Hi}{\mathscr{H}}
\newcommand{\Si}{\mathscr{S}}
\newcommand{\dd}{\partial}
\newcommand{\ra}{\rangle}
\newcommand{\la}{\langle}
\newcommand{\Ti}{\mathcal{T}}
\title{Networks of sign-changing metamaterials: existence and spectral properties}
\author{
{Ka\"{\i}s Ammari,\footnote{LR Analysis and Control of PDEs, LR 22ES03, Department of Mathematics, Faculty of Sciences of Monastir, University of Monastir, Tunisia; e-mail: kais.ammari@fsm.rnu.tn}}
\and{Alessandro Duca,\footnote{Universit\'e de Lorraine, CNRS, INRIA, IECL, F-54000 Nancy, France;  e-mail: alessandro.duca@inria.fr}}
\and{Eric Bonnetier,\footnote{Institut Fourier, Universit\'e Grenoble Alpes, 100 Rue des Math\'ematiques, 38610 Gi\`eres, France;  e-mail: eric.bonnetier@univ-grenoble-alpes.fr}}
}
\begin{document}

\maketitle

\begin{abstract}  
  We study composite assemblages of dielectrics and metamaterials with respectively positive and negative material parameters. In the continuum case, for a scalar equation, such media may exhibit so-called plasmonic resonances for certain values of the (negative) conductivity in the metamaterial.
This work investigates such resonances, and the associated eigenfunctions, in the case of composite conducting networks. 
Unlike the continuous media, we show a surprising specific dependence on the geometry of the network of the resonant values. We also study how the problem is affected by the choice of boundary conditions on the external nodes of the structure. 

\medskip
\noindent
{\bf AMS subject classifications:} 34B45, 47B25, 34L05.

\medskip
\noindent
{\bf Keywords:} Metamaterials, networks, T-coercivity, spectral decomposition.  

\end{abstract}

\setcounter{tocdepth}{1}
\tableofcontents
\setcounter{tocdepth}{1}

\section{Introduction}

Negative index materials possess spectacular properties of concentration and localization 
of light, which are very interesting for applications, in particular in optics and communication. 
The first works on negative index materials may date back to Veselago~\cite{Veselago} in 1969.
They have really become a subject of active research about two decades ago, when the
development of lithographic techniques has made possible the manufacturing of 
metallic objects with dimensions smaller than the wavelengths in the range of visible light.
Indeed, in this range of frequencies, objects of small dimensions made of
metals such as silver, gold or aluminium, may have complex dielectric coefficients with
a negative real part and a small imaginary part, and can exhibit concentration effects
(hot spots) when interacting with an incident electromagnetic wave. Such features
find useful applications in medical imaging or microscopy, such as for instance 
single molecule detectors.

\medskip

The simplest model put forward to represent the associated resonance effect
is the so-called plasmonic electrostatic model~\cite{Grieser} for the voltage 
potential $u$
\begin{equation}\label{eq_plasmonic}
\left\{\begin{array}{clll}
\textrm{div}(a(x) \nabla u(x)) &=& 0, & \textrm{in}\; \Omega
\\[6pt]
u(x) &=& 0, &\textrm{in}\; \partial \Omega
\end{array}\right.
\end{equation}
where $\Omega \subset \R^n$ is a domain that represents a composite medium
made of a subset $D$ of metamaterial with conductivity $k < 0$, embedded in
a background dielectric material of conductivity 1, so that 
\begin{eqnarray*}
\alpha(x) &=& 1 + (k-1)1_{D}(x), \quad x \in \Omega.
\end{eqnarray*}
Resonant states are defined as non-trivial solutions to~\eqref{eq_plasmonic},
which, in view of the Lax Milgram Lemma, may only exist when $k < 0$.
\medskip

There is a large body of work on the mathematical analysis of the well-posedness
of~\eqref{eq_plasmonic} (see for instance~\cite{AmmariMillienZhang,AmmariMillien,AmmariBryn,BonnetierZhang1,BonnetierZhang,CostabelStefan,CostableDaugeNicaise,Grieser,Mayergoyz,Mayergoyz1}), where techniques from potential theory or 
from variational analysis are used. The solution to~\eqref{eq_plasmonic} can indeed be represented
as $u = S_D \varphi$, {\it i.e.} as the single layer potential of a function~$\varphi \in H^{-1/2}(\partial D)$, 
and one is led to study an integral equation of the form
$(\lambda I - K^*)\varphi = 0$, where $\lambda = \ds\frac{k+1}{2(k-1)}$ and $K^*$ is the associated Neumann-Poincar\'e operator
\begin{eqnarray*}
K^* \varphi &=&
\ds\int_{\partial D} \ds\frac{\nu(x) \cdot (x-y)}{|x-y|^2} \varphi(y) \,d \sigma(y).
\end{eqnarray*}
\medskip

When $D$ is smooth, $K^*~: H^{-1/2}(\partial D) \rightarrow H^{-1/2}(\partial D)$ is a compact
operator, and its spectrum is formed by a sequence of eigenvalues of finite multiplicity, that
converge to $0$. The latter value $\lambda = 0$ corresponds to a conductivity 
\begin{align}\label{-1}k = -1
\end{align}
inside
the inclusion $D$, which in this situation is the only value of the conductivity for which
the operator $Au = \textrm{div}\Big(1 + (k-1)1_{D}(x) \nabla u\Big)$ looses its Fredholm 
character~\cite{BonnetChesnelCiarlet1,BonnetChesnelCiarlet2}.

\medskip

One could also investigate the well-posedness of second order scalar equations with
sign-changing coefficient using the notion of 
T-coercivity~\cite{BonnetChesnelCiarlet1,BonnetChesnelCiarlet2,karim,ciarlet,NicaiseVenel},
which relates to variational methods. A bilinear form
$a~: H^1(\Omega) \times H^1(\Omega)~\longrightarrow~\R$ is T-coercive if one can find an isomorphism 
$$T~:~H^1(\Omega)~\longrightarrow~H^1(\Omega)$$ such that
$a(T\cdot,\cdot)$ satisfies the hypotheses of the Lax-Milgram Lemma.
In the framework of~\eqref{eq_plasmonic}, T-coercivity may only provide necessary conditions 
on the values of $k$ for which the associated operator is well-posed. However, contrarily
to the integral equation approach, it may be applied to situations where the coefficients
are not merely piecewise constant. The T-coercivity has been also adopted in homogenization problems with sign-changing materials in \cite{kar3, kar2, kar1}.

\subsection*{Sign-changing scalar problems on networks and our results}
In this work, we investigate the behaviour of networks made of mixtures of dielectric 
and metamaterials. We study the associated electrostatic plasmonic equation for star-shaped
and tadpole networks, which takes the variational form~: find $u \in H$, such that for 
all $v \in H$,
\begin{align}\label{prob_intro}
\sum_{i=1}^N \ds\int_{e_i} a_i(x) u_i^\prime(x) v_i^\prime(x)\, dx
= \sum_{i=1}^N \ds\int_{e_i} f_i(x) v_i(x)\, dx.
\end{align}
Here $(e_i)_{i\leq N}$ denotes the $N$ edge of the network,
$f = (f_1, \dots, f_N)$ is a source term, and the field
$u = (u_1, \dots, u_N)$ is sought in a subspace $H$ of 
$\ds \Pi_{i=1}^N H^1(e_i)$,
and satisfies transmission and boundary conditions.
This problem can be studied much more directly than the case of continuous media,
since the PDE can be reduced to a system of ODE's coupled via the transmission 
conditions between the edges of the network. In this work, we proceed as follows.

\begin{itemize}
    \item We start by considering star-graph networks equipped with Dirichlet boundary conditions on the external vertices. We firstly study two-phase networks composed by $N$ equal edges, where the conductivity of the $D$ dielectric edges is $1$ and the negative ones have conductivity $k<0$. We ensure the well-posedness of the problem \eqref{prob_intro} via the T-coercivity when \begin{align}\label{interdit}k\neq -\frac{D}{N-D}\end{align}  (see Theorem \ref{th_star_N}). Notice that the resonant value depends on the geometry of the structure and  differs from the value $-1$, usually appearing for smooth continuous media \eqref{-1}. Secondly, we extend the result to the case of general star-graph networks equipped with Dirichlet boundary conditions (Corollary \ref{coro_star_Nbis}, Theorem \ref{th_star_N_more} and Corollary \ref{coro_star_N_more}). Also in the general case, the geometry of the graph plays an important role in the admissible conductivities for the well-posedness of \eqref{prob_intro}. 

\item Afterwards, we consider star-shaped networks with Dirichlet and Neumann boundary conditions and composed by homogenous materials. We use a different technique from the T-coercivity: we study the transmission relation associated to the variational problem \eqref{prob_intro}. Here, the loss of the Fredholm character occurs at some specific values, in general different from $-1$, again depending on the geometry of the structure. Also in this case, we start by considering two-phase networks (see Theorem \ref{th_star_Neuman_constant}), and later we extend the result to the general case (Corollary \ref{coro_star_Neuman}). An application of our general result covers the case of a star-network with edges of conductivities $k_j$ and lengths $L_j$. In this framework, we denote $I_d$ the edges equipped with Dirichlet boundary conditions and the well-posedness of \eqref{prob_intro} is ensured when 
\begin{align}\label{gen}\sum_{l\in I_d} \frac{k_l}{L_l} \neq 0.\end{align}
The identity \eqref{gen} shows that the well-posedness of \eqref{prob_intro} is not affected by the presence of edges equipped with Neumann boundary conditions, but only by the "Dirichlet part" of the network. We conclude the first part of the work by applying the previous techniques to study the well-posedness for tadpole networks (Theorem \ref {th_tadpole}, Theorem \ref {th_tadpole_weird} and Corollary \ref{th_tadpole_weirdbis}). Notice that this approach can be applied to any network: the PDE is reduced to a linear system of equation representing the transmission conditions between the edges of the network. Clearly, the more the structure is complex, the more the study of the well-posedness of the system becomes complicated.

\item The second part of the work studies the spectral problem associated with \eqref{prob_intro} for the networks considered before (Section \ref{spec_star},  Section~\ref{spec_star_neumann} and Section \ref{spec_tad}). We seek for $(u,\lambda)$ verifying
\begin{eqnarray} \label{eq_varform1}
\forall\; v \in H, \quad
\sum_{i=1}^N \ds\int_{e_i} a_i(x) u_i^\prime(x) v_i^\prime(x)\, dx
&=& \lambda \sum_{i=1}^N \ds\int_{e_i}   u_i(x) v_i(x) dx.
\end{eqnarray}
We characterize the spectral elements of the operator in different frameworks, which has both positive and
negative eigenvalues. This implies that one cannot expect to construct a solution to 
a parabolic equation for a general initial datum, even when the corresponding stationary
operator is T-coercive. One could, however, construct solutions for a Schr\"odinger-type equation
$$i\partial_t u(t,x) + \partial_x \Big( a(x) \partial_x u(t,x) \Big) \;=\; f(t,x).$$
Subsequently, we construct another `natural' family of eigenfunctions, defined as the $(u,\lambda)$'s
that satisfy
\begin{eqnarray} \label{eq_varform}
\forall\; v \in H, \quad
\sum_{i=1}^N \ds\int_{e_i} a_i(x) u_i^\prime(x) v_i^\prime(x)\, dx
&=& \lambda \sum_{i=1}^N \ds\int_{e_i} a_i(x) u_i(x) v_i(x) dx,
\end{eqnarray}
associated with the bilinear form  $(u,v)_a  =  \ds \sum_{i=1}^N \ds\int_{e_i} a_i(x) u_i(x) v_i(x) \, dx. $
This bilinear form is not a scalar product, as the $a_i$'s take positive 
and negative values. We show however that one can find a complete family of eigenfunctions
of this form, which would allow one to define a solution to a `pseudo-parabolic' equation of the form
$$a(x)\partial_t u(t,x) + \partial_x \Big( a(x) \partial_x u(t,x) \Big) \;=\; f(t,x).$$

    \end{itemize}

\subsection*{Acknowledgments}

The authors acknowledge the financial support from VINCI Program 2020 of the “Université Franco Italienne" (UFI) for the project “Graphes quantiques de métamatériaux". The second author would like to thank the colleagues Karim Ramdani and Renata Bunoiu for the fruitful discussions on the behaviour of `indefinite Laplacians' and on the well-posedness of sign-changing scalar problems.

\section{Star-graph networks with Dirichlet boundaries}\label{star_2materials_costant}

Let us consider a star-shaped network composed by $N$ edges of lengths $\{L_j\}_{1 \leq j\leq N}$ some composed by a material with positive conductivity and some with negative conductivity. We 
represent the network with a 
star-graph $\Si$ composed by $N$ edges $\{e_j\}_{1 \leq j\leq N}$.
We denote by $v$ the internal vertex connecting all the edges of $\Si$ and we 
parameterize each $e_j$ 
with a coordinate going from $0$ to its length $L_j$ in $v$.
\begin{figure}[H]
\centering
\includegraphics[width=\textwidth-100pt,height=50pt]{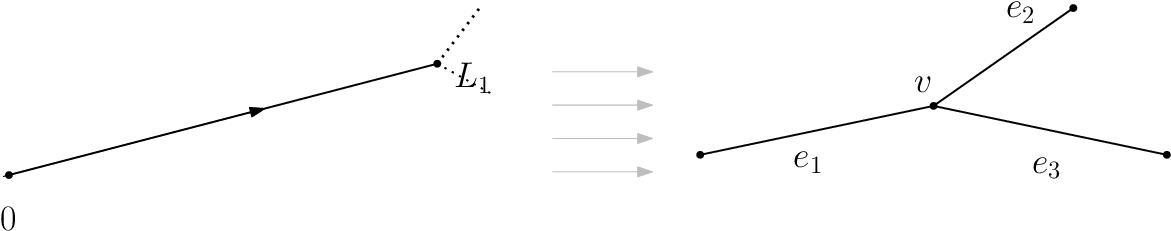}
\caption{The figure shows the parametrization of a star-graph with $3$ edges.}
\label{parametrizzazione}
\end{figure}

We define quantum states on $\Si$ as functions
$\psi=(\psi_1,...,\psi_N)\in 
L^2(\Si):= \ds \prod_{j=1}^NL^2(e_j,\R),$ where each $\psi_j:e_j\rightarrow \R$. The Hilbert space $L^2(\Si)$ 
is equipped with the norm $\|\cdot\|_{L^2}$ induced by the scalar product 
$$(\psi,\ffi)_{L^2}:=\sum_{1 \leq j\leq N}(\psi_j,\ffi_j)_{L^2(e_j,\R)}=\sum_{1 \leq j\leq 
N}\int_{0}^{L_j}{\psi_j}(x)\ffi_j(x)dx,\ \ \ \  \ \ \forall \psi,\ffi\in L^2(\Si).$$
Let $\widehat H^1(\Si):= \ds \prod_{j=1}^N
H^1(e_j,\R)$. We denote by 
$$H^1(\Si)=\{\psi\in 
\widehat H^1(\Si)\ :\ \psi_1(L_1)=...=\psi_N(L_N)\},$$
$$
H^1_0(\Si)=\{\psi\in H^1(\Si)\ :\ \psi_1(0)=....=\psi_N(0)=0\}.
$$
By duality according to the scalar product of $L^2(\Si)$, we define the space $H^{-1}(\Si) $. For every $f=(f_1,...,f_N)\in H^{-1}(\Si)$, we 
investigate the existence of $\psi=(\psi_1,...,\psi_N)\in H^1_0(\Si)$ solution of the following 
stationary problem in 
$L^2(\Si)$
\begin{equation}\label{mainxSNK}\begin{split}
-\dd_x\Big(k_j(x)\dd_x\psi_j(x)\Big)= f_j(x),\ \ \ \ \ \ \ \ &x\in (0,L_j),\ 1 \leq j\leq N,\\
\end{split}
\end{equation}
where $k_j:x\in [0,L_j] \rightarrow \R^*$ are the continuous functions representing the properties of the material composing the network and there exist $I_j,S_j>0$ such that 
$$I_j\leq |k_j(x)|\leq S_j,\ \ \ \ \ \ \ \forall x\in [0,L_j].$$

\subsection{Two-phase networks with constant conductivities}

Let us study the case of star graphs composed of $N$ edges composed of two materials, one with positive conductivity and the other with negative conductivity. In the following theorem, we assume that all the conductivities are constants and that the star graph is equilateral.

\begin{theorem}\label{th_star_N}
Let us consider the problem \eqref{mainxSNK} with $L_1=...=L_N=L\in\R^*_+$. Assume $(k_1,...,k_N)$ be such that  $k_j=1$ for every $1 \leq j\leq D$ and 
$k_j=k^-$ with $k^-<0$ for 
every $D+1\leq j\leq N$. If $$|k^-|\neq \frac{D}{N-D},$$ then for every $f\in H^{-1}(\Si)$, there exists a unique solution $\psi\in H^1_0(\Si)$ solving \eqref{mainxSNK}.
\end{theorem}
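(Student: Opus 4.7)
The plan is to apply the T-coercivity method: define the bilinear form
$$a(\psi,\varphi) = \sum_{j=1}^N \int_{e_j} k_j\, \psi_j'\, \varphi_j'\, dx$$
on $H_0^1(\Si)$, which is continuous but indefinite, and construct a bounded isomorphism $T:H_0^1(\Si)\to H_0^1(\Si)$ such that $a(T\psi,\psi) \geq c\,\|\psi\|_{H^1(\Si)}^2$; the conclusion then follows from Lax--Milgram applied to $a(T\cdot,\cdot)$.

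The first step is an $a$-orthogonal decomposition. Every $\psi \in H_0^1(\Si)$ can be written as $\psi = \alpha\phi_v + \tilde\psi$, where $\alpha = \psi_1(L) = \cdots = \psi_N(L)$ is the common vertex value, $\phi_v$ is the canonical lifting of the vertex value given on each edge by $\phi_v(x) = x/L$, and $\tilde\psi_j \in H_0^1(0,L)$ (it vanishes at both the leaf $0$ and the vertex $L$). A direct computation gives
$$a(\phi_v,\phi_v) \;=\; \frac{D+(N-D)k^-}{L} \;=\; \frac{D-(N-D)|k^-|}{L}, \qquad a(\phi_v,\tilde\psi)=0,$$
the second equality because $\tilde\psi_j$ vanishes at both endpoints. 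Thus the hypothesis $|k^-|\neq D/(N-D)$ is exactly the condition $a(\phi_v,\phi_v)\neq 0$, which identifies the resonant value and makes the geometric dependence on $D$ and $N$ transparent.

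The second step is to define $T$ by flipping signs in the right combination. When $|k^-|<D/(N-D)$, set $(T\psi)_j = \alpha\phi_v|_{e_j}+\epsilon_j\tilde\psi_j$ with $\epsilon_j=+1$ for $j\leq D$ and $\epsilon_j=-1$ for $j>D$; when $|k^-|>D/(N-D)$, set $(T\psi)_j = -\alpha\phi_v|_{e_j}+\epsilon_j\tilde\psi_j$ with the same $\epsilon_j$. In both cases $T\psi\in H_0^1(\Si)$ (the leaf value is $0$ and the vertex value is the same $\pm\alpha$ on every edge), and $T^2=\mathrm{Id}$, so $T$ is a bounded involutive isomorphism. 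Using $a(\phi_v,\tilde\psi)=0$, a straightforward term-by-term computation yields
$$a(T\psi,\psi) \;=\; \frac{\bigl|D-(N-D)|k^-|\bigr|}{L}\,\alpha^2 \;+\; \sum_{j\leq D}\|\tilde\psi_j'\|_{L^2}^2 \;+\; |k^-|\sum_{j>D}\|\tilde\psi_j'\|_{L^2}^2,$$
where the sign of $\alpha\phi_v$ in $T$ was precisely chosen so that the $\alpha^2$-coefficient comes out positive. Combined with the Poincaré inequality on each edge (using $\psi_j(0)=0$), this controls $\|\psi\|_{H^1(\Si)}^2$ from below, yielding coercivity.

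The main conceptual obstacle is recognizing that $\phi_v$ is the correct object to isolate, i.e. that the natural decomposition $\psi = \alpha\phi_v+\tilde\psi$ happens to be $a$-orthogonal. Once this is noticed, the rest is bookkeeping: the two cases $|k^-|\lessgtr D/(N-D)$ simply record whether the one-dimensional `vertex mode' $\mathrm{span}(\phi_v)$ lies in the positive or negative part of the indefinite form $a$, while the remaining $H_0^1(0,L)$-components are handled by the elementary sign-flip that turns $-|k^-|$ into $+|k^-|$. The resonant value $k^-=-D/(N-D)$ is precisely the one for which the vertex mode becomes $a$-isotropic, which is both the reason for the failure of T-coercivity there and for the departure from the continuum value $k=-1$.
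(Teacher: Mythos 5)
Your proposal is correct, and although it stays inside the same overall strategy as the paper (T-coercivity plus Lax--Milgram on the form $a_k$), the operator $T$ you build and the way coercivity is obtained are genuinely different. The paper's $T_1,T_2$ act componentwise by flipping the sign on one group of edges and correcting the other group with twice the \emph{average} of the opposite components (e.g. $-\psi_l+\frac{2}{D}\sum_{j\leq D}\psi_j$ on the negative edges), and the resulting cross terms are absorbed by Young's inequality with a tuned parameter $\eta$, which is where the threshold $D/(N-D)$ emerges. You instead use the splitting $\psi=\alpha\phi_v+\tilde\psi$ into the piecewise-linear vertex mode and edgewise $H^1_0$ parts; since $k_j\phi_v'$ is constant on each edge and $\tilde\psi_j$ vanishes at both endpoints, this splitting is exactly $a$-orthogonal, so $a(T\psi,\psi)$ is computed as an identity (no estimates, no $\eta$), the excluded value appears transparently as $a(\phi_v,\phi_v)=0$, and the coercivity constant is explicit. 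Your $T$ differs from the paper's only in replacing the average of the opposite components by the linear lifting $2\alpha x/L$ of the common trace (e.g. $-\psi_l+2\alpha x/L$ versus $-\psi_l+\frac{2}{D}\sum_{j\leq D}\psi_j$), and all the stated steps check out: $T$ preserves the vertex continuity and the Dirichlet conditions, $T^2=\mathrm{Id}$, the cross terms vanish edge by edge, and the identity for $a(T\psi,\psi)$ together with $\|\psi_j'\|^2_{L^2}=\alpha^2/L+\|\tilde\psi_j'\|^2_{L^2}$ and the Poincar\'e inequality gives coercivity, so Lax--Milgram yields existence and uniqueness exactly as in the paper. What each approach buys: yours gives a sharper, cleaner mechanism (the resonance is precisely the $a$-isotropy of the one-dimensional vertex mode) and avoids parameter juggling; the paper's average-based transfer is the standard T-coercivity device and is the form it reuses verbatim for the variable-coefficient case of Theorem \ref{th_star_N_more}, whereas your variant would there require replacing $x/L$ by the $k_j$-harmonic lifting of the vertex value (which still works, and in fact keeps the orthogonality exact).
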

\begin{proof}
The proof is based on the T-coercivity approach \cite{BonnetChesnelCiarlet1,BonnetChesnelCiarlet2,karim,ciarlet,NicaiseVenel}. We introduce 
the bilinear form 
$a_k(\cdot,\cdot)$ in $H^1_0(\Si)\times H^1_0(\Si)$ corresponding to \eqref{mainxSNK} and such that, for every $\psi,\ffi\in 
H^1_0(\Si)$,
$$a_k(\psi,\ffi):=\sum_{j=1}^D(\dd_x{\psi_j},\dd_x\ffi_j)_{L^2(0,L)}+k^-\sum_{j=D+1}^N(\dd_x\psi_j,
\dd_x\ffi_j)_{L^2(0,L)}.$$
Denoted $\la\cdot,\cdot\ra$ the duality product between $H^{-1}(\Si)$ and 
$H^1_0(\Si)$, the result is ensured by proving that, for every $f\in H^{-1}(\Si)$, there exists $\ffi\in 
H^1_0(\Si)$ such that 
$$a_k(\ffi,\psi)=\la f,\psi \ra,\ \ \ \ \ \ \ \ \forall \psi\in H^1_0(\Si).$$

\smallskip

\noindent
{\bf 1) Case $|k^-|<D/(N-D)$.} Let us introduce the operator 
\begin{align*}&T_1:(\psi_1,...,\psi_N)\in H^1_0(\Si)\mapsto \\
&\Bigg(\psi_1,...,\psi_D,-\psi_{D+1}+\frac{2}{D}\sum_{j=1}^D\psi_j,...,-\psi_{N}+\frac{2}{D}\sum_{j=1}
^D\psi_j\Bigg)\in H^1_0(\Si).\end{align*}
Thanks to 
the young's inequality, for every $\eta>0$,
\begin{equation*}\begin{split}
a_k(\psi,T_1\psi)&=\sum_{l=1}^D\|\dd_x{\psi_l}\|^2_{L^2(0,L)}-k^-\sum_{l=D+1}^N\|\dd_x{\psi_l}\|^2_{L^2(0,L)}\\
&
+\frac{2k^-}{D} \sum_{l=D+1}^N\sum_{j=1}^D(\dd_x{\psi_l},\dd_x{\psi_j})_{L^2(0,L)}
\\    
&\geq \sum_{l=1}^D\|\dd_x{\psi_l}\|^2_{L^2(0,L)}+|k^-|\sum_{l=D+1}^N\|\dd_x{\psi_l}\|^2_{L^2(0,L)}\\
&
-\frac{|k^-|}{D} 
\sum_{l=D+1}^N\sum_{j=1}^D\Big(\frac{1}{\eta}\|\dd_x{\psi_l}\|^2_{L^2(0,L)}+\eta\|\dd_x{\psi_j}\|^2_{L^2(0,L)}\Big)
\\ 
&\geq 
\Big(1-\frac{|k^-|(N-D)}{D\eta}\Big)\sum_{l=1}^D\|\dd_x{\psi_l}\|^2_{L^2(0,L)}+|k^-|(1-\eta)\sum_{l=D+1}
^N\|\dd_x{\psi_l}\|^2_{L^2(0,L)}.\\  
\end{split}\end{equation*}
Now, for every $|k^-|<D/(N-D)$, we set $\eta=\frac{|k^-|(N-D)}{D(1-\epsilon)}$ with $\epsilon\in 
\big(0,1-\frac{|k^-|(N-D)}{D}\big)$ which 
implies 
$\eta\in (0,1)$ and then 
$$1-\frac{|k^-|(N-D)}{D\eta}=\epsilon>0,\ \ \ \ \  \ \ \ \  \ 1- \eta>0.$$
Finally, $T_1$ is a diffeomorphism from $H^1_0(\Si)$ to itself such that $T_1\circ T_1=Id $ and the bilinear  
form $ \widetilde a_k(\cdot,\cdot):=a_k(\cdot,T_1\cdot)$ is coercive over $H^1_0(\Si) \times H^1_0(\Si)$. Fixed $f\in 
H^{-1}(\Si)$, we call $l(\cdot):=\la f,\cdot \ra$ and $l_1(\cdot):=l(T_1\cdot)$ is a 
continuous linear form in $H^1_0(\Si)$. The Lax-Milgram's theorem yields the existence of a unique $\ffi\in 
H^1_0(\Si)$ such that $\widetilde a_k(\ffi,\psi)=l_1(\psi)$ for every $\psi\in H^1_0(\Si)$ continuously depending on the form 
$l_1$. Since $T_1$ is an isomorphism, there exists a unique $\ffi\in H^1_0(\Si)$ such that 
$a_k(\ffi,\psi)=l(\psi)$ for every $\psi\in H^1_0(\Si)$. This fact concludes 
the proof when $|k^-|<D/(N-D)$.

\smallskip

\noindent
{\bf 2) Case $|k^-|>D/(N-D)$.} Let us introduce the operator 
\begin{align*}&T_2:(\psi_1,...,\psi_N)\in H^1_0(\Si)\mapsto \\
&\Bigg(\psi_1-\frac{2}{N-D}\sum_{j=D+1}^N\psi_j,...,\psi_D-\frac{2}{N-D}\sum_{j=D+1}^N\psi_j,-\psi_{D+1}, 
...,-\psi_N\Bigg)\in H^1_0(\Si).\end{align*}
Thanks to 
the young's inequality, for every $\eta>0$,
\begin{equation*}\begin{split}
a_k(\psi,T_2\psi)&=\sum_{l=1}^D\|\dd_x{\psi_l}\|^2_{L^2(0,L)}+|k^-|\sum_{l=D+1}^N\|\dd_x{\psi_l}\|^2_{L^2(0,L)}\\
&-\frac{2}{N-D}\sum_{j=1}^D \sum_{l=D+1}^N(\dd_x{\psi_j},\dd_x{\psi_l})_{L^2(0,L)}
\\    
&\geq \sum_{l=1}^D\|\dd_x{\psi_l}\|^2_{L^2(0,L)}+|k^-|\sum_{l=D+1}^N\|\dd_x{\psi_l}\|^2_{L^2(0,L)}\\
&-\frac{1}{N-D} 
\sum_{l=D+1}^N\sum_{j=1}^D\Big(\eta\|\dd_x{\psi_l}\|^2_{L^2(0,L)}+\frac{1}{\eta}\|\dd_x{\psi_j}\|^2_{L^2(0,L)}\Big)
\\ 
&\geq 
(1-\eta)\sum_{l=1}^D\|\dd_x{\psi_l}\|^2_{L^2(0,L)}+\Big(|k^-|-\frac{D}{(N-D)\eta}\Big)\sum_{l=D+1}
^N\|\dd_x{\psi_l}\|^2_{L^2(0,L)}.\\  
\end{split}\end{equation*}
Now, for every $|k^-|>D/(N-D)$, we set $\eta=\frac{D}{(N-D)(|k^-|-\epsilon)}$ with $\epsilon\in 
\big(0,|k^-|-\frac{D}{N-D}\big)$ which 
implies 
$\eta\in (0,1)$ and then 
$$|k^-|-\frac{D}{(N-D)\eta}=\epsilon>0,\ \ \ \ \  \ \ \ \  \ 1- \eta>0.$$
The claim is then proved as in the previous case thanks to the coercivity of the bilinear form $a_k(\cdot,T_2\cdot)$.
\end{proof}

We are finally ready to provide the statement in the general case where the star network is composed by edges of $2$ different lengths and materials.

\begin{coro}\label{coro_star_Nbis}
Let us consider the problem \eqref{mainxSNK} with $L_1=...=L_D=L^+\in\R^*_+$ for every $1 \leq j \leq D$ with $D\in \N^*$ such that $D<N$ and $L_{D+1}=...=L_N=L^-\in\R^*_+$. Assume $(k_1,...,k_N)$ be such that $k_j=k^+>0$ for every $1 \leq j \leq D$ and 
$k_j=k^-<0$ 
for 
every $D+1\leq j\leq N$. If $$\frac{|k^-|}{k^+}\neq \frac{D}{L^+}\frac{L^-}{N-D},$$ then for every $f\in H^{-1}(\Si)$, there exists a unique solution $\psi\in H^1_0(\Si)$ solving \eqref{mainxSNK}.
\end{coro}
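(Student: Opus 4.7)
The plan is to reduce the non-equilateral setting of this corollary to the equilateral case already covered by Theorem~\ref{th_star_N}, by rescaling each edge to a common length and then factoring out the positive multiplicative constant $k^+/L^+$ so that the positive conductivity becomes equal to $1$. This avoids re-running the T-coercivity argument with new operators $T_1,T_2$ adapted to the two lengths, which would be notationally heavier.

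First I would introduce, on each edge $e_j$, the change of variable $y=x/L_j$ and set $\tilde\psi_j(y):=\psi_j(L_jy)$. This produces a linear isomorphism between $H^1_0(\Si)$ and $H^1_0(\tilde\Si)$, where $\tilde\Si$ denotes the equilateral star-graph with $N$ edges of length $1$: the Dirichlet conditions $\psi_j(0)=0$ are transported to $\tilde\psi_j(0)=0$, and the continuity condition at the central vertex $\psi_1(L^+)=\cdots=\psi_D(L^+)=\psi_{D+1}(L^-)=\cdots=\psi_N(L^-)$ becomes $\tilde\psi_1(1)=\cdots=\tilde\psi_N(1)$. Dualising via the $L^2$ pairing identifies $H^{-1}(\Si)$ with $H^{-1}(\tilde\Si)$, so that any source $f$ corresponds to a $\tilde f$.

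A direct scaling computation then yields $\int_0^{L_j}\dd_x\psi_j\,\dd_x\ffi_j\,dx = L_j^{-1}\int_0^1 \dd_y\tilde\psi_j\,\dd_y\tilde\ffi_j\,dy$, so the bilinear form $a_k(\cdot,\cdot)$ becomes
$$\tilde a(\tilde\psi,\tilde\ffi)=\frac{k^+}{L^+}\sum_{j=1}^D\int_0^1\dd_y\tilde\psi_j\,\dd_y\tilde\ffi_j\,dy+\frac{k^-}{L^-}\sum_{j=D+1}^N\int_0^1\dd_y\tilde\psi_j\,\dd_y\tilde\ffi_j\,dy.$$
Dividing through by the positive constant $k^+/L^+$ puts this form exactly into the framework of Theorem~\ref{th_star_N}, with common edge length $L=1$ and effective negative conductivity $\hat k^-:=(k^-L^+)/(k^+L^-)$. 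The hypothesis $|\hat k^-|\neq D/(N-D)$ of that theorem reads, after clearing denominators, $|k^-|/k^+\neq (D/L^+)(L^-/(N-D))$, which is precisely the assumption of the corollary; Theorem~\ref{th_star_N} then yields a unique $\tilde\psi\in H^1_0(\tilde\Si)$, and pulling back by the rescaling produces the unique $\psi\in H^1_0(\Si)$ solving \eqref{mainxSNK}.

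The statement is essentially a rescaling exercise and I do not anticipate a genuine obstacle; the only care required is the bookkeeping of the isomorphisms $H^1_0(\Si)\simeq H^1_0(\tilde\Si)$ and $H^{-1}(\Si)\simeq H^{-1}(\tilde\Si)$ and the compatibility of the duality pairing with the change of variables, so that the right hand side $f$ in \eqref{mainxSNK} is correctly transported to a $\tilde f\in H^{-1}(\tilde\Si)$ and back.
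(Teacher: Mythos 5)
Your proposal is correct and follows essentially the same route as the paper: a change of variables $y=x/L_j$ reducing to an equilateral star, followed by normalization of the positive coefficient $k^+/L^+$, so that Theorem~\ref{th_star_N} applies with effective conductivity $\widehat k^-=k^-L^+/(k^+L^-)$, whose admissibility condition $|\widehat k^-|\neq D/(N-D)$ is exactly the stated hypothesis. The scaling identity $\int_0^{L_j}\dd_x\psi_j\,\dd_x\ffi_j\,dx=L_j^{-1}\int_0^1\dd_y\tilde\psi_j\,\dd_y\tilde\ffi_j\,dy$ and the transport of $f$ are handled consistently with the paper's argument.
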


\begin{proof}
The results follow from Theorem \ref{th_star_N}. First, we use the following change of variable on every edge of the 
graph $$h_j:x\in (0,L_j)\longmapsto y=\frac{x}{L_j}.$$
We substitute the parameters $k^+$ with $\widetilde k^+=\frac{k^+}{L^+}$ and $\widetilde k^-=\frac{k^-}{L^-}$. We also replace $f$ with $\widetilde f=(L_1\widetilde f_1,...,L_N\widetilde f_N)$.
Second, we divide each equation of \eqref{mainxSNK} with respect to $\widetilde k^+$ and we call $\widehat f=\frac{\widetilde f}{\widetilde k^+}$ and $\widehat k^-= \frac{\widetilde k^-}{\widetilde k^+}.$ These two transformations allow us to rewrite the problem \eqref{mainxSNK} in an equivalent one defined on a star graph with edges of equal length. The new problem is still of the form of \eqref{mainxSNK} where the parameters $k_j$ 
are $k_j=1$ for every $1 \leq j\leq D$ and 
$k_j=\widehat k^-$ with $k^-<0$ for 
every $D+1\leq j\leq N$, while $f$ is substitute with $\widehat f$. Finally, the result follows from Theorem \ref{th_star_N}.
\end{proof}

\subsection{General multiphase star-graph networks}

In the following theorem, we ensure well-posedness in the case of equilateral star graphs with non-constant conductivities. The general case where the edges may have different lengths is treated in the subsequent corollary.

\begin{theorem} \label{th_star_N_more}
Let us consider the problem \eqref{mainxSNK} with $L_1=...=L_N=L\in\R$. Assume $(k_1,...,k_N)$ be such that $k_j>0$ for every $1 \leq j \leq D$ with $D\in \N^*$ such that $D<N$ and $k_j<0$ for every $D+1\leq j\leq N$.  If one of the following inequalities is satisfied
$$\Big(\max_{D+1\leq l\leq N} 
S_l\Big)\Bigg(\sum_{l=1}^D\frac{1}{I_l}\Bigg)<\frac{D^2}{N-D},\ \ \ \  \ \ \ \ \ \ \  \ \ \Big(\max_{ l\leq 
D} 
S_l\Big)\Bigg(\sum_{l=D+1}^N\frac{1}{I_l}\Bigg)<\frac{(N-D)^2}{D},$$ then, for every $f\in H^{-1}(\Si)$, there exists a unique solution $\psi\in H^1_0(\Si)$ solving \eqref{mainxSNK}.
\end{theorem}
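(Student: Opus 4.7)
I would replicate the T-coercivity strategy of Theorem~\ref{th_star_N}: reuse the two involutive operators $T_1,T_2$ of that proof, apply $T_1$ under the first hypothesis and $T_2$ under the second, and check that the bilinear form $a_k(\cdot,T_i \cdot)$ is coercive on $H^1_0(\Si)$. Once coercivity is in hand, Lax--Milgram closes the argument exactly as before, since $T_i \circ T_i = \mathrm{Id}$ makes $T_i$ an automorphism of $H^1_0(\Si)$.

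The only novelty is that $k_l(x)$ is no longer constant, so the mixed integral cannot be simplified by pulling the constant $|k^-|$ outside. Expanding $a_k(\psi,T_1\psi)$ yields the two positive pieces $\sum_{j=1}^D \int_0^L k_j|\dd_x\psi_j|^2 dx$ and $\sum_{l=D+1}^N \int_0^L |k_l||\dd_x\psi_l|^2 dx$, together with the cross term $-\tfrac{2}{D}\sum_{l,j}\int_0^L |k_l|\,\dd_x\psi_l \dd_x\psi_j\, dx$. The crucial step is to control this cross term by a \emph{weighted} Cauchy--Schwarz inequality in which the weight $|k_l|$ is kept on one factor and the pointwise bound $|k_l(x)|\leq S_l$ is used only on the other:
\[
\Big|\int_0^L |k_l|\, \dd_x\psi_l\, \dd_x\psi_j\, dx\Big| \leq \sqrt{S_l}\, \Big(\int_0^L |k_l|\,|\dd_x\psi_l|^2 dx\Big)^{1/2} \|\dd_x\psi_j\|_{L^2(0,L)}.
\]
A Young's inequality with a weight $c_j>0$ depending only on $j$ then separates the two factors: the $\int_0^L |k_l||\dd_x\psi_l|^2 dx$ piece is absorbed into the second positive sum, and the $\|\dd_x\psi_j\|^2_{L^2}$ piece is absorbed into the first via $\int_0^L k_j |\dd_x\psi_j|^2 dx \geq I_j \|\dd_x\psi_j\|^2_{L^2}$.

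Setting $S := \max_{D+1 \leq l \leq N} S_l$ and choosing $c_j = \tfrac{D\, I_j}{(N-D)\,S}(1-\e)$ for small $\e>0$, the two resulting coercivity conditions collapse, after elementary algebra, to $(N-D)\,S \sum_{j=1}^D 1/I_j < D^2$, which is exactly the first hypothesis. The second case is handled symmetrically with $T_2$: one puts the weight $k_j$ on the positive edges in the Cauchy--Schwarz step and picks Young weights $c_l$ depending on $l>D$, reaching the second inequality of the theorem.

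The delicate point is precisely the weighted Cauchy--Schwarz: the naive estimate $|\int |k_l|\,\dd_x\psi_l\,\dd_x\psi_j\, dx| \leq S_l \|\dd_x\psi_l\|\|\dd_x\psi_j\|$ would produce a condition involving the ratio $S_l^2/I_l$ rather than $S_l$ alone, strictly stronger than the one stated. Keeping the weight $|k_l|$ on one factor throughout is exactly what makes the estimate sharp and matches the stated hypothesis.
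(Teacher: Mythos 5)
Your proposal is correct and follows essentially the same route as the paper: reuse $T_1,T_2$ from Theorem \ref{th_star_N}, control the cross term by a weighted Cauchy--Schwarz/Young estimate that keeps the factor $|k_l|$ attached to one derivative (the paper does this via a pointwise Young inequality with parameters $\eta_{j,l}$, which after bounding $S_l$ by its maximum gives exactly your choice of weights), and conclude by Lax--Milgram. Your closing observation about why the naive bound would yield a stronger condition involving $S_l^2/I_l$ is accurate and consistent with the paper's computation.
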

\begin{proof}
The statement follows by the T-coercivity developed in Theorem \ref{th_star_N}. We consider again
the bilinear form 
$a_k(\cdot,\cdot)$ in $H^1_0(\Si)\times H^1_0(\Si)$ corresponding to \eqref{mainxSNK} and such that, for every $\psi,\ffi\in 
H^1_0(\Si)$,
$$ a_k(\psi,\ffi):=-\sum_{j=1}^N(\dd_x{\psi_j},k_j \dd_x\ffi_j)_{L^2(0,L)}.$$

\smallskip

\noindent
{\bf 1) First case.} Let us consider the operator $T_1:(\psi_1,...,\psi_N)\in H^1_0(\Si) \rightarrow H^1_0(\Si)$ introduced in the proof of Theorem \ref{th_star_N}. Thanks to 
the young's inequality, for every $\eta_{j,l}>0$ with $l\in\{D+1,...,N\}$ and $ j\in \{1,...,D\}$,
\begin{equation*}\begin{split}
a_k(\psi,T_1\psi)&=\sum_{l=1}^N(\dd_x{\psi_l},|k_l|\dd_x{\psi_l})_{L^2(0,L)}-\frac{2}{D} 
\sum_{l=D+1}^N\sum_{j=1}^D(\dd_x{\psi_l},|k_l|\dd_x{\psi_j})_{L^2(0,L)}
\\    
&\geq \sum_{l=1}^N
\big\|\sqrt { |k_l|} \dd_x { \psi_l}\big\|^2_{ L^2(0,L) } \\
&
-\frac{1}{D} 
\sum_{l=D+1}^N\sum_{j=1}^D\Bigg(\eta_{j,l}\Big\|\sqrt{|k_l|}\dd_x{\psi_l}\Big\|^2_{L^2(0,L)}+\frac{1}{\eta_{j,l}}
\Big\|\sqrt { |k_l|} \dd_x { \psi_j}\Big\|^2_{ L^2(0,L) } \Bigg)
\\ 
&\geq 
\sum_{l=1}^D\int_0^L\Big(|k_l(x)|-\frac{1}{D}\sum_{j=D+1}^N\frac{|k_j(x)|}{\eta_{l,j}}\Big)\big|\dd_x{\psi_l}(x)
|^2dx\\
&+  \sum_{l=D+1}^N \Bigg(1-\frac{1}{D}\sum_{j=1}^D\eta_{j,l}\Bigg)
\Big\|\sqrt{|k_l|} \dd_x { \psi_l}\Big\|^2_{ L^2(0,L) }.\\  
\end{split}\end{equation*}
  Let $S^+:=\underset{D+1\leq l\leq N}{\max}\ S_l$. If $K_1:=S^+ \ds \sum_{l=1}^D\frac{1}{I_l}<\frac{D^2}{N-D}$, then 
we 
set $\eta_{j,l}=\frac{S_l(N-D)}{I_j D (1-\epsilon)}$ with $\epsilon\in 
\big(0,1-\frac{K_1(N-D)}{D^2}\big)$ and,
$$|k_l(x)|-\frac{1}{D}\sum_{j=D+1}^N\frac{|k_j(x)|}{\eta_{l,j}}\geq|k_l(x)|- I_l + I_l\epsilon \geq I_l 
\epsilon>0,\ \ \ \ \ \ \ \  \forall x\in (0,L),\ \   \ \ \ \forall l\leq 
D,$$
$$1-\frac{1}{D}\sum_{j=1}^D\eta_{j,l}
\geq 
1-\frac{S^+(N-D)}{D^2}\sum_{j=1}^D\frac{1}{I_j(1-\epsilon)}>
1-\frac{S^+}{K_1}\sum_{j=1}^D\frac{1}{I_j}=0,\ \ \ \ \  \ \ \ \forall D+1\leq l\leq 
N.$$
The last relations yield that there exists $C>0$ such that $a_k(\psi,T_1\psi)\geq C\|\dd_x\psi\|_{L^2}^2$ for every $\psi \in H^1_0(\Si).$ The well-posedness in this case is ensured as in the proof of Theorem \ref{th_star_N}.

\smallskip

\noindent
{\bf 2) Second case.} Let us consider the operator $T_2:(\psi_1,...,\psi_N)\in H^1_0(\Si) \rightarrow H^1_0(\Si)$ introduced in the proof of Theorem \ref{th_star_N}. Thanks to 
the Young's inequality, for every $\eta_{j,l}>0$ with $l\in\{D+1,...,N\}$ and $ j\in \{1,...,D\}$,
\begin{equation*}\begin{split}
a_k(\psi,T_2\psi)&=\sum_{l=1}^N(\dd_x{\psi_l},|k_l|\dd_x{\psi_l})_{L^2(0,L)}-\frac{2}{N-D} 
\sum_{l=1}^D\sum_{j=D+1}^N(\dd_x{\psi_l},|k_l|\dd_x{\psi_j})_{L^2(0,L)}
\\    
&\geq \sum_{l=1}^N
\big\|\sqrt { |k_l|} \dd_x { \psi_l}\big\|^2_{ L^2(0,L) } \\
&-\frac{1}{N-D} 
\sum_{l=1}^D\sum_{j=D+1}^N\Bigg(\eta_{j,l}\Big\|\sqrt{|k_l|}\dd_x{\psi_l}\Big\|^2_{L^2(0,L)}+\frac{1}{\eta_{j,l}}
\Big\|\sqrt { |k_l|} \dd_x { \psi_j}\Big\|^2_{ L^2(0,L) } \Bigg)
\\ 
&\geq  \sum_{l=1}^D \Bigg(1-\frac{1}{N-D}\sum_{j=D+1}^N\eta_{l,j}\Bigg)
\Big\|\sqrt{|k_l|} \dd_x { \psi_l}\Big\|^2_{ L^2(0,L) } + \\
&\sum_{l=D+1}^N\int_0^L\Big(|k_l(x)|-\frac{1}{N-D}\sum_{j=1}^D\frac{|k_j(x)|}{\eta_{j,l}}\Big)\big|\dd_x{\psi_l}(x)
|^2dx.\\  
\end{split}\end{equation*}
Let $S_+:=\underset{ l\leq D}{\max}\ S_l$. If $K_2:=S_+ \ds \sum_{l=D+1}^N\frac{1}{I_l}<\frac{(N-D)^2}{D}$, then we set 
each
$\eta_{j,l}=\frac{S_jD}{I_l (N-D)(1-\epsilon)}$ with $\epsilon\in 
\Big(0,1-\frac{K_2 D}{(N-D)^2}\Big)$ such that 
$$|k_l(x)|-\frac{1}{N-D}\sum_{j=1}^D\frac{|k_j(x)|}{\eta_{j,l}}\geq|k_l(x)|- I_l + I_l\epsilon \geq I_l 
\epsilon>0,\ \ \ \ \ \ \ \  \forall x\in (0,L),\ \   \ \ \ \forall D+1\leq l\leq N,$$
$$1-\frac{1}{N-D}\sum_{j=D+1}^N\eta_{l,j}
\geq 
1-\frac{S_+D}{(N-D)^2}\sum_{j=D+1}^N\frac{1}{I_j(1-\epsilon)}>
1-\frac{S_+}{K_2}\sum_{j=D+1}^N\frac{1}{I_j}=0,\ \ \ \ \  \ \ \ \forall l\leq 
D.$$
The last relations yield that there exists $C>0$ such that $a_k(\psi,T_2\psi)\geq C\|\dd_x\psi\|_{L^2}^2$ for every  $\psi \in H^1_0(\Si).$ Finally, the proof is concluded as the one of Theorem \ref{th_star_N}.
\end{proof}
\begin{coro} \label{coro_star_N_more}
Let us consider the problem \eqref{mainxSNK}. Assume $(k_1,...,k_N)\in \widehat H^1(\Si)$ be such that $k_j>0$ for every $1 \leq j \leq D$ with $D\in \N^*$ such that $D<N$ and $k_j<0$ for every $D+1\leq j\leq N$.  If one of the following inequalities is satisfied
$$\Bigg(\max_{D+1\leq l\leq N} 
\frac{S_l}{L_l}\Bigg)\Bigg(\sum_{l=1}^D\frac{L_l}{I_l}\Bigg)<\frac{D^2}{N-D},\ \ \ \  \ \ \ \ \ \ \  \ \ 
\Bigg(\max_{ l\leq 
D} 
\frac{S_l}{L_l}\Bigg)\Bigg(\sum_{l=D+1}^N\frac{L_l}{I_l}\Bigg)<\frac{(N-D)^2}{D},$$
then for every $f\in H^{-1}(\Si)$, there exists a unique solution $\psi\in H^1_0(\Si)$ solving \eqref{mainxSNK}.
\end{coro}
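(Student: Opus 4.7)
The plan is to reduce Corollary \ref{coro_star_N_more} to Theorem \ref{th_star_N_more} by the same edge-rescaling trick used to pass from Theorem \ref{th_star_N} to Corollary \ref{coro_star_Nbis}. First I would introduce, on each edge $e_j$, the change of variable $h_j:x\in(0,L_j)\mapsto y=x/L_j\in(0,1)$ and set $\widetilde\psi_j(y):=\psi_j(L_j y)$, $\widetilde k_j(y):=k_j(L_j y)$, $\widetilde f_j(y):=f_j(L_j y)$. Since $h_j$ is bijective and preserves the endpoints $0$ and $L_j$ (mapped to $0$ and $1$), it maps $H^1_0(\Si)$ isomorphically to the analogous space on the equilateral star graph of common edge length $1$, and it respects the transmission condition at the central vertex because $\psi_j(L_j)=\widetilde\psi_j(1)$ for every $j$.

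Next I would translate the equation. Using $\partial_x = L_j^{-1}\partial_y$ and $dx=L_j\,dy$, the equation $-\partial_x\bigl(k_j(x)\partial_x\psi_j(x)\bigr)=f_j(x)$ becomes, after multiplication by $L_j$,
\[
-\partial_y\!\left(\frac{\widetilde k_j(y)}{L_j}\,\partial_y\widetilde\psi_j(y)\right)
\;=\; L_j\,\widetilde f_j(y),\qquad y\in(0,1),
\]
which is exactly \eqref{mainxSNK} on the equilateral star graph with $N$ edges of length $1$, with new conductivities $\widehat k_j(y):=\widetilde k_j(y)/L_j$ and new source $\widehat f_j(y):=L_j \widetilde f_j(y)\in H^{-1}$. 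The signs of the $k_j$ are preserved, and the uniform bounds become
\[
\frac{I_j}{L_j}\;\leq\;|\widehat k_j(y)|\;\leq\;\frac{S_j}{L_j},\qquad y\in(0,1).
\]

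Now I would simply rewrite the assumptions of Theorem \ref{th_star_N_more} in terms of the transformed bounds $\widehat S_l=S_l/L_l$ and $\widehat I_l=I_l/L_l$: the two sufficient conditions of Theorem \ref{th_star_N_more} become
\[
\Bigl(\max_{D+1\leq l\leq N}\tfrac{S_l}{L_l}\Bigr)\sum_{l=1}^{D}\tfrac{L_l}{I_l}<\tfrac{D^2}{N-D},
\qquad
\Bigl(\max_{l\leq D}\tfrac{S_l}{L_l}\Bigr)\sum_{l=D+1}^{N}\tfrac{L_l}{I_l}<\tfrac{(N-D)^2}{D},
\]
which are precisely the hypotheses stated in the corollary. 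Applying Theorem \ref{th_star_N_more} to the rescaled problem yields a unique $\widehat\psi\in H^1_0(\Si)$, and pulling back by $h_j^{-1}$ gives a unique $\psi\in H^1_0(\Si)$ solving the original problem.

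The step requiring the most care is the bookkeeping of the rescaling: one must verify that the isomorphism $\psi\mapsto\widetilde\psi$ is compatible with the continuity condition at the internal vertex and with the $H^{-1}/H^1_0$ duality pairing, so that the existence statement for $\widehat f$ transfers back to an existence statement for the original $f\in H^{-1}(\Si)$. The only other point to check is that the $H^1$-regularity of the $k_j$ (in 1D implying continuity, hence the existence of finite bounds $I_j,S_j$) is inherited after composition with the affine maps $h_j^{-1}$; this is routine.
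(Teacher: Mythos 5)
Your proposal is correct and follows essentially the same route as the paper: the paper's proof of this corollary is precisely the edge-rescaling reduction (the change of variable $h_j$ from the proof of Corollary \ref{coro_star_Nbis}) applied to Theorem \ref{th_star_N_more}, with the transformed conductivities $k_j/L_j$ and bounds $I_j/L_j$, $S_j/L_j$ yielding exactly the stated inequalities. Your version merely spells out the bookkeeping that the paper leaves implicit.
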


\begin{proof}
The statement follows from Theorem \ref{th_star_N_more} thanks to the arguments adopted in the proof of Corollary \ref{coro_star_Nbis}.
\end{proof}

\section{Star-networks with Dirichlet and Neumann boundaries}\label{star_neumann}

In this section, we continue the study of the well-posedness of a system as \eqref{mainxSNK} in the case we do not only consider Dirichlet boundary conditions on the external vertices but also Neumann-type conditions. In this case, we use a different approach from the T-coercivity adopted in the previous section and assume constant conductivities on each edge. 

\smallskip

We denote $N_d^+$ and $N_n^+$ the numbers of dielectric edges equipped with Dirichlet and Neumann boundary conditions, respectively. Equivalently, $N_d^-$ and $N_n^-$ are the negative index edges respectively equipped with Dirichlet and Neumann boundary conditions. In details, consider $(k_1,...,k_N)\in\R^N$ be such that 
\begin{equation*}\begin{split}
    \begin{cases}
       k_j>0,\ \ \ \  &\text{for}\ \ \ 1\leq j\leq N_d^++N_n^+,\\
       k_j<0,\ \ \ \  &\text{for}\ \ \ N_d^++N_n^++1\leq j\leq N=N_d^++N_n^++N_d^-+N_n^-.\\
    \end{cases}
\end{split}
\end{equation*}
If we refer to the notation of the previous section $D=N_d^++N_n^+$ and $N-D=N_d^-+N_n^-.$ We also recall that $$\widehat H^1(\Si):= \ds \prod_{j=1}^N
H^1(e_j,\R).$$

\smallskip

Assume $N_d^++N_d^-\geq 1$ (we refer to Remark \ref{homo} for the case $N_d^++N_d^-=0$). For $f=(f_1,...,f_N)\in 
H^{-1}(\Si)$, we study the existence and the unicity of a solution $\psi=(\psi_1,...,\psi_N)\in \widehat H^1(\Si)$ verifying in a weak sense the following system
\begin{equation}\label{mainxSNK_Neumann}\begin{split}\begin{cases}
-k_j \dd_x^2\psi_j(x)= f_j(x), \ \ \ \ \ \ \ \ &\ x\in (0,L_j),\ 1 \leq j\leq N,\\
\psi_j(0)=0,  \ \ \ \ \ \ \ \  \ &\  1 \leq j\leq N_d^+\ \ \ \text{and}\ \ \ N_d^++N_n^++1 \leq j\leq N_d^++N_n^++N_d^-,\\
\dd_x\psi_j(0)=0, \ \ \ \ \ \ \ \  \ &\  N_d^++1 \leq j\leq N_d^++N_n^+\ \ \ \text{and}\ \ \ N_d^++N_n^++N_d^-+1 \leq j\leq N  ,\\
\psi_l(L_l)=\psi_k(L_k), \ \ \ \ \ \ \ \  \ &\  1\leq l,k\leq N ,\\
\sum_{l=1}^{N}k_l\dd_x\psi_l(L_l)=0.  \\
\end{cases}
\end{split}
\end{equation}

\begin{theorem}\label{th_star_Neuman_constant}
Let us consider the problem \eqref{mainxSNK_Neumann} with $L_1=...=L_N=1$. Assume $(k_1,...,k_N)$ be such that  $k_j=k^+>0$ for $1 \leq j\leq N_d^+$ and 
$k_j=k^-<0$ with $k>0$ for $N_d^++N_n^++1\leq j\leq N_d^++N_n^++N_d^-$. If $$\frac{|k^-|}{k^+} \neq  \frac{N_d^+}{N_d^- },$$ then for every $f\in H^{-1}(\Si)$, there exists a unique solution $\psi\in \widehat H^1(\Si)$ solving \eqref{mainxSNK_Neumann}.
\end{theorem}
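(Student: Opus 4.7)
The proof plan follows the introduction's second strategy: move away from T-coercivity and instead reduce the PDE system to a linear transmission equation at the internal vertex. The advantage here is that the conductivities are piecewise constant and the edges are of equal length, so the ODE on each edge can be integrated explicitly, and the only true unknown is the common value at the internal vertex.

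First I would fix $f \in H^{-1}(\Sigma)$ and study the problem edge by edge. On each edge $e_j$ the equation $-k_j \partial_x^2 \psi_j = f_j$ is a second order ODE, so after imposing the boundary condition at $x=0$ (Dirichlet $\psi_j(0)=0$ for the edges in $\{1,\dots,N_d^+\} \cup \{N_d^+{+}N_n^+{+}1,\dots,N_d^+{+}N_n^+{+}N_d^-\}$, and Neumann $\partial_x\psi_j(0)=0$ on the remaining edges) together with the common value $\psi_j(L_j)=\psi_j(1)=c$ at the internal vertex, the function $\psi_j$ is uniquely determined. Introducing the data $F_j := \int_0^1\!\int_0^t f_j(s)\,ds\,dt$ and $G_j := \int_0^1 f_j(s)\,ds$, an elementary integration gives
\begin{equation*}
k_j\,\partial_x \psi_j(1) \;=\;
\begin{cases}
k_j\, c + F_j - G_j, & \text{if } e_j \text{ carries a Dirichlet condition at } 0, \\[2pt]
-G_j, & \text{if } e_j \text{ carries a Neumann condition at } 0.
\end{cases}
\end{equation*}
In particular the Neumann edges contribute nothing to the coefficient of $c$; this is the reason why only $N_d^+$ and $N_d^-$ appear in the hypothesis.

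Next I would insert these expressions into the Kirchhoff-type transmission condition $\sum_{l=1}^N k_l\, \partial_x \psi_l(1) = 0$ (which is built into the weak formulation on $\widehat H^1(\Sigma)$, together with continuity at the internal vertex). Using that $k_j=k^+$ on the $N_d^+$ Dirichlet positive edges and $k_j = k^-$ on the $N_d^-$ Dirichlet negative ones, the condition collapses to the single scalar equation
\begin{equation*}
\bigl(N_d^+ k^+ + N_d^- k^-\bigr)\, c \;=\; \sum_{\text{Neu}} G_j \;+\; \sum_{\text{Dir}} (G_j - F_j),
\end{equation*}
in which the right-hand side is a continuous linear functional of $f \in H^{-1}(\Sigma)$. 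The coefficient $N_d^+ k^+ + N_d^- k^- = N_d^+ k^+ - N_d^- |k^-|$ vanishes exactly when $|k^-|/k^+ = N_d^+/N_d^-$, so under the hypothesis of the theorem $c$ is uniquely determined by $f$, and then each $\psi_j$ is uniquely reconstructed from $c$ by the formulas above.

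The only delicate points I foresee are book-keeping rather than conceptual: one must verify that the function $\psi=(\psi_1,\dots,\psi_N)$ produced edge by edge indeed belongs to $\widehat H^1(\Sigma)$, that it satisfies continuity at the internal vertex and all prescribed boundary conditions at the external ones, and that it is a weak solution of \eqref{mainxSNK_Neumann} in the duality between $\widehat H^1(\Sigma)$ and $H^{-1}(\Sigma)$. For Dirichlet data $f \in L^2(\Sigma)$ these properties are immediate from the explicit integrations, and the general $H^{-1}$ case is obtained by density, using the continuous dependence of $c$ and of the edge solutions on $f$. Uniqueness is then automatic: if $f=0$ then all $F_j$ and $G_j$ vanish, forcing $c=0$ and hence $\psi=0$. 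This closes the argument under the assumption $|k^-|/k^+ \neq N_d^+/N_d^-$.
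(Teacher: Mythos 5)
Your proposal is correct and follows essentially the same route as the paper: integrate the ODE on each edge explicitly, impose the external boundary conditions, and reduce the transmission conditions at the internal vertex to a solvability condition whose coefficient is $N_d^+ k^+ + N_d^- k^-$. The only cosmetic difference is that you eliminate the continuity conditions immediately and obtain a single scalar equation for the common vertex value $c$, whereas the paper keeps the full linear system $M\cdot v + F=0$ and computes $\det(M)$; the invertibility criterion is identical.
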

\begin{proof}

\noindent
{\bf Unicity of the solutions: } We show that, if $\psi$ solves \eqref{mainxSNK_Neumann} with $f=(0,...,0)$, then $\psi=(0,...,0)$. Indeed, by integrating the equation, we obtain
\begin{align*}
 \dd_x\psi_j(x)= \dd_x\psi_j(0), \ \ \ \ \ \ \ \ &\ x\in (0,L_j),\ 1 \leq j\leq N.
\end{align*}
The boundary conditions yields 
\begin{equation}\label{eqq1}\begin{split}
 \dd_x\psi_j(x)= \dd_x\psi_j(0), \ \ \ \ \ \ \ \ &x\in (0,L_j),\ 1 \leq j\leq N_d^+\ \ \ \text{and}\ \ \ N_d^++N_n^++1 \leq j\leq N_d^++N_n^++N_d^-,\\
 \dd_x\psi_j(x)=0, \ \ \ \ \ \ \ \ &x\in (0,L_j),\ N_d^++1 \leq j\leq N_d^++N_n^+\ \ \ \text{and}\ \ \ N_d^++N_n^++N_d^-+1 \leq j\leq N ,
\end{split}\end{equation}
and then
\begin{equation}\label{eqq2}\begin{split}
 \dd_x\psi_j(L_j)= \dd_x\psi_j(0), \ \ \ \ \ \ \ \ &\ 1 \leq j\leq N_d^+\ \ \ \text{and}\ \ \ N_d^++N_n^++1 \leq j\leq N_d^++N_n^++N_d^-,\\
 \dd_x\psi_j(L_j)=0, \ \ \ \ \ \ \ \ &\ N_d^++1 \leq j\leq N_d^++N_n^+\ \ \ \text{and}\ \ \ N_d^++N_n^++N_d^-+1 \leq j\leq N .
\end{split}\end{equation}
The last relations show that on the edges equipped with boundary Neumann boundary conditions, $\psi$ can only be a constant function. We integrate again the equations \eqref{eqq1} and we obtain
\begin{equation*}\begin{split}
 \psi_j(x)=  x\dd_x\psi_j(0)+  \psi_j(0), \ \ \ \ \ \ \ \ &x\in (0,L_j),\ 1 \leq j\leq N_d^+\ \ \ \text{and}\ \ \ N_d^++N_n^++1 \leq j\leq N_d^++N_n^++N_d^-,\\
 \psi_j(x)= \psi_j(0), \ \ \ \ \ \ \ \ &x\in (0,L_j),\ N_d^++1 \leq j\leq N_d^++N_n^+\ \ \ \text{and}\ \ \ N_d^++N_n^++N_d^-+1 \leq j\leq N,
\end{split}
\end{equation*}
and thanks to the validity of the boundary conditions
\begin{equation*}\begin{split}
 \psi_j(x)=  x\dd_x\psi_j(0) , \ \ \ \ \ \ \ \ &x\in (0,L_j),\ 1 \leq j\leq N_d^+\ \ \ \text{and}\ \ \ N_d^++N_n^++1 \leq j\leq N_d^++N_n^++N_d^-,\\
  \psi_j(x)=\psi_j(0), \ \ \ \ \ \ \ \ &x\in (0,L_j),\ N_d^++1 \leq j\leq N_d^++N_n^+\ \ \ \text{and}\ \ \ N_d^++N_n^++N_d^-+1 \leq j\leq N .
\end{split}\end{equation*}
From the last relation and \eqref{eqq2}, we obtain
\begin{equation*} \begin{split}
 \frac{1}{L_j}\psi_j(L_j)= \dd_x\psi_j(0)= \dd_x\psi_j(L_j) , \ \ \ \ \ \ \ \ &\ 1 \leq j\leq N_d^+\ \ \ \text{and}\ \ \ N_d^++N_n^++1 \leq j\leq N_d^++N_n^++N_d^-,\\
\dd_x\psi_j(L_j) =0, \ \ \ \ \ \ \ \ &\ N_d^++1 \leq j\leq N_d^++N_n^+\ \ \ \text{and}\ \ \ N_d^++N_n^++N_d^-+1 \leq j\leq N ,
\end{split}\end{equation*}
We use the last identities in the boundary conditions in the internal vertex appearing in the problem \eqref{mainxSNK_Neumann} and we obtain the following linear system of equation
\begin{equation}\label{boundary}\begin{split}\begin{cases}
\psi_l(L_l)=\psi_k(L_k), \ \ \ \ \ \ \ \  \ &\  1\leq l,k\leq N ,\\
\sum_{l=1}^{N_d^+} \frac{k_l}{L_j} \psi_j(L_j)+\sum_{l=N_d^++N_n^++1}^{N_d^++N_n^++N_d^-}\frac{k_l }{L_j} \psi_j(L_j)=0.  
\end{cases}
\end{split}
\end{equation}
We write \eqref{boundary} in a matrix form as $M\cdot v=0$ where $ v=(\psi_1(L_1),...,\psi_N(L_N))$ and 
\begin{eqnarray}\label{M}
M=\left(\begin{array}{*{12}c}
1 & - 1 & 0 & ... & ...& ... &  ... & ... & ...& ...& ...& ... \\
0 & 1 & - 1 & 0 & ... & ... & ... & ... & ...& ...& ...&  ... \\
... &  ... &  ...  & ... & ...  & ...& ...& ...& ...& ...  & ... & ... \\
... &  ... &  ...  & ... & ...  & ...& ...& ...& ...& 0  & -1 & 1 \\
 \frac{k_1}{L_1} & ... &  \frac{k_{N_d^+}}{L_{N_d^+}} &  0 & ... & 0 & \frac{k_{N_d^++N_n^++1}}{L_{N_d^++N_n^++1}} & ... & \frac{k_{N_d^++N_n^++1}}{L_{N_d^++N_n^++1}}&  0 & ... & 0 \end{array}\right).  \end{eqnarray}
Notice that $det(M)=\sum_{l=1}^{N_d^+} \frac{k_l}{L_j} +\sum_{l=N_d^++N_n^+}^{N_d^++N_n^++N_d^-}\frac{k_l }{L_j}$ and $M$ is invertible when
$$\sum_{l=1}^{N_d^+} \frac{k_l}{L_j}  +\sum_{l=N_d^++N_n^+}^{N_d^++N_n^++N_d^-}\frac{k_l }{L_j}\neq 0.$$
We recall the assumptions on the parameters $k_j$ and the lengths $L_j$ so that the last identity becomes
$$N_d^+ k^+ + N_d^- k^-\neq 0.$$
Finally, we have $\psi=(0,...,0)$ when $\frac{-k^-}{k^+} \neq  \frac{N_d^+}{N_d^- }$, which implies the unicity of solutions.

\smallskip

\noindent
{\bf Existence of the solutions: } We solve the problem \eqref{mainxSNK_Neumann} by integrating the equation twice such as in the previous step. Here, we consider a general source term $f$ and the boundary conditions verified in the internal vertex of $\Si$ correspond to a linear system that can be written in a matrix form as follows
\begin{align}\label{eqqqq}M v + F = 0.\end{align}
The matrix $M$ is the same introduced in \eqref{M}, while $F=(0,...,0, F_N)$ is defined by
\begin{align*}
F_N=&\sum_{j=1}^{N_d^+}\int_0^1  \Big(f_j(t) -\int_0^t  f_j(s)ds \Big)dt+\sum_{j=N_d^++1}^{N_n^+}\int_0^1  f_j(t)dt \\
&+\sum_{j=N_d^++N_n^++1}^{N_d^++N_n^++N_d^-}\int_0^1  \Big(f_j(t) -\int_0^t  f_j(s)ds \Big)dt+\sum_{j=N_d^++N_n^++N_d^-+1}^{N}\int_0^1  f_j(t)dt .
\end{align*}
The equation \eqref{eqqqq} is solvable when $M$ is invertible which is guaranteed when $\frac{-k^-}{k^+} \neq  \frac{N_d^+}{N_d^- }.$

\end{proof}

\begin{remark}\label{homo}
    Notice that when only Dirichlet boundary conditions are verified, we have $N_n^+=N_n^-=0$ and the last theorem provides the same results of Corollary \ref{coro_star_Nbis} since $N_d^+=D$ and $N_d^-=N-D$.
    On the other hand, when the graph is equipped with only Neumann boundary conditions and $N_d^++N_d^-=0$, the well-posedness is guaranteed independently of the choice of the parameters $k_j$. However, the result is only valid for source terms $f$ such that $\int_{\Si} f(x)dx=0$.  

\end{remark}

\begin{remark}\label{2edges_neuman}
A simple but still interesting consequence of Theorem \ref{th_star_Neuman_constant} is the well-posedness in the case of an interval. Let $a \in (0,1)$ and $k_1,k_2\in\R^*$. We consider the problem in $H^1(0,a)\times H^1(a,1)$
\begin{equation*}\begin{split}\begin{cases}
-k_1 \dd_x^2\psi(x)= f (x), \ \ \ \ \ \ \ \ &\ x\in (0,a),\\
-k_2 \dd_x^2\psi(x)= f(x), \ \ \ \ \ \ \ \ &\ x\in (a,1),\\
\end{cases}
\end{split}
\end{equation*}
 with $f\in  H^{-1}(0,a)\times H^{-1}(a,1)$ equipped with the boundary condition at the internal point
\begin{equation*}\begin{split} 
\psi(a^-)=\psi(a^+), \ \ \ \ \ \  \ k_1\dd_x\psi_l(a^-)=k_2\dd_x\psi (a^+),  \\
\end{split}\end{equation*}
and at the external points
\begin{equation*} 
\psi(0)=\dd_x\psi(1)=0\  \ \ \ \ \ \ \ \  \text{or}\ \ \ \ \ \ \  \ \  \dd_x\psi(0)=\psi(1)=0.\\
\end{equation*}
The existence and the unicity of the solution $\psi$ of the problem is guaranteed independently of the constants $k_1$ and $k_2$ (also with different signs). Notice that the well-posedness is ensured only for $k_1/k_2 \neq -1$ when we study the same problem with only Dirichlet boundary conditions on the external points. Here, the presence of one Neumann boundary condition yields the result for every $k$ also negative. The same property is also true in the case only Neumann boundary conditions are verified when we choose a source term $f$ such that $\int_0^1f(x)dx=0.$
\end{remark}

\begin{coro} \label{coro_star_Neuman}
Let us consider the problem \eqref{mainxSNK_Neumann}. Assume $(k_1,...,k_N)$ be such that  
$$\sum_{l=1}^{N_d^+} \frac{k_l}{L_j}  +\sum_{l=N_d^++N_n^+}^{N_d^++N_n^++N_d^-}\frac{k_l }{L_j}\neq 0.$$
 For every $f\in H^{-1}(\Si)$, there exists a unique solution $\psi\in \widehat H^1(\Si)$ solving \eqref{mainxSNK_Neumann}.
\end{coro}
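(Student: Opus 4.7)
The plan is to argue that the proof of Theorem \ref{th_star_Neuman_constant} adapts almost verbatim to the general setting, without the constraints that all positive conductivities share a common value $k^+$, all negative conductivities share a common value $k^-$, and all lengths equal $1$. The key observation is that nowhere in that argument do we use these restrictions in an essential way: the matrix $M$ displayed in \eqref{M} is already written with generic entries $k_j/L_j$, and the determinant computation carried out at the end of Theorem \ref{th_star_Neuman_constant} evaluates $\det(M)$ precisely as the sum appearing in the hypothesis of the corollary.

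In detail, I proceed as in the proof of Theorem \ref{th_star_Neuman_constant}. On each edge, I integrate the ODE $-k_j \partial_x^2 \psi_j = f_j$ twice; the external boundary condition (Dirichlet for $1 \leq j \leq N_d^+$ and $N_d^+ + N_n^+ + 1 \leq j \leq N_d^+ + N_n^+ + N_d^-$, Neumann on the remaining indices) fixes one of the two integration constants, so $\psi_j$ becomes an affine function of $x$ with a single remaining scalar unknown (conveniently taken to be $\psi_j(L_j)$) plus a particular solution depending linearly on $f_j$. As an optional preliminary step, one could perform the rescaling $x \mapsto x/L_j$ used in the proof of Corollary \ref{coro_star_Nbis} to reduce to unit-length edges with effective conductivities $k_j/L_j$; this streamlines the bookkeeping but does not collapse the multiphase situation onto two values.

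Imposing the continuity conditions $\psi_l(L_l) = \psi_k(L_k)$ and the Kirchhoff condition $\sum_l k_l \partial_x \psi_l(L_l) = 0$ at the internal vertex $v$ yields a linear system $M v + F = 0$ for $v = (\psi_1(L_1), \dots, \psi_N(L_N))$, where $M$ is exactly the matrix in \eqref{M}: its first $N - 1$ rows enforce continuity, and its last row carries entry $k_j/L_j$ on each Dirichlet column (regardless of the sign of $k_j$) and $0$ on each Neumann column, the vanishing of the Neumann contributions coming from the fact that on a Neumann edge the derivative at the internal vertex does not depend on $\psi_j(L_j)$. Expanding $\det(M)$ as in the proof of Theorem \ref{th_star_Neuman_constant} yields
$$
\det(M) \;=\; \sum_{l=1}^{N_d^+} \frac{k_l}{L_l} \;+\; \sum_{l = N_d^+ + N_n^+ + 1}^{N_d^+ + N_n^+ + N_d^-} \frac{k_l}{L_l},
$$
which is nonzero by hypothesis. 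Thus $M$ is invertible, and since $F$ depends continuously and linearly on $f \in H^{-1}(\Si)$, both existence and uniqueness of $\psi \in \widehat{H}^1(\Si)$ follow.

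The main obstacle I anticipate is purely combinatorial: carefully tracking the four index ranges associated to $N_d^+, N_n^+, N_d^-, N_n^-$ through the integration and the assembly of $M$ and $F$, and verifying that the Neumann edges contribute exactly $0$ to the Kirchhoff row while Dirichlet edges of either sign contribute $k_l/L_l$. No new analytic ingredient beyond those already present in Theorem \ref{th_star_Neuman_constant} is needed.
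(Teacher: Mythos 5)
Your proposal is correct and follows essentially the same route as the paper, which proves the corollary simply by observing that the proof of Theorem \ref{th_star_Neuman_constant} already constructs the matrix $M$ with generic entries $k_j/L_j$ and reduces well-posedness to the invertibility condition $\det(M)\neq 0$. Your extra bookkeeping (integration on each edge, identification of the Kirchhoff row, continuity/linearity of $F$ in $f$) just makes explicit what the paper leaves implicit.
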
 
\begin{proof}
The result is a direct consequence of the proof of Theorem \ref{th_star_Neuman_constant}.
\end{proof}

\section{Tadpole-shaped networks}\label{section_general_graphs}

\subsection{A simple case: two-phase network}\label{simple}
Consider a tadpole graph $\Ti$ composed of two edges 
$e_1$ and $e_2$ of lengths $L_1$ 
and $L_2$, respectively. The self-closing edge $e_1$, the \virgolette{head} of the tadpole, is connected to 
another edge $e_2$, the tail, 
in the vertex $v$. The graph $\Ti$ has a symmetry axis denoted by $r$ which goes through $e_2$ and crosses 
$e_1$ in $v$ and the center of $e_1$. The edge $e_1$ is parameterized in the clockwise direction with a 
coordinate going from $0$ to $L_1$ (the 
length of $e_1$). The edge $e_2$ is a half-line equipped with a coordinate starting from $0$ and 
arriving at $L_2$ in $v$\ (see Figure \ref{tadpole} for 
further details).
\begin{figure}[H]
	\centering
	\includegraphics[width=\textwidth-100pt, height=70pt]{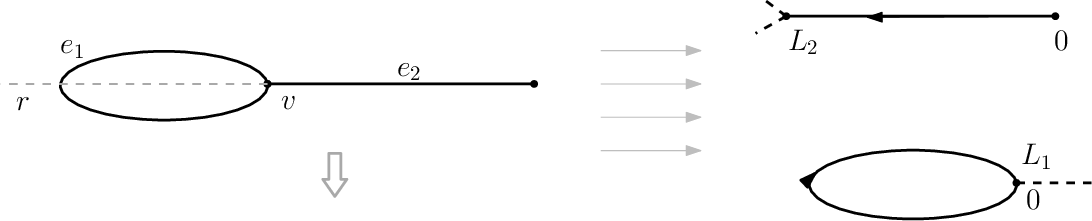}
	\caption{The parametrization of the tadpole graph and its symmetry axis $r$.}
    \label{tadpole}
\end{figure}

\noindent
We consider $\Ti$ as domain of functions $f:=(f_1,f_2):\Ti\rightarrow \R$, such that $f_j:e_j\rightarrow \R$ with 
$j=1,2$. Let $ L^2(\Ti)=L^2((0,L_1),\R)\times L^2((0,L_2),\R)$ be the Hilbert space equipped with the norm $\|\cdot\|$ induced by the scalar 
product
$$(\psi,\ffi)_{L^2}:=\int_{e_1}{\psi_1}(x)\ffi_1(x)dx+\int_{e_2}{\psi_2}
(x)\ffi_2(x)dx,\ \ \ \  \ \ \forall \psi,\ffi\in L^2(\Ti).$$

Let $\widehat H^1(\Ti):= H^1(e_1,\R)\times H^1(e_2,\R) $. By duality according to $L^2(\Ti,\R)$ scalar product, we define the space $H^{-1}(\Ti)$. We consider $k_1,k_2\in\R^*$ and investigate the well-posedness of the following 
stationary problem
\begin{equation}\label{mainxT}\begin{split}
-k_j\dd_x^2\psi_j(x)= f_j(x),\ \ \ \ \ \ \ \ &x\in (0,L_j),\ j\in\{1,2\},\\
\end{split}
\end{equation}
when the following boundary conditions are verified 
\begin{align*}
&\psi_2(0)=0,\ \ \ \  \ \ \ \ \ \ \ \ \psi_1(0)=\psi_1(L_1)=\psi_2(L_2),\\
&k_1 \dd_x\psi_1(L_1)-k_1 \dd_x\psi_1(0)+k_1 \dd_x\psi_1(L_1)=0.\end{align*}

\begin{theorem} \label{th_tadpole}
Let us consider the problem \eqref{mainxT} with $f\in H^{-1}(\Ti,\R)$. Assume $(k_1,k_2)$ be such that $k_1$ and $k_2$ have with different signs. There exists a unique solution $\psi\in \widehat H^1(\Ti)$ solving \eqref{mainxT}.
\end{theorem}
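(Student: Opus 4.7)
The strategy is to mimic the proof of Theorem \ref{th_star_Neuman_constant}: reduce the PDE system to an explicit algebraic one by integration. For each $j\in\{1,2\}$, integrating $-k_j\dd_x^2\psi_j=f_j$ twice yields
\[\psi_j(x) \;=\; d_j + c_j\,x - \frac{1}{k_j}\int_0^x\!\int_0^t f_j(s)\,ds\,dt,\]
where $d_j=\psi_j(0)$ and $c_j=\dd_x\psi_j(0)$ are the four unknown integration constants. The four conditions $\psi_2(0)=0$, $\psi_1(0)=\psi_1(L_1)$, $\psi_1(L_1)=\psi_2(L_2)$, and the Kirchhoff identity at the internal vertex $v$ then translate into a $4\times 4$ linear system $M\,(d_1,c_1,d_2,c_2)^T = F$, where $F$ depends linearly on the source $f$.

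For uniqueness I would set $f=0$, so each $\psi_j$ is affine, $\psi_j(x)=d_j+c_jx$. The external Dirichlet condition immediately gives $d_2=0$. The loop continuity $\psi_1(0)=\psi_1(L_1)$ reads $L_1 c_1=0$, hence $c_1=0$. Because $\psi_j$ is affine and the two occurrences of $\dd_x\psi_1$ cancel, the Kirchhoff identity collapses to $k_2 c_2=0$, and since $k_2\neq 0$ one obtains $c_2=0$. Finally $\psi_1(L_1)=\psi_2(L_2)$ forces $d_1=0$, so $\psi\equiv 0$.

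For existence, I would solve the inhomogeneous $4\times 4$ system explicitly by the same triangular procedure: $d_2$ from the Dirichlet condition, then $c_1$ from the loop continuity (involving $\int_0^{L_1}\!\int_0^t f_1$ divided by $k_1 L_1$), then $c_2$ from the Kirchhoff identity (involving the primitives of $f_1,f_2$ divided by $k_2$), and finally $d_1$ from the remaining transmission equation. Each step requires only that the relevant $k_j$ be nonzero, and the resulting $\psi$ lies in $\widehat H^1(\Ti)$ and depends continuously on $f$.

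The main comment I would add is that the opposite-sign hypothesis is not actually consumed by this algebraic argument: a direct computation shows that $\det(M)$ factors as a nonzero multiple of $k_1 L_1\cdot k_2$, so the system is invertible for every $k_1,k_2\in\R^*$. The assumption that $k_1$ and $k_2$ have opposite signs merely isolates the genuinely new case (the same-sign case being a direct consequence of Lax-Milgram) and highlights that, in sharp contrast with the star-graph resonance \eqref{interdit}, the self-loop of a tadpole provides enough internal flexibility to guarantee well-posedness without any constraint tying $k_1$ and $k_2$ together. The only tiny obstacle here is bookkeeping the correct sign conventions in the Kirchhoff law (three half-edges meet at $v$, with two of them corresponding to $e_1$), but once set up the triangular structure of $M$ makes the rest essentially automatic.
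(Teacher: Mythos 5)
Your proof is correct, but it takes a genuinely different route from the paper's. The paper exploits the mirror symmetry of the tadpole: decomposing $\psi=\psi^a+\psi^s$ and $f=f^a+f^s$ with respect to the reflection across the axis $r$, the antisymmetric part reduces to a Dirichlet problem on half of the loop involving only $k_1$ (always well-posed), while the symmetric part reduces to a two-edge star with one Dirichlet and one Neumann endpoint, whose well-posedness for arbitrary $k_1,k_2\in\R^*$ is supplied by Theorem \ref{th_star_Neuman_constant} and Remark \ref{2edges_neuman}. You instead integrate the ODEs and solve the resulting $4\times 4$ transmission system directly, which is precisely the method the paper uses for Theorem \ref{th_star_Neuman_constant} and for the three-edge tadpole of Theorem \ref{th_tadpole_weird}; your triangular solve (first $d_2$, then $c_1$ from the loop continuity, then $c_2$ from the Kirchhoff law where the two $k_1\dd_x\psi_1$ contributions cancel, then $d_1$) is correct, and the pivots are $1$, $L_1$ (or $k_1L_1$, depending on row normalization), $k_2$ and $1$, so invertibility indeed holds for every $k_1,k_2\in\R^*$. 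What your approach buys: it is self-contained (no appeal to the earlier mixed-boundary theorem), it works verbatim for arbitrary $L_1,L_2$ (the paper fixes $L_1=2$, $L_2=1$ and asserts the general case), and it makes transparent why no resonance condition appears for a loop carrying a single conductivity — exactly your observation that the opposite-sign hypothesis is never consumed, which is consistent with the paper (its proof is also sign-independent) and with Theorem \ref{th_tadpole_weird}, where a genuine resonance condition reappears once the loop is split into two phases. What the symmetry decomposition buys is structural insight: odd modes see only the loop, even modes see an interval with a Neumann end, and this is reused in Section \ref{spec_tad} to build the eigenfunctions. Two bookkeeping remarks: the Kirchhoff condition printed below \eqref{mainxT} contains a typo (its last term should be $k_2\dd_x\psi_2(L_2)$, as in the operator domain of Section \ref{spec_tad}), and your reading is the intended one; and for $f$ merely in $H^{-1}(\Ti)$ the repeated primitives and the endpoint values of $k_j\dd_x\psi_j$ must be interpreted weakly, but this is the same convention the paper itself adopts in the proof of Theorem \ref{th_star_Neuman_constant}.
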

\begin{proof}
Let us assume $L_1=2$ and $L_2=1$. However, the same proof is also valid in the general case. We denote by $R$ the reflection w.r.t. the symmetry axis $r$ as the operator $R$  acting on $L^2(\Ti)$ such that $Rf=(f_1(1-x),f_2)$ for every $f=(f_1, f_2)\in L^2(\Ti)$. The problem \eqref{mainxT} is equivalent to study 
in the same space
\begin{equation}\label{decomposition}\begin{split}\begin{cases}
-k_1 \dd_x^2\psi_1^a(x)-k_1(x)\dd_x^2\psi_1^s(x)= f_1^a(x)+f_1^s(x),\ \ \ \ \ \ \ \ &x\in (0,2),\\
-k_2 \dd_x^2\psi_2^a(x)-k_2(x)\dd_x^2\psi_2^s(x)= f_2^a(x)+f_2^s(x),\ \ \ \ \ \ \ \ &x\in (0,1),\\
\end{cases}\end{split}
\end{equation}
with $$\psi^a=(\psi^a_1,\psi^a_2)=\frac{\psi-R\psi}{2},\ \ \ 
\psi^s=(\psi^s_1,\psi^s_2)=\frac{\psi+R\psi}{2},$$
$$f^a=(f^a_1,f^a_2)=\frac{f-Rf}{2},\ \ \ \ 
f^s=(f^s_1,f^s_2)=\frac{f+Rf}{2}.$$ We notice that $\psi^a$ and $f^a$ are antisymmetric w.r.t. the 
axis $r$ and 
they vanish on $e_2$, while $\psi^s$ and $f^s$ are symmetric. This new formulation yields that 
studying the 
problem \eqref{mainxT} can be done by considering $\psi$ and $f$ antisymmetric at first, and after 
symmetric.

\smallskip

 First, the problem \eqref{mainxT} with $\psi\in \widehat H^1(\Ti)$ antisymmetric is equivalent to the following problem $f\in H^{-1}((0,1),\R)$
\begin{equation*}\begin{split}\begin{cases}
-k_1\dd_x^2\psi_1(x)= f_1(x),\ \ \ \ \ \ \ \ \  \ & x\in (0,1),\\
\psi(0)=\psi(1)=0.
\end{cases}
\end{split}
\end{equation*}
Indeed, $\psi^a$ and $f^a$ are equal to zero on $e_2$ thanks to the definition of $R$. The last problem is clearly well-posed independently of the choice of $k_1$ (and obviously of $k_2$).

\smallskip

Second, we consider the trivial star-graph $\Si$ composed of two connected edges of length $1$.  The problem
\eqref{mainxSNK} with $\psi\in \widehat H^1(\Ti)$ symmetric w.r.t. the axis $r$ is equivalent to studying the following one in $
 H^1(0,1)\times H^1(0,1)$
\begin{equation*}\begin{split}\begin{cases}
-k_1\dd_x^2\psi_1(x)= f_1(x), & x\in (0,1),\\
-k_2\dd_x^2\psi_2(x)= f_2(x), & x\in (0,1),\\
\end{cases}\end{split}
\end{equation*}
equipped with the boundary conditions 
$$\dd_x\psi_1(0)=\psi_2(0)=0,\ \ \ \ \ \  \ \ \ \psi_1(L_1)=\psi(L_2),\ \ \ \ \ \  \ \ \ 2 k_1 \dd_x\psi_1(L_1)+k_2 \dd_x\psi_2(L_2)=0.$$
However, if we denote $\tilde k_1 = 2 k_1$ and  $\tilde f_1= 2 f_1$, then we obtain \eqref{mainxSNK_Neumann}. We refer to Theorem \ref{th_star_Neuman_constant} and,  as explained in Remark \ref{2edges_neuman}, the existence and the unicity of solutions hold independently of the choice of $k_1$ and $k_2$. 
\end{proof}

\begin{remark}
The well-posedness of \eqref{mainxT} when the Neumann boundary condition is verified on the external vertex is also ensured independently of the choice of the parameters $k_j$ when we consider the source term $f$ such that $\int_{\Ti}f(x)dx=0$ thanks to Remark \ref{homo}.  
\end{remark}

\subsection{A more complex case: tree-phase tadpole networks}
Let us parametrize a tadpole graph $\Ti$ differently from the previous chapter (see Figure \ref{tadpolebis}). We use three edges 
$\widetilde e_1$, $\widetilde e_2$ and $\widetilde e_3$ so that $|\widetilde e_1|=\widetilde L_1$, $|\widetilde e_2|=\widetilde L_2$ and $|\widetilde e_3|=\widetilde L_3$, and we denote it $\widetilde \Ti$. We consider the \virgolette{head} of the tadpole ($e_1$ in the previous subsection) as composed of two edges $\widetilde e_1$ and $\widetilde e_2$. The two edges are connected on the left in a vertex $\widetilde v$ parametrized with the coordinate $0$ while they are both connected on the right with the edge $\widetilde e_3$ at the vertex $v$. The coordinate $0$ of the edge $\widetilde e_3$ is located at the external point.

\begin{figure}[H]
	\centering
	\includegraphics[width=\textwidth-100pt, height=70pt]{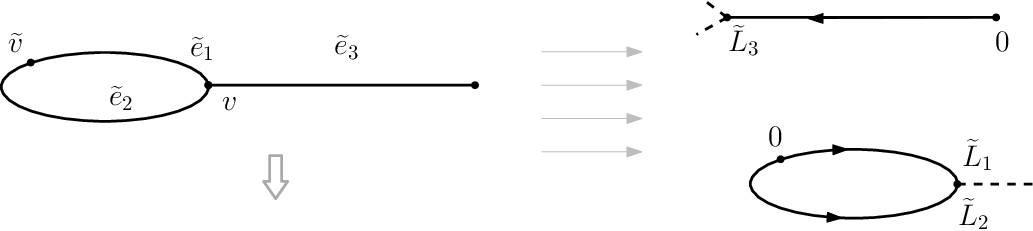}
	\caption{The new parametrization of the tadpole graph.}
    \label{tadpolebis}
\end{figure}

We consider $\widetilde\Ti$ as domain of functions $f:=(f_1,f_2,f_3):\widetilde\Ti\rightarrow \R$, such that $f_j:\widetilde e_j\rightarrow \R$ with 
$j=1,2,3$. Let $ L^2(\widetilde\Ti)=L^2(\widetilde e_1,\R)\times L^2(\widetilde e_2,\R)\times L^2(\widetilde e_3,\R)$ be the Hilbert space equipped with the corresponding norm and scalar 
product. Let $\widehat H^1(\widetilde\Ti):= H^1(\widetilde e_1,\R)\times H^1(\widetilde e_2,\R)\times H^1(\widetilde e_3,\R) $. By duality, we define the space $H^{-1}(\widetilde\Ti)$. We investigate the well-posedness of

\begin{equation}\label{mainxT_weird}\begin{split}\begin{cases}
-\widetilde k_1\dd_x^2\psi_1(x)= f_1(x),\ \ \ \ \ \ \ \ &x\in (0,\widetilde L_1),\\
-\widetilde k_2\dd_x^2\psi_2(x)= f_2(x),\ \ \ \ \ \ \ \ &x\in (0,\widetilde L_2),\\
-\widetilde k_3\dd_x^2\psi_3(x)= f_3(x),\ \ \ \ \ \ \ \ &x\in (0,\widetilde L_3).\\
\end{cases}
\end{split}
\end{equation}
equipped with the boundary conditions 
\begin{align}
&\psi_3(0)=0, \ \ \ \nonumber\\ 
&\psi_1(0)=\psi_2(0),\label{1}\\
&\widetilde k_1\dd_x\psi_1(0)+\widetilde k_2\dd_x\psi_2(0)=0,\label{2}\\
&\psi_1(\widetilde L_1)=\psi_2(\widetilde L_2)=\psi_3(\widetilde L_3),\label{3}\\
&\widetilde k_1 \dd_x\psi_1(\widetilde L_1)+\widetilde k_2 \dd_x\psi_2(\widetilde L_2)+\widetilde k_3 \dd_x\psi_3(\widetilde L_3)=0.\label{4}\end{align}

\begin{theorem} \label{th_tadpole_weird}
Let us consider the problem \eqref{mainxT_weird} with $\widetilde L_1=\widetilde L_2=\widetilde L_3=1$. Assume $(\widetilde k_1,\widetilde k_2,\widetilde k_3)=(k^-,k^+,k^+)$ be such that $k^+>0$ and 
$k^-<0$. If $$\frac{|k^-|}{k^+} \neq 1,$$
then, for every  $f\in H^{-1}(\widetilde\Ti,\R)$, there exists a unique solution $\psi\in \widehat H^1(\widetilde\Ti)$ solving \eqref{mainxT_weird}.
\end{theorem}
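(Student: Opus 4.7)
My plan is to mimic the direct-integration strategy used in the proof of Theorem \ref{th_star_Neuman_constant}: solve the ODE on each edge up to two integration constants per edge, then show that the five transmission/boundary conditions reduce to a linear system whose invertibility is governed by the stated hypothesis. Since the two ``head'' edges carry conductivities of opposite sign ($\widetilde k_1 = k^-$, $\widetilde k_2 = k^+$), the symmetry argument of Theorem \ref{th_tadpole} is not available, so a direct computation seems the cleanest route.

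Concretely, I would first write, for each $j\in\{1,2,3\}$,
\[
\psi_j(x) = \alpha_j x + \beta_j - \frac{1}{\widetilde k_j}\int_0^x\!\!\int_0^s f_j(t)\,dt\,ds,
\qquad
\partial_x\psi_j(x)=\alpha_j - \frac{1}{\widetilde k_j}\int_0^x f_j(t)\,dt,
\]
giving six unknowns $(\alpha_j,\beta_j)_{j=1,2,3}$. Then I would process the five conditions in turn. The external Dirichlet condition yields $\beta_3=0$. Continuity \eqref{1} at $\widetilde v$ gives $\beta_1=\beta_2$, and \eqref{2} (evaluated at $x=0$, where the $f_j$ integrals vanish) gives
\[
\widetilde k_1 \alpha_1 + \widetilde k_2 \alpha_2 \;=\; 0.
\]
The triple continuity \eqref{3} at $v$ provides two further linear relations, and \eqref{4}, after substituting the relation above, determines $\alpha_3$ immediately as $\alpha_3 = \widetilde k_3^{-1}\bigl(\int_0^1 f_1 + \int_0^1 f_2 + \int_0^1 f_3\bigr)$.

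The system therefore collapses to a $2\times 2$ linear system for $(\alpha_1,\alpha_2)$ of the form
\[
\begin{pmatrix}\widetilde k_1 & \widetilde k_2\\[2pt] 1 & -1\end{pmatrix}
\begin{pmatrix}\alpha_1\\ \alpha_2\end{pmatrix}
\;=\;\begin{pmatrix}0\\ g(f_1,f_2,f_3)\end{pmatrix},
\]
where $g$ is an explicit continuous linear functional of the data. The determinant of this matrix equals $-(\widetilde k_1+\widetilde k_2) = -(k^- + k^+)$, which is non-zero exactly when $|k^-|/k^+ \neq 1$. Under the hypothesis, $\alpha_1,\alpha_2$ are uniquely determined; the common value $\beta_1=\beta_2$ is then fixed by any one of the equations in \eqref{3}, and this yields the unique solution. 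Taking $f=0$ forces $g=0$, hence all integration constants vanish, establishing uniqueness simultaneously.

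The main pitfall I expect is purely bookkeeping: with six unknowns and five conditions, one must check that the remaining degree of freedom is genuinely fixed (rather than giving a kernel), and in particular verify that the condition \eqref{4} is consistent with the previously extracted relations and does not impose an extra compatibility on the data. A quick rank check confirms that the five conditions are independent as soon as $k^-+k^+\neq 0$, whereas they become compatible but under-determined when $k^-+k^+=0$, which matches the excluded resonance $|k^-|/k^+=1$ in the statement. As in Corollary \ref{coro_star_Nbis}, the same argument extends to arbitrary edge lengths $\widetilde L_j$ by rescaling each edge and absorbing the length into the effective conductivity.
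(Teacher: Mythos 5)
Your proposal is correct and follows essentially the same route as the paper: integrate the ODE on each edge and reduce the vertex conditions to a finite linear system whose determinant is a nonzero multiple of $k^++k^-$, i.e.\ exactly the excluded resonance $|k^-|/k^+=1$; the paper keeps the vertex values $(\psi_1(1),\psi_2(1),\psi_3(1),\psi_1(0),\psi_2(0))$ as unknowns and computes a $5\times 5$ determinant, while you keep the integration constants and eliminate down to a $2\times 2$ block, which is only a cosmetic difference. One small remark: there are six scalar conditions (the external Dirichlet condition plus the five at the internal vertices) matching your six constants, so the concern about ``six unknowns and five conditions'' is a miscount --- your elimination actually uses all six and leaves no free parameter.
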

\begin{proof}
The proof follows from the same arguments adopted in the proof of Theorem \ref{th_star_Neuman_constant}. We integrate each component of the equation \eqref{mainxT_weird} and we impose the boundary conditions. As in the mentioned proof, we study the linear system obtained by the boundary conditions on the two internal vertices $v$ and $\tilde v$. In this case, we obtain a vector $v$ such that
$$ v=(\psi_1(1),\psi_2(1),\psi_3(1),\psi_1(0),\psi_2(0))$$
and a matrix $M$.
\begin{eqnarray*}
M=\left(\begin{array}{*{12}c}
1 & - 1 & 0 & 0 & 0  \\
0 & 1 & - 1 & 0 & 0  \\
0  & 0 & 0 & 1 & - 1  \\
\frac{\widetilde k_1}{\widetilde L_1} &  \frac{\widetilde k_2}{\widetilde L_2} &  0  & -\frac{\widetilde k_1}{\widetilde L_1} & -\frac{\widetilde k_2}{\widetilde L_2}   \\
\frac{\widetilde k_1}{\widetilde L_1} &  \frac{\widetilde k_2}{\widetilde L_2} &  \frac{\widetilde k_3}{\widetilde L_3}  & -\frac{\widetilde k_1}{\widetilde L_1} & -\frac{\widetilde k_2}{\widetilde L_2}   \\
\end{array}\right).  \end{eqnarray*}
Notice that the first two lines follow from the boundary conditions \eqref{3}, the third from \eqref{1}, the fourth from \eqref{2}, and the fifth from \eqref{4}. The invertibility of the matrix $M$ is guaranteed when 
\begin{align*} det(M)=\frac{\widetilde k_1}{\widetilde L_1}\frac{\widetilde k_3}{\widetilde L_3}+\frac{\widetilde k_2}{\widetilde L_2}\frac{\widetilde k_3}{\widetilde L_3}\neq 0.\end{align*}
We use the hypotheses on the conductivities $\widetilde k_j$ and the lengths $\widetilde L_j$. The existence and unicity are again implied by the invertibility of $M$ which is guaranteed when $det(M)= -k^+(k^++k^-)\neq 0$.
\end{proof}

\begin{coro} \label{th_tadpole_weirdbis}
Let us consider the problem \eqref{mainxT_weird} If $$ -\frac{\widetilde k_1}{\widetilde k_2}\neq \frac{\widetilde L_1}{\widetilde L_2},$$
then, for every  $f\in H^{-1}(\widetilde\Ti,\R)$, there exists a unique solution $\psi\in \widehat H^1(\widetilde\Ti)$ solving \eqref{mainxT_weird}.
\end{coro}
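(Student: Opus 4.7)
The plan is to mimic almost verbatim the proof of Theorem \ref{th_tadpole_weird}, retaining the lengths $\widetilde L_1, \widetilde L_2, \widetilde L_3$ as general positive parameters and the conductivities $\widetilde k_1, \widetilde k_2, \widetilde k_3 \in \R^*$ without any sign restriction. The only two places where the special hypotheses enter the cited proof are (i) the normalization $\widetilde L_j = 1$, which is used to keep the entries of $M$ tidy, and (ii) the sign constraint $(\widetilde k_1,\widetilde k_2,\widetilde k_3)=(k^-,k^+,k^+)$, which is used only in the last line to rewrite the determinant. Neither plays any structural role.

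First I would integrate each of the three equations in \eqref{mainxT_weird} twice and substitute the resulting expressions for $\psi_j$ into the coupling conditions \eqref{1}--\eqref{4} together with $\psi_3(0)=0$. Exactly as in the proof of Theorem \ref{th_tadpole_weird}, this produces a $5\times 5$ linear system for the vector $v=(\psi_1(\widetilde L_1),\psi_2(\widetilde L_2),\psi_3(\widetilde L_3),\psi_1(0),\psi_2(0))$ of the form $Mv+F=0$, where $F$ depends linearly on $f$ and collects the remainders from the double integrations, and where the matrix is
\begin{eqnarray*}
M \;=\; \left(\begin{array}{ccccc}
1 & -1 & 0 & 0 & 0 \\
0 & 1 & -1 & 0 & 0 \\
0 & 0 & 0 & 1 & -1 \\
\tfrac{\widetilde k_1}{\widetilde L_1} & \tfrac{\widetilde k_2}{\widetilde L_2} & 0 & -\tfrac{\widetilde k_1}{\widetilde L_1} & -\tfrac{\widetilde k_2}{\widetilde L_2} \\
\tfrac{\widetilde k_1}{\widetilde L_1} & \tfrac{\widetilde k_2}{\widetilde L_2} & \tfrac{\widetilde k_3}{\widetilde L_3} & -\tfrac{\widetilde k_1}{\widetilde L_1} & -\tfrac{\widetilde k_2}{\widetilde L_2}
\end{array}\right).
\end{eqnarray*}

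Next I would compute $\det(M)$ directly by cofactor expansion (e.g. subtract the fourth row from the fifth to isolate $\tfrac{\widetilde k_3}{\widetilde L_3}$ in a single entry). This yields
\[
\det(M) \;=\; \frac{\widetilde k_3}{\widetilde L_3}\left(\frac{\widetilde k_1}{\widetilde L_1}+\frac{\widetilde k_2}{\widetilde L_2}\right),
\]
which recovers the formula from Theorem \ref{th_tadpole_weird} when $\widetilde L_j=1$ and the stated signs are imposed. Since $\widetilde k_3\in\R^*$ and $\widetilde L_3>0$, the matrix $M$ is invertible if and only if $\tfrac{\widetilde k_1}{\widetilde L_1}+\tfrac{\widetilde k_2}{\widetilde L_2}\neq 0$, i.e.\ if and only if $-\widetilde k_1/\widetilde k_2\neq \widetilde L_1/\widetilde L_2$. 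Under this hypothesis, for every $f\in H^{-1}(\widetilde\Ti)$ the boundary vector $v$ is uniquely determined by $v=-M^{-1}F$, and plugging back into the integrated expressions gives the unique $\psi\in \widehat H^1(\widetilde\Ti)$ solving \eqref{mainxT_weird}.

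I do not expect a genuine obstacle here: the whole content of the corollary is the determinant computation, and the factor $\widetilde k_3/\widetilde L_3$ neatly drops out because $\widetilde k_3$ is nonzero by hypothesis, so the assumption on the negative index from Theorem \ref{th_tadpole_weird} is not needed. The only point to be slightly careful about is that the signs of $\widetilde k_1$ and $\widetilde k_2$ are now arbitrary, which is precisely why the invertibility criterion must be stated in the ratio form $-\widetilde k_1/\widetilde k_2\neq \widetilde L_1/\widetilde L_2$ rather than the absolute-value form used in the two-phase specialization.
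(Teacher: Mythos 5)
Your proposal is correct and is essentially the paper's own argument: the paper proves Corollary \ref{th_tadpole_weirdbis} by observing that the proof of Theorem \ref{th_tadpole_weird} already keeps the general ratios $\widetilde k_j/\widetilde L_j$ in the matrix $M$, so the invertibility criterion $\frac{\widetilde k_1}{\widetilde L_1}+\frac{\widetilde k_2}{\widetilde L_2}\neq 0$ follows at once, exactly as you spell out. The only (immaterial) discrepancy is an overall sign in $\det(M)$, which affects neither your conclusion nor the paper's.
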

\begin{proof}
The result is a direct consequence of the proof Theorem \ref{th_tadpole_weird}.
\end{proof}

\begin{remark}
If we consider \eqref{mainxT_weird} and we change the boundary condition at the external vertex with a Neumann one, then the existence and unicity of solutions is ensured independently of the choice of the conductivities when $f$ is such that $\int_{\widetilde \Ti}f(x)dx=0$.  
\end{remark}

\begin{remark}
The techniques adopted in the proof of  Theorem \ref{th_star_Neuman_constant} and Theorem \ref{th_tadpole_weird} can be adopted to study the well-posedness of a problem as \eqref{prob_intro} defined on any network. It is sufficient to integrate the associated equations and rewrite the boundary conditions on the internal vertices as a linear system of the form $M\cdot v=F$. The matrix $M$ keeps track of the structure of the network, and its invertibility infers the well-posedness of the problem \eqref{prob_intro}. Clearly, this task can be really challenging when we study networks composed by several edges and with a complex structure.\end{remark}

\section{Spectral representation for star-networks with Dirichlet boundaries}\label{spec_star}
This section aims to provide some spectral properties to the problem \eqref{mainxSNK} defined on a star-graph $\Si$ when $k_j=1$ for $j\leq D$, while $k_j=k^-<0$ for $D+1\leq k\leq   N$. Let us consider the following quadratic forms
\begin{eqnarray*}
(\psi,\vf)_k &=& \sum_{j=1}^D \ds\int_{e_j} \psi_j \vf_j 
\;+\;\sum_{j=D+1}^N \ds\int_{e_j}  k^-\psi_j \vf_j.
\end{eqnarray*}
Notice that the quadratic form $(\cdot,\cdot)_k$ is also a scalar product only when $k^- > 0$.

\begin{itemize}
\item A natural problem concerning \eqref{mainxSNK} is the following eigenvalue problem: find $\lambda$ and $\psi \in H^1_0({\Si})$
such that, for all $\vf \in  H^1_0({\Si})$,
\begin{eqnarray}\label{spec1}
a_k(\psi,\vf) &:=& \sum_{j=1}^D \ds\int_{e_j}  \psi_j^\prime \vf_j^\prime
\;+\;
 \sum_{j=D+1}^N \ds\int_{e_j}  k^-\psi_j^\prime \vf_j^\prime
\;=\;  \lambda  (\psi,\vf).
\end{eqnarray}
It corresponds to the study of the eigenfunctions of the following operator in $L^2(\Si)$:
$$A\psi =\big(-\partial_x^2\psi_1\;,\;...\;,\;-\partial_x^2\psi_D\;,\;-k^-\partial_x^2\psi_{D+1}\;,\;...\;,\;-k^-\partial_x^2\psi_N\;\big) \ \ \  \ \ \text{for}\ \ \  \psi\in D(A) \ \ \ \ \ \ \text{with}$$
\begin{align*}
D(A)=\Bigg\{\psi\in \prod_{j=1}^N H^{2}(e_j,\R)\ :\ &\psi_1(0)=...=\psi_N(0)=0,\ \ \psi_1(|e_1|)=...=\psi_N(|e_N|),\\
& \sum_{j=1}^D \partial_x\psi_j(|e_j|) + k^-\sum_{j=D+1}^N \partial_x\psi_j(|e_j|) =0\Bigg\}.
\end{align*}
\begin{prop}\label{self_adjoint_A}
The operator $A$ is self-adjoint and admits compact resolvent when $$|k^-|\neq \frac{D}{N-D}.$$
\end{prop}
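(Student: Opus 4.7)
The plan is to realize $A$ as the operator canonically associated with the bilinear form $a_k$, in the sense that $(A\psi,\vf)_{L^2} = a_k(\psi,\vf)$ for $\psi\in D(A)$ and $\vf\in H^1_0(\Si)$, and then to exploit the T-coercivity already established in Theorem \ref{th_star_N}. The first item is the symmetry of $A$ on $D(A)$: for $\psi,\vf\in D(A)$, integrating by parts on each edge produces boundary contributions only at the internal vertex $v$, since the Dirichlet conditions on $\vf$ kill the external endpoints. These contributions collect into
\[
-\,\vf(v)\Bigl(\sum_{j\le D}\dd_x\psi_j(L_j) + k^-\sum_{j>D}\dd_x\psi_j(L_j)\Bigr),
\]
which vanishes by the weighted Kirchhoff condition built into $D(A)$. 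Hence $(A\psi,\vf)_{L^2}=a_k(\psi,\vf)$, manifestly symmetric in $(\psi,\vf)$; density of $D(A)$ in $L^2(\Si)$ is clear because smooth functions compactly supported inside a single edge already lie in $D(A)$.

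The second step is to verify $0\in\rho(A)$. Given $f\in L^2(\Si)\hookrightarrow H^{-1}(\Si)$, Theorem \ref{th_star_N} yields a unique $\psi\in H^1_0(\Si)$ solving $a_k(\psi,\vf)=(f,\vf)_{L^2}$ for all $\vf\in H^1_0(\Si)$, together with $\|\psi\|_{H^1}\le C\|f\|_{L^2}$. Choosing $\vf$ with compact support inside a single edge forces, in the distributional sense, $-\dd_x^2\psi_j=f_j$ for $j\le D$ and $-k^-\dd_x^2\psi_j=f_j$ for $j>D$, so $\psi_j\in H^2(e_j)$; then allowing $\vf(v)\neq 0$ recovers the weighted Kirchhoff condition via the computation of the symmetry step. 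Therefore $\psi\in D(A)$ with $A\psi=f$, and the bounded inverse factors as
\[
A^{-1}\;:\;L^2(\Si)\hookrightarrow H^{-1}(\Si)\xrightarrow{\;\mathcal{A}^{-1}\;} H^1_0(\Si)\hookrightarrow L^2(\Si),
\]
where $\mathcal{A}$ denotes the isomorphism provided by Theorem \ref{th_star_N}.

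The final embedding is compact by Rellich's theorem on a compact metric graph (each edge has finite length), so $A^{-1}$ is compact and the resolvent of $A$ is compact at every point of $\rho(A)$. For self-adjointness, observe that $A^{-1}$ is symmetric: for $x,y\in L^2(\Si)$, the symmetry of $A$ applied to $u=A^{-1}x$, $w=A^{-1}y\in D(A)$ gives $(A^{-1}x,y)_{L^2}=(u,Aw)_{L^2}=(Au,w)_{L^2}=(x,A^{-1}y)_{L^2}$; being bounded and everywhere defined, $A^{-1}$ is self-adjoint (Hellinger–Toeplitz), and $A$ is self-adjoint as the inverse of a bijective bounded self-adjoint operator.

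The main obstacle is the second step, where one must promote a variational solution in $H^1_0(\Si)$ to an element of the pointwise domain $D(A)$. Edge-wise $H^2$-regularity is standard, but one must carry the factor $k^-$ attached to the negative-index edges faithfully through the test-function computation near $v$ in order to recover the \emph{weighted} Kirchhoff condition, rather than the unweighted one; once this bookkeeping is done, no ingredient beyond Theorem \ref{th_star_N} is needed.
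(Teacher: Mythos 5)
Your proof is correct and rests on the same two ingredients as the paper's: symmetry of $A$ via edge-wise integration by parts (with the weighted Kirchhoff condition cancelling the vertex terms) and the invertibility supplied by Theorem~\ref{th_star_N}, which is exactly how the paper obtains the compact resolvent. The only divergence is in how self-adjointness is concluded: the paper asserts directly that $D(A)=D(A^*)$, whereas you deduce it from symmetry plus surjectivity, passing through the bounded, everywhere-defined, symmetric inverse $A^{-1}$ and Hellinger--Toeplitz; this buys you a cleaner argument in which the elliptic-regularity step (promoting the variational solution to $\prod_j H^2(e_j)$ and recovering the weighted Kirchhoff condition) is made explicit rather than hidden in the computation of $A^*$. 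One phrase to adjust: $A^{-1}$ is not a bijection of $L^2(\Si)$ onto itself (its range is $D(A)$), so invoke the standard fact that the inverse of an injective self-adjoint operator is self-adjoint, rather than ``inverse of a bijective bounded self-adjoint operator''; the argument is otherwise complete.
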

\begin{proof}
The proof of the self-adjointness of the operator $A$ is very standard. Indeed, the integration by parts in the one-dimensional integrals defining $\la\cdot,\cdot\ra_{L^2}$ leads to the symmetry of the operator at first. Second, it is straightforward to show that $D(A)=D(A^*)$. Finally, the fact that $A$ admits compact resolvent is a direct consequence of Theorem \ref{th_star_N}.
    
\end{proof}
\item On the other hand, it can also be interesting to study the following ``alternative''  eigenvalue problem: find $\lambda$ and $\psi \in H^1_0({\Si})$
such that, for all $\vf \in  H^1_0({\Si})$,
\begin{eqnarray}\label{spec2}
a_k(\psi,\vf)
\;=\;  \lambda  (\psi,\vf)_k.
\end{eqnarray}
This problem can be studied by seeking for an eigenfunction $\psi$ satisfying $\widetilde A \psi=\lambda  \psi,$ where the operator $\widetilde A$ is
$$\widetilde A\psi =\big(-\partial_x^2\psi_1\;,\;...\;,\;-\partial_x^2\psi_N\;\big) \ \ \  \ \ \text{for}\ \ \  \psi\in D(\widetilde A) \ \ \ \ \ \ \text{with}$$
\begin{align*}
D(\widetilde A)=\Bigg\{\psi\in \prod_{j=1}^N H^{2}(e_j,\R)\ :\ &\psi_1(0)=...=\psi_N(0)=0,\ \ \psi_1(|e_1|)=...=\psi_N(|e_N|),\\
& \sum_{j=1}^D \partial_x\psi_j(|e_j|) + k^-\sum_{j=D+1}^N \partial_x\psi_j(|e_j|) =0\Bigg\}.
\end{align*}

\begin{remark}\label{non_self_adjoint_A}
Notice that $\widetilde A$ is neither self-adjoint nor symmetric, on the contrary of $A$  (Proposition \ref{self_adjoint_A}). Indeed, there exist $\psi,\varphi\in D(\widetilde A)$, such that $\la \widetilde A\psi,\varphi\ra_{L^2}\neq \la \psi, \widetilde A\varphi\ra_{L^2}$. 
\end{remark}

\end{itemize}

\subsection{Standard spectral representation for equilateral star-networks}\label{spec_diff}
Let us focus on the spectral problem \eqref{spec1} when the length of all the edges is equal to $L$. To make things simple, we assume $L = 1$. The other cases can be treated in the same manner. 
To solve the spectral problem, we set $\xi =   i s \mu$ with $s=\sqrt{|k^-|}$. The eigen-elements $\psi$ and $\xi$ 
equivalently satisfy
\begin{eqnarray*}
\psi  &=& \big(\alpha_1 \sin(\xi x),...,\alpha_{D}\sin(\xi x),
\beta_{D+1} \sin(\mu x), ...,\beta_{N} \sin(\mu x)\big),
\end{eqnarray*}
where the coefficients $\alpha=(\alpha_1,...,\alpha_D)$ and $ \beta=(\beta_{D+1},...,\beta_N)$ satisfy
\begin{equation}\label{condis}
\begin{split}
\begin{cases}\alpha_j \sin(\xi) = \alpha_{j+1}\sin(\xi),\ \ \ \  \ \ \ &1\leq j \leq D-1, \\
\alpha_D \sin(\xi) = \beta_{D+1}\sin(\mu), \\
\beta_{j} \sin(\mu) = \beta_{j+1}\sin(\mu),\ \ \ \  \ \ \ &D+ 1\leq j \leq N-1, \\
\ds \sum_{j=1}^D \alpha_j \xi\cos(\xi) +k^- \ds \sum_{j=D+1}^N\beta_j \mu \cos(\mu)=0.\\
\end{cases}
\end{split}
\end{equation}

\begin{itemize}

\item Notice that when $\sin(\xi)=0$, we find the eigenvalues $\lambda$ of the form $n^2\pi^2$ with $n\in\N^*$. In this case, $\beta_j=0$ with $D+1\leq j\leq N$, thanks to the continuity condition in \eqref{condis}, and we can define the corresponding eigenfunctions of the form
$$\big(\alpha_1 \sin(n \pi x),...,\alpha_{D}\sin(n\pi x),
0,...,0\big),$$
where $\alpha_j$ are some normalizing constants such that 
$$\sum_{j=1}^D \alpha_j =0.$$
We observe that each eigenspace associated to such eigenvalues has dimension $D-1$ and then the eigenvalues are multiple if $D\geq 3$. We denote by $(\nu_{j})_{j\in\N^*}$ such eigenvalues counted with their multiplicity, and we denote by $(\varphi^{j})_{j\in\N^*}$ some corresponding normalized eigenfunctions. 

\item Similarly to the previous case, when $\sin(\mu)=0$, we find the eigenvalues $\lambda$ of the form $ k^- {n^2\pi^2}$ with $n\in\N^*$. Here, $\alpha_j=0$ with $1\leq j\leq D$ and the corresponding eigenfunctions have the form
$$\big(0,...,0,\beta_{D+1} \sin(n \pi x),...,\beta_{N}\sin(n\pi x)\big),$$
where $\beta_j$ are normalizing constants such that 
$$\sum_{j=D+1}^N \beta_j =0.$$
Each eigenspace has dimension $N-D-1$ and the eigenvalues are multiple if $N-D\geq 3$. We denote by $(\theta_{j})_{j\in\N^*}$ such eigenvalues counted with their multiplicity, and we denote by $(\rho^{j})_{j\in\N^*}$ some corresponding normalized eigenfunctions. 

\item When $\sin(\xi),\sin(\mu)\neq 0$, the conditions \eqref{condis} can be rewritten as
\begin{eqnarray*}
\left(\begin{array}{*{8}c}
1 & - 1 & 0 & ... & ...& ... &  ... & ... \\
0 & 1 & - 1 & 0 & ... & ... & ... & ... \\
... &  ... &  ...  & ... & ...  & ...  & ... & ... \\
...& ... & 0 & \sin(\xi)  & -\sin(\mu)& 0 & ... & ... \\
...& ...  &  ... &  ...  &  ... &  ... &  ... & ... \\
... & ... & ... & ... & ... & 0  & 1 & -1 \\
 \xi\cos(\xi) & ... &  \xi\cos(\xi) &  \xi\cos(\xi) & k^-\mu \cos(\mu) & k^-\mu \cos(\mu) & ... & k^-\mu \cos(\mu) \end{array}\right) \left(\begin{array}{c}  \alpha \\  \beta \end{array}\right)
&=& 0 \end{eqnarray*}
and its determinant yields the dispersion relation 
\begin{eqnarray*}
D  \xi \cos(\xi) \sin(\mu) + (N-D) k^- \mu \sin(\xi) \cos(\mu) &=& 0.
\end{eqnarray*}
We recall that $\xi =   i s \mu$ and then 
\begin{eqnarray*}
D s \cosh(s\mu ) \sin( \mu) + (N-D) k^- \sinh(s\mu ) \cos( \mu) &=& 0.
\end{eqnarray*}
Now, we notice that $s=\sqrt{|k^-|}$ which implies $s = -\frac{k^-}{s}$ and, with $x= \frac{D}{s(N-D)}$,
\begin{eqnarray}\label{disp_rel}
 \cos(\mu)\sinh(s\mu) - x \sin(\mu) \cosh(s\mu)= 0.
\end{eqnarray}
Assuming that $\mu = a+ib$, with $a,b \in \R$, this expression can be rewritten in
the form
\begin{eqnarray*}
0 &=&
\left[\cos(a)\cosh(b) - i\sin(a)\sinh(b)\right]
\left[\sinh(sa)\cos(sb) + i\cosh(sa)\sin(sb)\right]\\
&&-
x\left[\sin(a)\cosh(b) + i\cos(a)\sinh(b)\right]
\left[\cosh(sa)\cos(sb)+i \sinh(sa)\sin(sb)\right].
\end{eqnarray*}
Taking the real and imaginary part of this relation yields two
equations of the form
\begin{eqnarray*}
A \left(\begin{array}{c} x \\ 1 \end{array}\right)
&=& \left(\begin{array}{c} 0 \\ 0 \end{array}\right),
\end{eqnarray*}
where $A$ is a $2 \times 2$ matrix composed by the elements 
\begin{eqnarray*}
A_{1,1}=\cos(a) \sinh(b) \sinh(a s) \sin(b s) -  \sin(a) \cosh(b) \cosh(a s) \cos(b s),\\
A_{1,2}=\cos(a) \cosh(b) \sinh(a s) \cos(b s) + \sin(a) \sinh(b) \cosh(a s) \sin(b s),\\
A_{2,1}=- \cos(a) \sinh(b) \cosh(a s) \cos(b s) - \sin(a) \cosh(b) \sinh(a s) \sin(b s),\\
A_{2,2}= \cos(a) \cosh(b) \cosh(a s) \sin(b s) - \sin(a) \sinh(b) \sinh(a s) \cos(b s)
\end{eqnarray*}
and its determinant simplifies to
\begin{eqnarray*}
\det(A) &=& 
\cosh(sa)\sinh(sa)\cosh(b)\sinh(b)
-\sin(a)\cos(a) \sin(sb)\cos(sb)
\\
&=&
\ds\frac{1}{4}\left( \sinh(2sa)\sinh(2b) - \sin(2a)\sin(2sb) \right).
\end{eqnarray*}
Since $\sin(x) < x < \sinh(x)$ for any $x > 0$, 
if $ab>0$ we see that $\det(A) >0$ and the above system cannot have a non-trivial
solution. The same conclusion holds if $ab < 0$ since $\sin$ and $\sinh$
are odd functions.
We conclude that the dispersion relation~(\ref{disp_rel}) may have solutions only when
$\mu \in \R$ and when $\mu \in i\R$, {\it i.e.}
\[
\left\{\begin{array}{lcll}
\mu = a \in \R & \textrm{with}\quad & \tan(a) \;-\; \frac{s (N-D)}{D } \tanh(sa)\;=\; 0,
\\ [10pt]
\mu = ib/s \in i \R & \textrm{with}\quad & \tan(b) \;-\;  \ds\frac{D }{s (N-D)}\tanh(b/s)\;=\; 0.
\end{array}\right.
\]
Recalling that $s = \sqrt{|k^-|}$, there is exactly one
solution $a_n \in ]n\pi  , (n+1)\pi [$ to $$\tan(a_n) = \frac{s(N-D)}{D} \tanh(sa_n)$$
and exactly one solution $b_n  \in ]n\pi  , (n+1)\pi [$
to $$\tan(b_n) = \frac{D}{ s(N-D)}\tanh(b_n/s),$$ for $n \geq 1$.
It follows that we can construct two families of eigenfunctions.
The first corresponds to choosing $\mu_{n} = a_n, n \geq 1$ and

$$\phi^{2n}(x) = 
c_{2n}\big(\phi^{2n}_1,...,\phi_N^{2n}\big),$$
\begin{eqnarray*}
 \phi^{2n}_j=\left\{ \begin{array}{lr}
\ds\frac{\sin(a_n)}{\sinh(s a_n )}\sinh( s a_nx),
& \quad 1\leq j\leq D,
\\[8pt]
\sin(a_nx),
& \quad D+ 1\leq j\leq N,
\end{array}\right.
\end{eqnarray*}
where $c_{2n}$ is a normalization factor.
The second family corresponds to setting $\mu_{n} = ib_n/s$
or equivalently $\xi_{n} = b_n$ and the associated eigenfunctions are

$$\phi^{2n+1}(x) =
c_{2n+1}\big(\phi^{2n+1}_1,...,\phi_N^{2n+1}\big),$$
\begin{eqnarray*}
\phi^{2n+1}_j(x) =
\left\{ \begin{array}{lr}
\sin( b_n x),
& \quad 1\leq j\leq D  ,
\\[8pt]
\ds\frac{\sin(b_n)}{\sinh\big(\frac{b_n}{s}x\big)} \sinh\Bigg(\frac{b_n}{s}x\Bigg),
& \quad D+ 1\leq j\leq N,
\end{array}\right.
\end{eqnarray*}
with a normalization factor $c_{2n+1}$. 
\end{itemize}

Finally, the eigenvalues $(\lambda_n)_{n\in\N^*}$ of $A$ are obtained by reordering the sequences
$$(\nu_n)_{n\in\N^*},\ \ \ \ \ \ (\theta_n)_{n\in\N^*} ,\ \ \ \ \ \  \ (k^- a_n^2)_{n\in\N^*},  \ \ \ \ \  \ (b_n^2)_{n\in\N^*}.$$
We denote the corresponding eigenfunctions $(\psi^{n})_{n\in\N^*}$ obtained by reordering 
$$(\varphi^n)_{n\in\N^*},\ \ \ \ \ \ (\rho^n)_{n\in\N^*} ,\ \ \ \ \ \  \ (\phi^n)_{n\in\N^*}.$$

\begin{prop}\label{base}
The functions $(\psi^n)_{n \geq 1}$ form a complete orthonormal system of $L^2(\Si)$.
\end{prop}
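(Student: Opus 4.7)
The strategy is to invoke Proposition \ref{self_adjoint_A}: the operator $A$ is self-adjoint on $L^2(\Si)$ and has compact resolvent under the hypothesis $|k^-|\neq D/(N-D)$. By the spectral theorem for self-adjoint operators with compact resolvent, $L^2(\Si)$ admits a complete orthonormal basis consisting of eigenfunctions of $A$. It remains to verify that the explicit family $(\psi^n)_{n \geq 1}$ constructed above is (after a suitable choice of normalization) exactly such a basis.

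The first step is exhaustivity. Any eigenfunction $\psi \in D(A)$ of $A$ with eigenvalue $\lambda$ satisfies $-\psi_j'' = \lambda \psi_j$ on the edges $1\le j\le D$ and $-k^-\psi_j'' = \lambda\psi_j$ on $D+1\le j\le N$, together with $\psi_j(0)=0$, the continuity condition at the central vertex, and the Kirchhoff-type transmission condition. With $\mu^2 = \lambda$ and $\xi = is\mu$, this forces each $\psi_j$ to be a multiple of $\sin(\xi x)$ or $\sin(\mu x)$, and the linear system \eqref{condis} must admit a non-trivial solution. A case analysis on whether $\sin(\xi)$ or $\sin(\mu)$ vanishes then shows that $\lambda$ must belong to one of the four sequences $(\nu_n)$, $(\theta_n)$, $(k^- a_n^2)$, $(b_n^2)$; moreover, in each case the corresponding eigenspace is spanned exactly by the eigenfunctions exhibited, the dimensions $D-1$, $N-D-1$, $1$, $1$ respectively being read off from the rank of the associated linear system. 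Hence the list $(\psi^n)_{n\ge 1}$ contains at least one eigenfunction for every point of $\sigma(A)$, and a basis of every eigenspace.

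The second step is orthonormality. Since $A$ is self-adjoint, eigenfunctions associated with distinct eigenvalues are automatically orthogonal in $L^2(\Si)$. The only case that requires attention is the possibly multiple eigenvalues $\nu_n = n^2\pi^2$ (if $D\ge 3$) and $\theta_n = k^- n^2\pi^2$ (if $N-D\ge 3$): within each of these finite-dimensional eigenspaces we choose the coefficients $\alpha_j$ (respectively $\beta_j$) satisfying $\sum_{j=1}^D \alpha_j = 0$ (respectively $\sum_{j=D+1}^N \beta_j = 0$) so as to form an orthonormal basis, for instance by Gram–Schmidt. The normalization constants $c_{2n}$, $c_{2n+1}$ are then fixed to ensure $\|\phi^{2n}\|_{L^2}=\|\phi^{2n+1}\|_{L^2}=1$.

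The third and final step is completeness. Combining exhaustivity and orthonormality, the family $(\psi^n)_{n \ge 1}$ is an orthonormal family of eigenfunctions that exhausts $\sigma(A)$ together with bases of all eigenspaces. The spectral theorem applied to the self-adjoint operator $A$ with compact resolvent (Proposition \ref{self_adjoint_A}) then gives $\overline{\operatorname{span}\{\psi^n : n\ge 1\}} = L^2(\Si)$, which is the claim. The main technical point is the exhaustivity argument: one has to rule out eigenvalues hidden in the complex plane for the dispersion relation \eqref{disp_rel}, which is precisely what the computation of $\det(A)$ carried out just before the proposition achieves, since it shows that non-real roots of the dispersion relation are impossible.
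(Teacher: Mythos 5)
Your proposal is correct and follows the same route as the paper, which simply deduces the result from Proposition \ref{self_adjoint_A} (self-adjointness and compact resolvent of $A$, hence the spectral theorem) combined with the explicit eigenvalue computation carried out just before the statement. You merely spell out the details the paper leaves implicit — exhaustivity of the four eigenvalue families via the dispersion relation and the exclusion of complex roots, plus Gram–Schmidt within the multiple eigenspaces — so no substantive difference.
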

\begin{proof} The result is a direct consequence of Theorem \ref{self_adjoint_A}.
\end{proof}

\begin{remark} \label{evolsyst1}
Let us consider the following Schr\"odinger type equation in $L^2(\Si,\C)$
\begin{equation}\label{system}
\begin{split}
\begin{cases}i\partial_t \varphi = A \varphi,\ \ \ \  \ \ \ \ \  \ \ t>0, \\
\varphi(t=0)=f\in L^2(\Si).\\
\end{cases}
\end{split}
\end{equation}
If we extend our analysis leading to Proposition \ref{base} for complex-valued function, then we can ensure the well-posedness of the problem \eqref{system} and express its solutions $\varphi(t)$ for any initial data $f\in L^2(\Si,\C)$ as follows:
$$S(t)f :=\varphi(t)=\sum_{j=1}^\infty e^{-i\lambda_jt }\psi^j ( \psi^j, f)_{L^2}.$$
We observe that $(S(t))_{t \in \mathbb{R}}$ is 
a unitary group of $L^2(\Si)$ generates by $-iA$ and, for all $f \in D(A)$, the system \eqref{system} admits a unique solution $$ \varphi \in C([0,+\infty);D(A)) \cap C^1([0,+\infty); L^2(\Si,\C)), \ \ \ \ \ \left\|\varphi(t)\right\|_{L^2} =\left\|f\right\|_{L^2}, \forall \, t \geq 0.$$
\end{remark}
\begin{remark}\label{evolsyst1_bis}
Now, consider the following heat-type equation
\begin{equation}\label{system_heat_half}
\begin{split}
\begin{cases}\partial_t u + \widetilde A u=0,\ \ \ \  \ \ \ \ \  \ t>0, \\
u(t=0)=f\in L^2(\Si,\R).\\
\end{cases}
\end{split}
\end{equation}
Due to the presence of negative eigenvalues, we can not define solutions of the dynamics \eqref{system_heat_half} in the whole space $L^2(\Si,\R)$. Nevertheless, we can introduce the space 
$$X=\{u\in L^2(\Si,\R):\ \exists N\in\N\ :\ \la u,\phi^{2j}\ra =\la u,\rho^{j}\ra =0,\ \ \forall j> N \}$$
which is dense in $L^2(\Si,\R)$. The dynamics of \eqref{system_heat_half} is well-defined in $X$ and the solutions $u(t)$, for any initial data $f\in X$, are decomposed as follows with suitable $N_f\in\N$:
\begin{align*}u(t)=&\sum_{j=1}^\infty e^{-\nu_jt }\varphi^j ( \varphi^j, f)_{L^2}+\sum_{j=1}^{N_f} e^{-\theta_jt }\rho^j ( \rho^j, f)_{L^2}\\
&+\sum_{j=1}^{N_f} e^{k^- a_j^2t }\phi^{2j} ( \phi^{2j}, f)_{L^2}+\sum_{j=1}^\infty e^{-b_j^2 t }\phi^{2j+1} (\phi^{2j+1}, f)_{L^2}.
\end{align*}
\end{remark}
\subsection{Standard spectral representation for some non-equilateral star-networks}\label{weird}
Let us consider the spectral problem \eqref{spec1} when the lengths $L_j=|e_j|$ of the edges of the star-graph are such that $L_j/L_k\not\in\Q$ for every distinct $j,k\leq N$ . As in the previous section, we study the eigenfunctions of the operator $ A$ and we set $\xi =  i s \mu$ with $s=\sqrt{|k^-|}$. The eigen-elements $\psi$ and $\xi$ 
equivalently satisfy
\begin{eqnarray*}
\psi  &=& \big(\alpha_1 \sin(\xi x),...,\alpha_{D}\sin(\xi x),
\beta_{D+1} \sin(\mu x), ...,\beta_{N} \sin(\mu x)\big),
\end{eqnarray*}
where the coefficients $ \alpha=(\alpha_1,...,\alpha_D)$ and $ \beta=(\beta_{D+1},...,\beta_N)$ satisfy
\begin{equation}\label{condis_bis}
\begin{split}
\begin{cases}\alpha_j \sin(\xi L_j) = \alpha_{j+1}\sin(\xi L_{j+1}),\ \ \ \  \ \ \ &1\leq j \leq D-1, \\
\alpha_D \sin(\xi L_D) = \beta_{D+1}\sin(\mu L_{D+1}), \\
\beta_{j} \sin(\mu L_j ) = \beta_{j+1}\sin(\mu L_{j+1} ),\ \ \ \  \ \ \ &D+ 1\leq j \leq N-1, \\
\ds \sum_{j=1}^D \alpha_j \xi\cos(\xi L_j) +k^- \ds \sum_{j=D+1}^N\beta_j \mu \cos(\mu L_j )=0.\\
\end{cases}
\end{split}
\end{equation}
Notice that, in contrast to the previous section, we have that $\sin(\xi L_j),\sin(\mu L_j)\neq 0$ for every $1\leq j\leq D$ and $D+ 1\leq l\leq N$. Indeed, if for instance $\sin(\xi L_j)= 0$ for $1\leq j\leq D$, then $\beta_l=0$ for all $D+ 1\leq l\leq N$ and there exists $1\leq m\leq D$ distinct from $j$ such that  $$\sin(\xi L_j)=\sin(\xi L_m)=0.$$ This identity implies that $\xi $ is of the form $$\frac{n_j\pi}{L_j} \ \ \ \ \ \text{and}\ \ \ \  \ \ \frac{n_m \pi }{L_m} \ \ \ \  \text{ with}\ \ \ \ n_j,n_m\in\Z^*,$$ which is impossible since $L_j/L_k\not\in\Q$ for every distinct $j,k\leq N$. This argument leads to  $$\sin(\xi L_j),\sin(\mu L_l)\neq 0,\ \ \ \  \ \ \ \  1\leq j\leq D,\ \ D+ 1\leq l\leq N,$$ and it also implies that the eigenfunctions of $A$ can not have any vanishing components thanks to the continuity condition in \eqref{condis_bis}. Now, the determinant corresponding to the linear system \eqref{condis_bis} yields

\begin{equation}\label{disp_relbis}\sum_{j=1}^D \xi \cot(\xi L_j) + k^-\sum_{j=D+1}^N \mu \cot(\mu L_j )=0.\end{equation}
We can observe that the eigenvalues are simple. Indeed, if $\lambda $ is a multiple eigenvalue, then there exist at least two distinct eigenfunctions $f$ and $g$ both corresponding to $\lambda $. Such eigenfunctions do not have any vanishing component and their first components can be respectively written as 
$$f_1=\alpha_1^f\sin (\sqrt\lambda  L_1),\ \ \ \ \ \ \  \ g_1=\alpha_1^g\sin (\sqrt\lambda  L_1).$$ Now, $h=\alpha^g_1 f - \alpha^f_1 g $ is another eigenfunction of $A$ corresponding to the eigenvalue $\lambda $ and its first component $h_1=\alpha^g_1\alpha_1^f\sin (\sqrt\lambda  L_1)-\alpha^f_1\alpha_1^g\sin (\sqrt\lambda  L_1) = 0$ which is absurd. 

\smallskip

Finally, the eigenvalues $(\lambda_n)_{n\in\N^*}$ of $A$ are simple, and they obtained by solving the transcendental equation \eqref{disp_relbis}. In other words, since $\mu =  -i \frac{\xi}{s}$ with $s=\sqrt{|k^-|}$, they are the solutions of 

$$\sum_{j=1}^D   \cot(\sqrt\lambda_n L_j) +   \sqrt{|k^-|}\sum_{j=D+1}^N   \coth\Big(\frac{\sqrt\lambda_n }{\sqrt{|k^-|}} L_j \Big)=0.$$
Some corresponding eigenfunctions $(\psi^n)_{n\in\N^*}$ have the form
\begin{equation*}
\begin{split}
\psi^n = \alpha_n \Bigg( & \sin(\sqrt\lambda_n x),  \frac{\sin(\sqrt\lambda_n L_1)}{\sin(\sqrt\lambda_n L_2)}\sin(\sqrt\lambda_n x), ...,  \frac{\sin(\sqrt\lambda_n L_1)}{\sin(\sqrt\lambda_n L_D)}\sin(\lambda_n x),\\
&  \frac{\sin(\sqrt\lambda_n L_1)}{\sinh\Big(\frac{\sqrt\lambda_n }{\sqrt{|k^-|} } L_{D+1} \Big)}\sinh\Big(\frac{\sqrt\lambda_n }{\sqrt{|k^-|} } x \Big),..., \frac{\sin(\sqrt\lambda_n L_1)}{\sinh\Big(\frac{\sqrt\lambda_n }{\sqrt{|k^-|} } L_{N} \Big)}\sinh\Big(\frac{\sqrt\lambda_n }{\sqrt{|k^-|} } x \Big)\Bigg) ,
\end{split}
\end{equation*}
with $\alpha_n$ some normalization constants. As in the previous section, the operator $A$ is self-adjoint with compact resolvent which implies the following result. 
 
\begin{prop}\label{base_bis}
The functions $(\psi^n)_{n \geq 1}$ form a complete orthonormal system of $L^2(\Si)$.
\end{prop}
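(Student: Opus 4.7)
The plan is to mirror the proof of Proposition \ref{base}: establish that the operator $A$ associated with the present non-equilateral setting is self-adjoint with compact resolvent on $L^2(\Si)$, so that the spectral theorem yields a complete orthonormal system of eigenfunctions, and then verify that the explicit family $(\psi^n)_{n\geq 1}$ exhausts all the eigenfunctions of $A$ up to normalization.

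First, I would check symmetry of $A$ by integrating by parts on each edge. For $\psi,\vf\in D(A)$, the boundary terms at the external vertices vanish thanks to the Dirichlet conditions $\psi_j(0)=\vf_j(0)=0$, while the boundary terms at the internal vertex collapse because of the continuity conditions $\psi_1(L_1)=\dots=\psi_N(L_N)$ and $\vf_1(L_1)=\dots=\vf_N(L_N)$ together with the Kirchhoff-type identity
\[
\sum_{j=1}^D \dd_x\psi_j(L_j) + k^-\sum_{j=D+1}^N \dd_x\psi_j(L_j) = 0
\]
and its analogue for $\vf$. The inclusion $D(A)\subset D(A^*)$ is immediate, and the reverse inclusion is obtained by the same test-function argument as in Proposition \ref{self_adjoint_A}: any $\vf\in D(A^*)$ must satisfy $\vf_j\in H^2(e_j)$ (by testing against compactly supported functions in each edge), the external Dirichlet conditions, the continuity at the internal vertex, and the weighted Kirchhoff condition. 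Hence $A=A^*$.

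Next, I would establish that $A$ has compact resolvent. Since $\Si$ is a finite union of bounded intervals, the embedding $\prod_j H^2(e_j)\hookrightarrow L^2(\Si)$ is compact; as $D(A)$ is continuously embedded in this product, it suffices to exhibit a single point in the resolvent set of $A$. Self-adjointness already guarantees that any $z\in\C\setminus\R$ lies in $\rho(A)$, so $(A-zI)^{-1}\colon L^2(\Si)\to D(A)\hookrightarrow L^2(\Si)$ is compact. By the spectral theorem for self-adjoint operators with compact resolvent, $L^2(\Si)$ admits a complete orthonormal basis of eigenfunctions of $A$, and the spectrum is a real sequence tending to $\pm\infty$.

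It remains to identify this basis with the family $(\psi^n)_{n\geq 1}$ constructed via the dispersion relation \eqref{disp_relbis}. Any eigenfunction of $A$ must, edge by edge, solve the second-order ODE with the prescribed boundary and transmission conditions, hence take the form analyzed just before the statement. The irrationality hypothesis $L_j/L_k\notin\Q$ rules out the degenerate cases $\sin(\xi L_j)=0$ or $\sin(\mu L_l)=0$, so no component of an eigenfunction can vanish and the eigenvalues are forced to be solutions of \eqref{disp_relbis}. As already observed in the text, this implies that every eigenspace is one-dimensional and spanned by the corresponding $\psi^n$; once the constants $\alpha_n$ are chosen so that $\|\psi^n\|_{L^2}=1$, the orthogonality of distinct $\psi^n$ is automatic from the self-adjointness of $A$, and completeness follows from the spectral theorem. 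The main subtlety is the verification that $D(A^*)\subset D(A)$ (in particular recovering the weighted Kirchhoff condition), but this is a routine adaptation of the corresponding step in Proposition \ref{self_adjoint_A} to the case of unequal edge lengths.
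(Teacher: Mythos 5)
Your proposal is correct and takes essentially the same route as the paper: completeness is deduced from the self-adjointness and compact resolvent of $A$ (as in Proposition \ref{self_adjoint_A}), combined with the explicit identification of the eigenpairs through the dispersion relation \eqref{disp_relbis} and the simplicity argument based on $L_j/L_k\notin\Q$. The only minor deviation is that you obtain the compact resolvent from self-adjointness (non-real points of the resolvent set) together with the compact embedding of $\prod_j H^2(e_j)$ into $L^2(\Si)$, whereas the paper invokes the T-coercivity well-posedness result of Theorem \ref{th_star_N}; both are valid, and your variant has the small advantage of not requiring any restriction on $k^-$ at this stage.
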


\subsection{Spectral representation via pseudo-eigenfunctions for equilateral star-networks}
In the current section, we want to investigate the ``alternative'' spectral problem \eqref{spec2} when 
the edges of the star-graph have the same length. We assume for simplicity that $|e_j|=1$ for every $1\leq j\leq N$. Notice that an eigenfunction of $\widetilde A$ has the form
\begin{align}\label{eig}\psi=(\psi_1,...\psi_N),\ \ \  \ \ \ \  \ \psi_j(x) = \alpha_j \sin(\sqrt\lambda x),\ \ \ \ \ \ \ 1 \leq j\leq N,\end{align} where the coefficients $\alpha_j$ satisfy
$$\alpha_j \sin(\sqrt\lambda) = \alpha_N \sin(\sqrt\lambda), \quad\quad 1 \leq j \leq N,$$
$$\sum_{j=1}^D  \alpha_j \cos(\sqrt\lambda)+k^- \sum_{j=D+1}^N   \alpha_j \cos(\sqrt\lambda) = 0.$$
One easily checks that the determinant of this linear system is equal to
\begin{eqnarray*}
\sin(\sqrt\lambda)^{N-1}\cos(\sqrt\lambda)\big(-D -k^- (N-D)\big)
\end{eqnarray*}
and we recover the fact that the bilinear form $a_k$ is an isomorphism under the condition appearing in Theorem \ref{th_star_N} which is 
$$-k^- \neq \frac{D}{N-D}.$$
We find two families of eigenpairs. 
The first one $(\nu_n,\phi^n)_{n \in\N^*}$ is given by 
\begin{eqnarray} \label{bf_e}
\nu_n = \frac{(2n-1)^2\pi^2}{4}
&\textrm{and}&
\phi^{n}=(\phi^{n}_1,...,\phi^{n}_N),\ \ \ \ \ \  \ \phi^{n}_j=  \sin\Bigg(\frac{(2n-1)\pi}{2}x\Bigg).
\end{eqnarray}
Notice that $\cos(\sqrt\nu_n) = 0$ and 
the derivatives of $\phi^{n}$ vanish at the endpoint $x=1$ of each edge. The second family of eigenpairs corresponds to eigenvalues of multiplicity $N-1$:
$$\gamma_n = n^2 \pi^2,\ \ \ \  \ \ \ n\in\N^*.$$
The associated eigenspace to each $\gamma_n$ corresponds to eigenfunctions of the form
$$\varphi=( \alpha_{1} \sin(n\pi x),...,\alpha_{N} \sin(n\pi x)) \ \ \ \ \ \ \ \text{with}\ \ \ \ \ \ \ \ \ \sum_{j=1}^D \alpha_j +k^-\sum_{j=D+1}^N \alpha_j = 0.$$
Thus, we can construct the associated eigenfunctions as follows
$$\varphi=( k^- \widetilde \alpha_{1} \sin(n\pi x),..., k^- \widetilde \alpha_{D} \sin(n\pi x),\widetilde \alpha_{D+1} \sin(n\pi x),...,\widetilde \alpha_{N} \sin(n\pi x)), \ \ \ \ \text{with}\ \ \ \ \ \ \ \ \ \sum_{j=1}^N \widetilde \alpha_j = 0.$$
An example of basis for an eigenspace when $D= 1$ and $N=3$ is the one formed by the two orthogonal functions
$$( 0, - \sin(n\pi x), \sin(n\pi x) ),\ \ \ \ \ \ \ \ \ \ \ \ ( -2 k^- \sin(n\pi x) , \sin(n\pi x), \sin(n\pi x) ).$$

Finally, we call $(\lambda_n)_{n\in\N^*} $ the sequence of eigenvalues of $\widetilde A$ obtained by reordering the sequences $(\nu_n)_{n\in\N^*} $ and $(\gamma_n)_{n\in\N^*} $ and then
$$\lambda_n = \frac{n^2\pi^2}{4}.$$
Each eigenvalue $\lambda_n$ with $n\in 2\N^*$ has multiplicity $N-1$, while the others are simple.  We denote by $(\psi^n)_{n\in\N^*}$ a sequence of orthonormalized eigenfunctions corresponding to $(\lambda_n)_{n\in\N^*} $.

\begin{prop}\label{prop-Riesz}
The family of functions $(\psi^n)_{n\in\N^*}$ forms a Riesz Basis of $L^2(\Si)$ , {\it i.e.} it is isomorphic to an orthonormal system of $L^2(\Si)$.
\end{prop}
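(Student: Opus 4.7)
The strategy is to split $L^2(\Si)$ as a topological direct sum of two closed subspaces in each of which one of the two eigenfamilies forms a Hilbert basis, and then to invoke the general principle that the union of orthonormal bases of two topologically complementary closed subspaces is a Riesz basis of the ambient space. I would set
$$V_1 = \{(f,\dots,f) : f \in L^2(0,1)\}, \qquad V_2 = \Big\{(f_1,\dots,f_N) \in L^2(\Si) : \sum_{j=1}^D f_j + k^- \sum_{j=D+1}^N f_j = 0\Big\},$$
and check that $L^2(\Si) = V_1 \oplus V_2$: writing $g = (f,\dots,f) + h$ with $h \in V_2$ forces $f = (D + k^-(N-D))^{-1}\bigl(\sum_{j=1}^D g_j + k^-\sum_{j=D+1}^N g_j\bigr)$, which is well-defined precisely under the standing assumption $-k^- \neq D/(N-D)$ and yields bounded projections onto $V_1$ and $V_2$.

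Next I would identify the two eigenfamilies with orthonormal bases of $V_1$ and $V_2$. After normalization by $\sqrt{2/N}$, the family $(\phi^n)_{n\ge 1}$ from \eqref{bf_e} lives in and is orthonormal in $V_1$; it is complete there because $\{\sqrt{2}\sin((2n-1)\pi x/2)\}_{n \ge 1}$ is an orthonormal basis of $L^2(0,1)$, namely the eigenbasis of the Dirichlet--Neumann Laplacian on $(0,1)$. For the $\gamma_n$ family, I would fix an orthonormal basis $(w^{(i)})_{i=1}^{N-1}$ of the hyperplane $W := \{\alpha \in \R^N : \sum_{j=1}^D \alpha_j + k^-\sum_{j=D+1}^N \alpha_j = 0\}$ for the standard Euclidean inner product, and take as $\gamma_n$-eigenfunctions $\varphi^{n,i}_j(x) = w^{(i)}_j \sqrt{2}\sin(n\pi x)$. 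These are orthonormal in $L^2(\Si)$ (by orthogonality of the Dirichlet sines on $(0,1)$ combined with orthonormality of the $w^{(i)}$'s in $\R^N$) and complete in $V_2$: for any $f \in V_2$, expanding each $f_j$ in the Dirichlet sine basis produces Fourier coefficient vectors that lie in $W$ by linearity of the defining constraint, and these coefficient vectors in turn expand in the $w^{(i)}$'s.

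The conclusion then follows from an elementary abstract fact: whenever $H = V_1 \oplus V_2$ is a topological direct sum with bounded projections, and $(e_i), (f_j)$ are orthonormal bases of $V_1, V_2$, the map $\ell^2 \oplus \ell^2 \to H$ defined by $(c,d) \mapsto \sum_i c_i e_i + \sum_j d_j f_j$ is bounded (by the triangle inequality) and bijective (by the direct sum structure), with bounded inverse coming from the boundedness of the projections onto $V_1$ and $V_2$; therefore $\{e_i\} \cup \{f_j\}$ is a Riesz basis of $H$. The only point requiring genuine care in this plan is the topological direct sum decomposition $L^2(\Si) = V_1 \oplus V_2$, and this rests entirely on the non-degeneracy condition $D + k^-(N-D) \neq 0$ already built into the setting; once that is secured, everything else reduces to standard Fourier analysis edge by edge.
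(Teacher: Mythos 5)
Your proposal is correct, and it follows a genuinely different route from the paper. The paper argues by transporting the eigenfamilies through the explicit map $T$ (identity on the $\nu_n$-family, rescaling of the first $D$ components by $1/k^-$ on the $\gamma_n$-family) onto the eigenfunctions of the standard self-adjoint Dirichlet Laplacian on the equilateral star graph, and then cites the completeness of that eigenbasis (\cite[Section 4.1]{mioecri}) to conclude. You instead never compare with a self-adjoint reference operator: you split $L^2(\Si)$ as the topological, non-orthogonal direct sum $V_1\oplus V_2$ cut out by the constraint $\sum_{j\leq D}f_j+k^-\sum_{j>D}f_j=0$, verify by elementary one-dimensional Fourier analysis that the $\nu_n$-family and the (suitably chosen, orthonormalized) $\gamma_n$-family are orthonormal bases of $V_1$ and $V_2$ respectively, and invoke the standard fact that the union of orthonormal bases of two topologically complementary closed subspaces is a Riesz basis. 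What your route buys: it is self-contained, it makes completely transparent that the non-degeneracy condition $D+k^-(N-D)\neq 0$ is exactly what makes the projections bounded, and it does not require the two eigenfamilies to be mutually orthogonal --- a real advantage, since the cross products $\int_0^1\sin(n\pi x)\sin\bigl(\tfrac{(2l-1)\pi}{2}x\bigr)\,dx$ do not actually vanish, so the orthogonality invoked at the start of the paper's proof cannot be used as stated, whereas your argument sidesteps it entirely. What the paper's route buys is brevity and a structural insight reused later (e.g.\ in Proposition \ref{prop-Riesz_1}): the pseudo-eigenfunctions are exhibited as the image, under a bounded multiplication-type isomorphism, of an orthonormal eigenbasis of a self-adjoint quantum-graph Laplacian. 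The only cosmetic point in your write-up is that the proposition allows an arbitrary orthonormalized choice of eigenfunctions, while you construct a specific one; since any two such choices differ by a block-diagonal unitary acting within the finite-dimensional eigenspaces, the Riesz basis property is unaffected, so this is not a gap.
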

\begin{proof}
First, notice that the two families of eigenfunctions respectively corresponding to the eigenvalues $(\nu_n)_{n\in\N^*} $ and $(\gamma_n)_{n\in\N^*} $ are mutually orthogonal  since $$\Big(\sin(n\pi x),\sin\Big(\frac{2l -1}{2}\pi x\Big)\Big)_{L^2}=0,\ \ \ \ \ \ \ \ \ \  \ \ \forall n,l\in\N^*.$$ We denote by $\Hi_1$ and $\Hi_2$ the corresponding eigenspaces. Second, we introduce the linear mapping $T:\Hi_1 + \Hi_2\rightarrow L^2(\Si)$ such that $T|_{\Hi_1}=Id_{\Hi_1}$ and $T|_{\Hi_2}$ is defined as follows
$$T\psi=(\frac{1}{ k^- }\psi_1,...,\frac{1}{ k^- }\psi_D, \psi_{D+1},..., \psi_N),\ \ \ \ \ \ \  \ \psi\in \Hi_2.$$
We notice that $T$ is an isomorphism from $\Hi_1+\Hi_2$ to $T(\Hi_1+\Hi_2)$.  Now,  $(\phi^n)_{n\in\N^*}$ with $\phi^j=S(T\psi^j)$ are eigenfunctions of a standard Dirichlet Laplacian defined on a star-graph with edges of equal length (see for instance \cite[Section 4.1]{mioecri}) which is self-adjoint with compact resolvent. Finally, $(\phi^n)_{n\in\N^*}$ define a complete orthonormal system, $\Hi_1+\Hi_2=L^2(\Si,\R)$ and $(\psi^n)_{n\in\N^*}$ forms a Riesz Basis in $L^2(\Si)$.
\end{proof}

\begin{remark} \label{evolsyst2}
Similarly to Remark \ref{evolsyst1}, we can exploit Proposition \ref{prop-Riesz} to characterize some evolution equations defined with the operator $\widetilde A$. Consider for instance the following Schr\"odinger-type equation in $L^2(\Si,\C)$
\begin{equation}\label{system_sch}
\begin{split}
\begin{cases}i\partial_t v = \widetilde A v,\ \ \ \  \ \ \ \ \  \ \ t>0, \\
v(t=0)=f\in L^2(\Si).\\
\end{cases}
\end{split}
\end{equation}
Notice that our analysis leading to Proposition \ref{prop-Riesz} is still valid for complex valued functions.
First, we introduce the biorthogonal family in $L^2(\Si,\C)$ of the Riesz basis $(\psi^n)_{n\in\N^*}$ : this is the unique sequence of functions $(\sigma^n)_{n\in\N^*}$ such that
$$\big(\psi^n,\sigma^k\big)_{L^2}=\delta_{k,n},  \ \ \  \ \ \ \  \ \ \  \ \forall k\in\N^*.$$
The functions $\sigma^n$ can be explicitly computed by using the orthogonal projectors\, $\pi_k:L^2\longrightarrow \overline{\spn\{\psi^j:j\neq k\}}^{\, L^2}$ as follows:
$$\sigma^k=\frac{\psi^k-\pi_k \psi^k}{\|\psi^k-\pi_k\psi^k\|^{2}_{L^2}},\ \ \ \ \ \ \ \forall k\in\N^*.$$ 
Second, we ensure the well-posedness of the evolution equation \eqref{system_sch} by using the properties of the Riesz basis and its solutions $v(t)$ for any initial data $f\in L^2(\Si,\C)$ are of the form:
$$T(t)f :=v(t)=\sum_{j=1}^\infty e^{-i\lambda_jt }\psi^j ( \sigma^j, f)_{L^2}.$$
We have that $(T(t))_{t \in \mathbb{R}}$ is 
a group of $L^2(\Si,\C)$ generates by $-i\tilde{A}$ and, for all $f \in D(\tilde{A})$, the system \ref{system_sch} admits a unique solution $$ v \in C([0,+\infty);D(\tilde{A})) \cap C^1([0,+\infty); L^2(\Si,C)),\ \ \ \ \ \ \left\|v(t)\right\|_{L^2} =\left\|f\right\|_{L^2}, \forall \, t \geq 0.$$

\end{remark}

\begin{remark} \label{evolsyst2_bis}
Consider now the following heat-type equation in $L^2(\Si, \R)$
\begin{equation}\label{system_heat}
\begin{split}
\begin{cases}\partial_t u + \widetilde A u=0,\ \ \ \  \ \ \ \ \  \ t>0, \\
u(t=0)=f\in L^2(\Si,\R).\\
\end{cases}
\end{split}
\end{equation}
Equivalently to the complex case (Remark \ref{evolsyst2}), we can define a family of eigenfunctions $(\psi^n)_{n\in\N^*}$ of $\widetilde A$ forming a Riesz basis of $L^2(\Si, \R)$ and we denote by $(\sigma^n)_{n\in\N^*}$ its biorthogonal family. Now, the solutions $u(t)$ of the problem \eqref{system_heat} for any initial data $f\in L^2(\Si,\R)$ are:
$$R(t)f :=u(t)=\sum_{j=1}^\infty e^{-\lambda_jt }\psi^j ( \sigma^j, f)_{L^2}.$$
This solution is a semigroup solution and, when $f \in D(\tilde{A}),$ 
$$u \in C([0,+\infty);D(\tilde{A})) \cap C^1([0,+\infty); L^2(\Si)).$$
Notice that, on the contrary for the operator $A$ considered in Remark \ref{evolsyst1_bis}, the eigenvalues of $\widetilde A$ are positive, which ensures the fact the solutions are well-defined.  
\end{remark}

\subsection{Spectral representation via pseudo-eigenfunctions for some non-equilateral star-networks}
This section aims to study \eqref{spec2} when the lengths $L_j=|e_j|$ of the edges of the star-graph are such that $L_j/L_k\not\in\Q$ for every distinct $j,k\leq N$ . As in the previous section, we study the eigenfunctions of the operator $\widetilde A$ which have the form
\begin{align}\label{eig_bis}\psi=(\psi_1,...\psi_N),\ \ \  \ \ \ \  \ \psi_j(x) = \alpha_j \sin(\sqrt\lambda x),\ \ \ \ \ \ \ 1 \leq j\leq N,\end{align} where the coefficients $\alpha_j$ satisfy
$$\alpha_j \sin(\sqrt\lambda L_j) = \alpha_N \sin(\sqrt\lambda L_N), \quad\quad 1 \leq j \leq N,$$
$$\sum_{j=1}^D  \alpha_j \cos(\sqrt\lambda L_j)+k^- \sum_{j=D+1}^N   \alpha_j \cos(\sqrt\lambda L_j ) = 0.$$
On the contrary of the previous section, we can observe that the coefficients $$\alpha_j\neq 0,\ \ \ \  \ \ \forall 1\leq j\leq N.$$ Indeed, if one of these coefficients vanishes, then there exist distinct $l,m\leq N$ such that $\alpha_l,\alpha_m\neq 0$, and $$\sin(\sqrt\lambda L_l)=\sin(\sqrt\lambda L_m)=0.$$ This identity implies that $\lambda$ is of the form $$\frac{n_l^2\pi^2}{L_l^2} \ \ \ \ \ \text{and}\ \ \ \  \ \ \frac{n_m^2\pi^2}{L_m^2} \ \ \ \  \text{ with}\ \ \ \ n_l,n_m\in\Z^*,$$ which is impossible since $L_j/L_k\not\in\Q$ for every distinct $j,k\leq N$. This argument not only yields that all the components of $\psi$ are different from $0$, but also that $$ \sin(\sqrt\lambda L_l)\neq 0,\ \ \  \ \  \ \ \ \ \ \forall 1\leq l\leq N,$$ and it can also be used to prove that
$$ \cos(\sqrt\lambda L_l)\neq 0,\ \ \  \ \  \ \ \ \ \ \forall 1\leq l\leq N.$$
In addition, we have that each eigenvalue $\lambda$ is simple since the existence of multiple eigenvalues would imply the existence of eigenfunctions vanishing in at least one edge which is impossible (as in Section \ref{weird}).

\smallskip

Finally, we can define the eigenpairs $(\lambda_n,\psi^n)_{n \in\N^*}$ where $(\lambda_n)_{n\in\N^*}$ is a sequence of simple eigenvalues defined by the relation
$$\sum_{j=1}^D  \cot(\sqrt\lambda_n L_j)+k^- \sum_{j=D+1}^N    \cot(\sqrt\lambda_n L_j ) = 0,$$
while $(\psi^n)_{n\in\N^*}$ is a sequence of eigenfunctions such that
$$\psi^n=\alpha_n\Big( \frac{\sin(\sqrt\lambda_n L_N)}{\sin(\sqrt\lambda_n L_1)}\sin(\lambda_n x), ... ,\frac{\sin(\sqrt\lambda_n L_{N})}{\sin(\sqrt\lambda_n L_{N-1})}\sin(\sqrt\lambda_n x), \sin(\sqrt\lambda_n x)\Big),$$
with $\alpha_n$ some normalizing constants.
\begin{prop}\label{prop-Riesz_1}
The family of functions $(\psi^n)_{n\in\N^*}$ forms a Riesz Basis of $L^2(\Si)$, {\it i.e.} it is isomorphic to an orthonormal system of $L^2(\Si)$.
\end{prop}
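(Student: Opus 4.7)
The plan is to mirror the proof of Proposition \ref{prop-Riesz}, exhibiting a bounded isomorphism $T$ of $L^2(\Si)$ that sends $(\psi^n)_{n\in\N^*}$ to a complete orthonormal system. A key structural difference from the equilateral case is that all eigenvalues of $\widetilde A$ are now simple and all eigenfunctions are non-vanishing on every edge, so the clean decomposition $\Hi_1\oplus\Hi_2$ used there is no longer available; in particular the naive edge-wise rescaling $T|_{\Hi_2}$ cannot be applied globally without breaking the continuity condition at the internal vertex.

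The natural starting point is the observation that $\widetilde A$ is symmetric with respect to the indefinite bilinear form $(\cdot,\cdot)_k$ introduced at the beginning of Section \ref{spec_star}: an integration by parts combined with the generalized Kirchhoff condition encoded in $D(\widetilde A)$ yields $(\widetilde A\psi,\varphi)_k = a_k(\psi,\varphi) = (\psi,\widetilde A\varphi)_k$. To exploit this Krein-space type symmetry, I would compare $\widetilde A$ with the standard Dirichlet Laplacian $\widehat A$ on the same star graph (the same second derivative action but with the classical Kirchhoff condition $\sum_{j=1}^N \partial_x\psi_j(L_j)=0$), whose eigenfunctions $(\phi^n)$ form a genuine orthonormal basis of $L^2(\Si)$ under the incommensurability hypothesis $L_j/L_k\notin\Q$. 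Completeness and minimality of $(\psi^n)$ should then follow from the simplicity of the eigenvalues and the non-vanishing of all components on each edge, properties already established in the discussion preceding the proposition.

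The main obstacle lies in upgrading completeness to the Riesz basis property. The natural route is Bari's theorem: it suffices to show, after suitable normalization, the quadratic closeness $\sum_n \|\psi^n - \phi^n\|^2_{L^2}<\infty$. This reduces to a careful asymptotic analysis comparing the roots of the two transcendental dispersion relations $\sum_{j=1}^D \cot(\sqrt{\lambda_n} L_j) + k^- \sum_{j=D+1}^N \cot(\sqrt{\lambda_n} L_j)=0$ and $\sum_{j=1}^N \cot(\sqrt{\mu_n} L_j)=0$, together with controlling the edge-wise coefficients $\sin(\sqrt{\lambda_n} L_N)/\sin(\sqrt{\lambda_n} L_j)$ appearing in the explicit formula for $\psi^n$. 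The incommensurability hypothesis will be crucial here: it prevents the denominators from vanishing or clustering near zero, which would otherwise produce small-denominator phenomena and break the control of the perturbation of $\widetilde A$ by $\widehat A$ for large $n$.
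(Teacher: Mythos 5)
Your plan hinges on Bari's theorem, and that is precisely where it breaks down: the quadratic closeness $\sum_n\|\psi^n-\phi^n\|_{L^2}^2<\infty$ to the eigenfunctions of the standard Kirchhoff--Dirichlet Laplacian is not only left unproved, it is not plausible with this pairing. The eigenvalues of $\widetilde A$ solve $\sum_{j=1}^{D}\cot(\sqrt{\lambda}\,L_j)+k^-\sum_{j=D+1}^{N}\cot(\sqrt{\lambda}\,L_j)=0$, while those of the comparison operator solve $\sum_{j=1}^{N}\cot(\sqrt{\mu}\,L_j)=0$; these are two different almost-periodic secular functions whose zero sets agree only at the level of Weyl counting and do not merge asymptotically, so $\sqrt{\lambda_n}-\sqrt{\mu_n}$ does not tend to zero and $\|\psi^n-\phi^n\|_{L^2}$ stays of order one along typical subsequences. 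Moreover, your appeal to incommensurability is backwards: $L_j/L_k\notin\Q$ rules out exact vanishing of $\sin(\sqrt{\lambda_n}L_j)$, but by equidistribution these quantities come arbitrarily close to zero infinitely often, so the coefficients $\sin(\sqrt{\lambda_n}L_N)/\sin(\sqrt{\lambda_n}L_j)$ are in general not uniformly bounded; small denominators are present, not excluded. Finally, for the non-self-adjoint operator $\widetilde A$, completeness and minimality of $(\psi^n)$ do not ``follow from simplicity of the eigenvalues and non-vanishing of the components''; that assertion also requires proof.

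The paper's argument is of a different (non-perturbative) nature: exactly as in Proposition \ref{prop-Riesz}, it exhibits an isomorphism $T$ of $L^2(\Si)$ under which the functions $T\psi^n$ are identified with the eigenfunctions of the standard self-adjoint Dirichlet Laplacian on the same non-equilateral star graph, which form a complete orthonormal system (see \cite[Section 4.1]{mioecri}); the Riesz basis property is then transported through $T$, with no asymptotic comparison of dispersion relations and no Bari-type estimate. Your opening observation that the orthogonal decomposition $\Hi_1+\Hi_2$ of the equilateral case is no longer available is correct, but the remedy used in the paper is still an exact transformation onto a self-adjoint model rather than a perturbation argument; to salvage your route you would need either such an exact isomorphism or a comparison family whose eigenvalues genuinely converge to the $\lambda_n$ fast enough for quadratic closeness, which the Kirchhoff family does not provide.
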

\begin{proof}
The result is ensured equivalently to Proposition \ref{prop-Riesz}. Indeed, the functions $\phi_j=T\psi_j$ are the eigenfunctions of the standard Dirichlet Laplacian defined on star graphs with edges of different lengths which is self-adjoint with compact resolvent (see again \cite[Section 4.1]{mioecri}).
\end{proof}

\section{Spectral representation for star-networks with mixed boundaries}\label{spec_star_neumann}
This section aims to retrace the theory of Section \ref{spec_star} when some external vertices of the star-graph are equipped with Neumann boundary conditions. In detail, we consider the two spectral problems \eqref{spec1} and \eqref{spec2} and we study them with functions in $\widehat H^1(\Si)$ when the boundary conditions in \eqref{mainxSNK_Neumann} are verified.  To simplify the theory, we assume $N_d^+=N_n^+=N_d^-=N_n^-=2$, but the same approach can be used in the general case.

\subsection{Standard spectral representation for equilateral star-networks}\label{spec_diff_neumann}
Let us consider the spectral problem \eqref{spec1} when the length of all the edges is equal to $L=1$. The same results can be directly extended to the case where all the edges have the same length. 
\begin{itemize}

\item We find the eigenvalues $\lambda$ of the form $$n^2\pi^2,\ \ \ \ \ \ \ \forall n\in\N^*$$ and corresponding to the eigenfunctions of the form
$$\psi= \big(\sin(n \pi x),-\sin(n \pi x),0,0,0,0,0,0\big).$$
Notice that such eigenvalues are simple due to the choice of $N_d^+=2$. In the general case, the same type of construction can be repeated and the multiplicity is $N_d^+-1$.
\item We find the eigenvalues $\lambda$ of the form $$\frac{(2n-1)^2\pi^2}{4},\ \ \ \ \ \ \ \forall n\in\N^*$$ and corresponding to the eigenfunctions of the form
$$\psi= \Big(0,0,\cos\big(\frac{(2n-1) \pi}{2} x\big),-\cos\big(\frac{(2n-1) \pi}{2} x\big),0,0,0,0,0\Big).$$
These eigenvalues are simple and, in the general case, their multiplicity is $N_d^+-1$.

\item We find the eigenvalues $\lambda$ of the form $$k^-n^2\pi^2,\ \ \ \ \ \ \ \forall n\in\N^*$$  and corresponding to the eigenfunctions of the form
$$\psi=\big(0,0,0,0, \sin(n \pi x),- \sin(n \pi x),0,0\big),$$
As above, the eigenvalues are simple (in the general case the multiplicity is $N_d^--1$).
\item We find the eigenvalues $\lambda$ of the form $$k^-\frac{(2n-1)^2\pi^2}{4},\ \ \ \ \ \ \ \forall n\in\N^*$$  and corresponding to the eigenfunctions of the form
$$\psi=\Big(0,0,0,0,0,0, \cos\big(\frac{(2n-1) \pi}{2} x\big),- \cos\big(\frac{(2n-1) \pi}{2} x\big)\Big),$$
The eigenvalues are simple (the multiplicity is $N_d^--1$).

\item The last family of eigenvalues $\lambda\in\R$ are not of the form
$$\frac{n^2\pi^2}{4}, \ \ \ k^-\frac{n^2\pi^2}{4},\ \ \ \ \ \ \ \forall n\in\N^*.$$
They are defined as the simple zeros of the equation\begin{equation}\begin{split}\label{nini}&N_d^+\cot(\sqrt{\lambda})-N_n^+ \tan(\sqrt{\lambda})-\sqrt{|k^-|}  N_d^-\coth\Big(\frac{\sqrt{\lambda}}{\sqrt{|k^-|}} \Big)-\sqrt{|k^-|} N_n^-\tanh\Big(\frac{\sqrt{\lambda}}{\sqrt{|k^-|}} \Big)\\
 &=2\cot(\sqrt{\lambda})- 2\tan(\sqrt{\lambda})-2\sqrt{|k^-|} \coth\Big(\frac{\sqrt{\lambda}}{\sqrt{|k^-|}} \Big)-2\sqrt{|k^-|}\tanh\Big(\frac{\sqrt{\lambda}}{\sqrt{|k^-|}} \Big)=0.\end{split}\end{equation}The corresponding eigenfunctions have the form
$$\psi (x) =
 \big(\psi_1,...,\psi_8 \big),$$
\begin{eqnarray*}
\psi_j(x) =
\left\{ \begin{array}{lr}
\sin( \sqrt{\lambda}  x)
& \quad 1\leq j\leq 2  ,
\\[8pt]
\ds\frac{\sin(\sqrt{\lambda} )}{\cos(\sqrt{\lambda})} \cos(\sqrt{\lambda} x)
& \quad 3\leq j\leq 4,\\[8pt]
\ds\frac{\sin(\sqrt{\lambda})}{\sinh\big(\frac{\sqrt{\lambda}}{\sqrt{|k^-|}}x\big)} \sinh\Bigg(\frac{\sqrt{\lambda}}{\sqrt{|k^-|}}x\Bigg)
& \quad 5\leq j\leq 6,\\[8pt]
\ds\frac{\sin(\sqrt{\lambda} )}{\cosh\big(\frac{\sqrt{\lambda} }{\sqrt{|k^-|}}x\big)} \cosh\Bigg(\frac{\sqrt{\lambda} }{\sqrt{|k^-|}}x\Bigg)
& \quad 7\leq j\leq 8.\\
\end{array}\right.
\end{eqnarray*}
The definition of this eigenpairs can be directly extended to the general case of $\N_d^+,\N_n^+,\N_d^-,\N_n^-\in\N^*.$

\end{itemize}

\subsection{Standard spectral representation for some non-equilateral star-networks}\label{weird_neumann}
We consider the spectral problem \eqref{spec1} in the general case when $L_j/L_k\not\in\Q$ and we proceed as in Section \ref{weird}. The eigenvalues $\lambda\in\R$ in such a framework are simple, and they are defined as the zeros of the identity
\begin{equation}\label{eqeq}\begin{split}&0=\sum_{j=1}^{2}  \cot(\sqrt{\lambda} L_j)-\sum_{j=3}^{4} \tan(\sqrt{\lambda} L_j)-\\
& \sqrt{|k^-|}\sum_{j=5}^{6}  \coth\Big(\frac{\sqrt{\lambda}}{\sqrt{|k^-|}} L_j \Big)-\sqrt{|k^-|}\sum_{j=7}^8 \tanh\Big(\frac{\sqrt{\lambda}}{\sqrt{|k^-|}} L_j \Big).\end{split}\end{equation}
The corresponding eigenfunctions have the form

$$\psi (x) =
 \big(\psi_1,...,\psi_8 \big),$$
\begin{eqnarray*}
\psi_j(x) =
\left\{ \begin{array}{lr}
\sin( \sqrt{\lambda}  x)
& \quad 1\leq j\leq 2  ,
\\[8pt]
\ds\frac{\sin(\sqrt{\lambda} )}{\cos(\sqrt{\lambda})} \cos(\sqrt{\lambda} x)
& \quad 3\leq j\leq 4,\\[8pt]
\ds\frac{\sin(\sqrt{\lambda})}{\sinh\big(\frac{\sqrt{\lambda}}{\sqrt{|k^-|}}x\big)} \sinh\Bigg(\frac{\sqrt{\lambda}}{\sqrt{|k^-|}}x\Bigg)
& \quad 5\leq j\leq 6,\\[8pt]
\ds\frac{\sin(\sqrt{\lambda} )}{\cosh\big(\frac{\sqrt{\lambda} }{\sqrt{|k^-|}}x\big)} \cosh\Bigg(\frac{\sqrt{\lambda} }{\sqrt{|k^-|}}x\Bigg)
& \quad 7\leq j\leq 8.\\
\end{array}\right.
\end{eqnarray*}

\begin{remark}
Notice that the equation \eqref{eqeq} can be extended to the general case of  $N_d^+,N_n^+,N_d^-,N_n^-\in\N^*$ by changing the extremes of summation of each term. The construction of the eigenfunctions follows equivalently. \end{remark}

\subsection{Spectral representation via pseudo-eigenfunctions for equilateral star-networks}
In the current section, we study the spectral problem \eqref{spec2} when the edges of the star-graph have the same length $L=1$ and in the presence of some Neumann boundary conditions. 
\begin{itemize}
\item We can define two families of eigenvalues. The first are the numbers $\lambda$ of the form 
$$\frac{(2n-1)^2\pi^2}{4},\ \ \ \ \ \ \forall n\in\N^*,$$
and with multiplicity $4$.  The corresponding eigenfunctions $\psi$ have the following forms
$$\Big( \sin(\sqrt{\lambda}x),  \sin(\sqrt{\lambda}x),1,1,\sin(\sqrt{\lambda}x),\sin(\sqrt{\lambda}x),1,1\Big),$$
$$\Big(0, 0,\cos(\sqrt{\lambda}x) ,-\cos(\sqrt{\lambda}x),0,0,0,0\Big),$$
$$\Big(0, 0,0,0,0,0,\cos(\sqrt{\lambda}x) ,-\cos(\sqrt{\lambda}x)\Big),$$
$$\Big(0, 0,-k^-\cos(\sqrt{\lambda}x) ,-k^-\cos(\sqrt{\lambda}x),0,0,\cos(\sqrt{\lambda}x),\cos(\sqrt{\lambda}x)\Big).$$
For general $N_d^+,N_n^+,N_d^-,N_d^-\in\N^*$, the $4$ type of eigenfunctions can be defined equivalently: the first type is always simple, the second has multiplicity $N_n^+-1$, the third has multiplicity $N_n^--1$, and the fourth is always simple.

\smallskip

\item  The second family of eigenvalues is composed of numbers $\lambda$ of the form:
$$ n^2 \pi^2,\ \ \ \  \ \ \ n\in\N^*$$
with multiplicity $3$ (in the general case $N_d^++N_d^--1$). The corresponding eigenfunctions have  the form
$$\psi=(\psi^1,\psi^2,0,0,\psi^5,\psi^6,0,0),\ \ \  \ \ \ \ \psi^j(x)  =  \alpha_{j} \sin(n\pi x)$$
$$\text{with}\ \ \ \ \ \ \ \ \ \alpha_1+\alpha_2 + k^-\alpha_5+k^-\alpha_6 = 0.$$
An example is given by the following three orthogonal functions
$$( - \sin(n\pi x), \sin(n\pi x),0,0,0,0,0,0 ),\ \ \ \ \ \ \ \ \ \ \ \ (0,0,0,0, - \sin(n\pi x), \sin(n\pi x),0,0 ),$$
$$(-k^-\sin(n\pi x), -k^-\sin(n\pi x),0,0, \sin(n\pi x),\sin(n\pi x),0,0 ).$$
\end{itemize}

\subsection{Spectral representation via pseudo-eigenfunctions for some non-equilateral star-networks}
Finally, we study \eqref{spec2}  when $L_j=|e_j|$ are such that $L_j/L_k\not\in\Q$ for every distinct $j,k\leq N$ . In this case, the eigenvalues $\lambda$ are simple (this also true for general $N_d^+,N_n^+,N_d^-,N_n^-$) and they are the zeros of the equation
$$\sum_{j=1}^2  \cot(\sqrt\lambda L_j)-\sum_{j=3}^4  \tan(\sqrt\lambda L_j)+k^- \sum_{j=5}^6    \cot(\sqrt\lambda L_j ) -k^- \sum_{j=7}^8    \tan(\sqrt\lambda L_j ) = 0.$$
The corresponding eigenfunctions have the form

$$\psi (x) =
 \big(\psi_1,...,\psi_8 \big),$$
\begin{eqnarray*}
\psi_j(x) =
\left\{ \begin{array}{lr}
\sin( \sqrt{\lambda}  x)
& \quad 1\leq j\leq 2  ,
\\[8pt]
\ds\frac{\sin(\sqrt{\lambda} )}{\cos(\sqrt{\lambda})} \cos(\sqrt{\lambda} x)
& \quad 3\leq j\leq 4,\\[8pt]
\ds\frac{\sin(\sqrt{\lambda})}{\sinh\big(\frac{\sqrt{\lambda}}{\sqrt{|k^-|}} \big)} \sin\Bigg(\frac{\sqrt{\lambda}}{\sqrt{|k^-|}}x\Bigg)
& \quad 5\leq j\leq 6,\\[8pt]
\ds\frac{\sin(\sqrt{\lambda} )}{\cosh\big(\frac{\sqrt{\lambda} }{\sqrt{|k^-|}}\big)} \cos\Bigg(\frac{\sqrt{\lambda} }{\sqrt{|k^-|}} x\Bigg)
& \quad 7\leq j\leq 8.\\
\end{array}\right.
\end{eqnarray*}

\section{Spectral representation for two-phase tadpole networks}\label{spec_tad}

This section aims to study the spectral representations considered in Section \ref{spec_star} for the case of the tadpole graphs $\Ti$ introduced in Section \ref{simple}.  The case in which the external vertex is not equipped with Dirichlet boundary conditions but with the Neumann one can be treated in the same way.

\smallskip

Assume that the lengths of the edges are $|e_1|=L_1$ and $|e_2|=L_2$. We choose the constant conductivities: $k_1=1$ and $k_2=k^-<0$.  As in the previous section, we introduced the quadratic form
\begin{eqnarray*}
(\psi,\vf)_k &=& \int_{e_1} \psi_1 \vf_1
\;+\; \int_{e_2}  k^-\psi_2 \vf_2.
\end{eqnarray*}
and, again, one can specify two different interesting spectral problems.

\begin{itemize}
    \item We aim to find $\lambda$ and $\psi \in H^1_0({\Ti})$
such that, for all $\vf \in  H^1_0({\Ti})$,
\begin{eqnarray}\label{spec1_tad}
a_k(\psi,\vf) &:=&    \int_{e_1}  \psi_1^\prime \vf_1^\prime
\;+\;
 \int_{e_2}  k^-\psi_2^\prime \vf_2^\prime
\;=\;  \lambda  (\psi,\vf).
\end{eqnarray}
It corresponds to the study the eigenfunctions of the following operator in $L^2(\Ti)$:
$$A\psi =\big(-\partial_x^2\psi_1, \;-k^-\partial_x^2\psi_{2} \big) \ \ \  \ \ \text{for}\ \ \  \psi\in D(A) \ \ \ \ \ \ \text{with}$$
\begin{align*}
D(A)=\Big\{\psi\in  H^{2}(e_1,\R)\times H^{2}(e_2,\R)\ :\ &\psi_2(0)=0,\ \ \psi_1(0)=\psi_1(|e_1|)= \psi_N(|e_2|),\\
&  \partial_x\psi_1(0)-\partial_x\psi_1(|e_1|)-k^-  \partial_x\psi_2(|e_2|) =0\Big\}.
\end{align*}

\begin{prop}\label{self_adjoint_A_tadpole}
The operator $A$ is self-adjoint and admits compact resolvent for every $k^-<0$.
\end{prop}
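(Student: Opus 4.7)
The plan is to mimic the strategy of Proposition~\ref{self_adjoint_A}, adapted to the tadpole geometry described in Section~\ref{simple}. The key point is that well-posedness for the tadpole, provided by Theorem~\ref{th_tadpole}, is available for \emph{every} $k^-<0$ (since $k_1=1>0$ and $k_2=k^-<0$ have opposite signs), so no exceptional value of $k^-$ appears here, unlike in Proposition~\ref{self_adjoint_A}.

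\textbf{Symmetry.} I would first pick $\psi,\varphi\in D(A)$ and integrate by parts twice on each edge to compute
\[
(A\psi,\varphi)_{L^2}-(\psi,A\varphi)_{L^2}
=\bigl[\partial_x\psi_1\,\varphi_1-\psi_1\,\partial_x\varphi_1\bigr]_0^{L_1}
+k^-\bigl[\partial_x\psi_2\,\varphi_2-\psi_2\,\partial_x\varphi_2\bigr]_0^{L_2}.
\]
The Dirichlet condition $\psi_2(0)=\varphi_2(0)=0$ kills the contribution at the external vertex of $e_2$. At the internal vertex all traces $\psi_1(0)=\psi_1(L_1)=\psi_2(L_2)$ are equal (and likewise for $\varphi$), so the four remaining boundary terms can be grouped and factored by this common trace, leaving exactly the Kirchhoff combination $\partial_x\psi_1(L_1)-\partial_x\psi_1(0)+k^-\partial_x\psi_2(L_2)$ (and its analogue for $\varphi$) which vanishes by the definition of $D(A)$. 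Hence $A$ is symmetric.

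\textbf{Self-adjointness.} I would then show $D(A^*)\subseteq D(A)$. Take $\varphi\in D(A^*)$, so that $\psi\mapsto (A\psi,\varphi)_{L^2}$ extends continuously to $L^2(\Ti)$. Testing against $\psi$ with compactly supported components inside each edge forces $\varphi_j\in H^2(e_j)$ (with the correct scalar multiple of $-\partial_x^2$ matching $A^*\varphi$ in $L^2$). Once the $H^2$-regularity is in hand, varying $\psi$ over the full $D(A)$ and using the symmetry computation above shows that the boundary expression must vanish for every admissible trace and derivative of $\psi$; this exactly forces $\varphi$ to satisfy the Dirichlet, continuity and Kirchhoff conditions defining $D(A)$. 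This gives $D(A^*)=D(A)$ and $A=A^*$.

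\textbf{Compact resolvent.} Theorem~\ref{th_tadpole} applies to the present setting (opposite signs of $k_1,k_2$), and yields, for every $f\in H^{-1}(\Ti)$, a unique weak solution $\psi\in \widehat{H}^1(\Ti)$ of the associated problem satisfying the same transmission/boundary conditions (up to adding a spectral shift $+\mu I$ with $\mu>0$ if needed to absorb the zero eigenvalue issue). By elliptic regularity on each edge, such $\psi$ actually lies in $D(A)$, so $A+\mu I\colon D(A)\to L^2(\Ti)$ is bijective with bounded inverse $R:L^2(\Ti)\to D(A)\hookrightarrow \widehat{H}^1(\Ti)$. Since $\Ti$ is a compact metric graph, the embedding $\widehat{H}^1(\Ti)\hookrightarrow L^2(\Ti)$ is compact (Rellich), so $R$ is compact as an operator on $L^2(\Ti)$. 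The main (mild) obstacle is purely bookkeeping: organizing the four boundary terms coming from the self-loop $e_1$ so that the Kirchhoff condition at the internal vertex appears cleanly; once that is done, both self-adjointness and compact resolvent follow as in Proposition~\ref{self_adjoint_A}.
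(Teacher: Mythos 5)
Your proposal is correct and follows essentially the same route as the paper, which simply notes that symmetry and $D(A)=D(A^*)$ follow by integration by parts as in Proposition~\ref{self_adjoint_A} and that the compact resolvent is a consequence of Theorem~\ref{th_tadpole} combined with the compact embedding $\widehat H^1(\Ti)\hookrightarrow L^2(\Ti)$; your write-up just makes these steps explicit (and the overall sign of your boundary-term identity, as well as the spectral shift $+\mu I$, are immaterial since Theorem~\ref{th_tadpole} already gives $0$ in the resolvent set).
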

\begin{proof}
First, the fact that $A$ is self-adjoint follows exactly as in the proof of Proposition  \ref{self_adjoint_A}. Second, $A$ admits compact resolvent thanks to Theorem \ref{th_tadpole}.
\end{proof}

\item The second ``alternative''  eigenvalue problem is: find $\lambda$ and $\psi \in H^1_0({\Ti})$
such that, for all $\vf \in  H^1_0({\Ti})$,
\begin{eqnarray}\label{spec2_tad}
a_k(\psi,\vf) \;=\;  \lambda  (\psi,\vf)_k.
\end{eqnarray}
It corresponds to study the spectral problem associated with the operator:
$$\widetilde A\psi =\big(-\partial_x^2\psi_1\;, \;-\partial_x^2\psi_{2} \big) \ \ \  \ \ \text{for}\ \ \  \psi\in D(\widetilde A) \ \ \ \ \ \ \text{with}$$
\begin{align*}
D(\widetilde A)=\Big\{\psi\in  H^{2}(e_1,\R)\times H^{2}(e_2,\R)\ :\ &\psi_2(0)=0,\ \ \psi_1(0)=\psi_1(|e_1|)= \psi_N(|e_2|),\\
&  \partial_x\psi_1(0)-\partial_x\psi_1(|e_1|)-k^-  \partial_x\psi_2(|e_2|) =0\Big\}.
\end{align*}

\begin{remark} As in Remark \ref{non_self_adjoint_A}, the operator $\widetilde A$ is neither self-adjoint, nor symmetric. \end{remark}
 \end{itemize}

 \subsection{Standard spectral representation}

Let us start by considering the spectral problem \eqref{spec1_tad}.  We exploit the ideas developed in the proof of Theorem \ref{th_tadpole} and the eigenfunctions are either symmetric or antisymmetric w.r.t. symmetry axe $r$. This fact allows us to distinguish between two families of eigenpairs. The first $(\nu_j,\varphi^j)_{j\in\N^*}$ is composed of antisymmetric functions:
$$\nu_j=\frac{4j^2\pi^2}{L_1^2},\ \ \ \ \ \ \  \ \ \ \ \  \ \ \varphi^{j}=\Big(\alpha_j \sin\Big(\frac{2j \pi }{L_1 }x\Big),0\Big).$$
with some normalizing $\alpha_j$. The second $(\theta_j,\phi^j)_{j\in\N^*}$ is instead associated to symmetric eigefunctions. Each eigenvalue, in this case, is associated to a function of form:
$$\Big(\beta_1\cos\Big(\xi \Big(x-\frac{L_1}{2}\Big)\Big), \beta_2\sin (\mu x ) \Big),$$
with $\xi =  i s \mu$ and $s=\sqrt{|k^-|}$. Now, the boundary conditions defining $D(A)$ yield
\begin{equation}\label{condis_tad1}
\begin{split}
\begin{cases} 
\beta_{1} \cos\Big(\xi \frac{L_1}{2} \Big) =  \beta_{2}\sin (\mu L_{2} ), \\
2 \beta_1\xi \sin\Big(\xi \frac{L_1}{2}\Big) -k^-  \beta_{2} \mu \cos (\mu L_{2} )=0.\\
\end{cases}
\end{split}
\end{equation}
These identities tell us that $\beta_1,\beta_2\neq 0$ and $\cos(\xi \frac{L_1}{2}),\sin (\mu L_{2}  )\neq 0 $. Thus, by recalling $\mu = - i\frac{ \xi}{\sqrt{|k^-|}}$, the eigenvalues $\theta_j$ are those numbers solving the following equation
$$2 \tan\Big(\sqrt\theta_j\frac{L_1}{2}\Big) + \sqrt{|k^-|}    \coth\Big(\frac{\sqrt\theta_j}{\sqrt{|k^-|}} L_{2} \Big)=0.$$
Notice that $(\theta_j)_{j\in\N^*}$ are also real thanks to the self-adjointness of $A$ and simple thanks to the same arguments adopted in Section \ref{weird}. The eigenfunctions $\phi^j$ can be defined via some normalizing constants $\alpha_j$ as follows:
$$\phi^j=\alpha_j\Bigg(\cos\Big(\sqrt\theta_j \Big(x-\frac{L_1}{2}\Big)\Big),  \frac{\cos\Big(\sqrt\theta_j \frac{L_1}{2} \Big)}{\sinh \Big(\frac{\sqrt\theta_j}{\sqrt{|k^-|}} L_{2} \Big)}\sinh\Big(\frac{\sqrt\theta_j}{\sqrt{|k^-|}} x\Big)  \Bigg).$$
Finally, the eigenvalues $( \lambda_j)_{j\in\N^*}$ of $A$ are obtained by reordering $(\nu_j)_{j\in\N^*}$ and $(\theta_j)_{j\in\N^*}$. We denote by $(\psi^j)_{j\in\N^*}$ the corresponding family of eigenfunctions obtained by reordering $(\varphi^j)_{j\in\N^*}$ and $(\phi^j)_{j\in\N^*}$.

\begin{prop}\label{base_tad}
The functions $(\psi^n)_{n \geq 1}$ form a complete orthonormal system of $L^2(\Ti)$.
\end{prop}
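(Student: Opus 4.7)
The plan is to deduce the claim from Proposition \ref{self_adjoint_A_tadpole} combined with the general spectral theorem for self-adjoint operators with compact resolvent: any such operator admits an orthonormal basis of $L^2$ consisting of its eigenfunctions, and the eigenvalues form a real sequence tending to $+\infty$ (counted with multiplicity). Hence, once one checks that the list $(\psi^n)_{n\ge 1}$ exhausts, modulo renormalization and choice of an orthonormal basis of each eigenspace, all the eigenfunctions of $A$, the conclusion follows.

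First, I would invoke the reflection $R$ with respect to the symmetry axis $r$ used in the proof of Theorem \ref{th_tadpole}. The operator $R$ is a unitary involution on $L^2(\Ti)$, it preserves $D(A)$, and it commutes with $A$ because the conductivities and the lengths are invariant under $R$. Consequently the orthogonal decomposition
\[
L^2(\Ti) \;=\; L^2_{\rm sym}(\Ti) \,\oplus\, L^2_{\rm anti}(\Ti)
\]
reduces $A$, and the spectrum of $A$ is the disjoint union of the spectra of the two restrictions $A|_{\rm sym}$ and $A|_{\rm anti}$, both of which inherit self-adjointness and compactness of the resolvent from $A$.

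Second, I would identify each restriction with a standard one-dimensional problem. On $L^2_{\rm anti}(\Ti)$, the component on $e_2$ vanishes and the problem reduces to the usual Dirichlet Laplacian on $(0,L_1/2)$ (or equivalently on $(0,L_1)$ after unfolding), whose spectrum and eigenfunctions are exactly $(\nu_j,\varphi^j)_{j\in\N^*}$ described above. On $L^2_{\rm sym}(\Ti)$, I already wrote the eigenfunctions in the form
\[
\Bigl(\beta_1\cos\bigl(\xi(x-L_1/2)\bigr),\;\beta_2\sin(\mu x)\Bigr),
\]
and showed that the transmission conditions \eqref{condis_tad1} are equivalent to the transcendental equation
\[
2\tan\!\Big(\tfrac{\sqrt{\theta}}{2}L_1\Big) + \sqrt{|k^-|}\,\coth\!\Big(\tfrac{\sqrt{\theta}}{\sqrt{|k^-|}}L_2\Big) = 0,
\]
whose real solutions give precisely the sequence $(\theta_j,\phi^j)_{j\in\N^*}$.

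The only point requiring a small check is that the list of eigenpairs constructed on each symmetry subspace is complete in that subspace, {\it i.e.}~that no eigenfunction has been missed. This follows because any eigenfunction must be of the general ansatz (a combination of $\sin$ and $\cos$ on each edge), and the boundary/transmission conditions leave only the one-parameter families written above. Assembling the two bases obtained on $L^2_{\rm sym}$ and $L^2_{\rm anti}$ then yields an orthonormal basis of $L^2(\Ti)$, which is exactly the reordered family $(\psi^n)_{n\in\N^*}$. The main (though still routine) obstacle is this completeness verification within each symmetry subspace; once it is accepted, the rest is a direct application of the spectral theorem via Proposition \ref{self_adjoint_A_tadpole}.
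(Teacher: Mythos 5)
Your proposal is correct and takes essentially the same route as the paper: completeness follows from the self-adjointness and compact resolvent of $A$ (Proposition \ref{self_adjoint_A_tadpole}), combined with the symmetric/antisymmetric decomposition from Theorem \ref{th_tadpole} that produces exactly the two families $(\nu_j,\varphi^j)_{j\in\N^*}$ and $(\theta_j,\phi^j)_{j\in\N^*}$. The only blemish is the parenthetical claim that the eigenvalues tend to $+\infty$: since the form $a_k$ is indefinite, $A$ also has a sequence of eigenvalues accumulating at $-\infty$ (corresponding to the hyperbolic counterpart of the dispersion relation), but this is immaterial to the completeness argument.
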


\subsection{Pseudo-eigenfunctions for a tadpole network}
As above, we distinguish between two families of eigenpairs. The first $(\nu_j,\varphi^j)_{j\in\N^*}$ is the same defined for the operator $A$. The second eigenpair instead $(\widetilde \theta_j,\widetilde \phi^j)_{j\in\N^*}$ is associated to eigefunctions of form
$$\Bigg(\widetilde\beta_1\cos\Big( \sqrt{\widetilde\theta} \Big(x-\frac{L_1}{2}\Big)\Big), \widetilde\beta_2\sin (  \sqrt{\widetilde\theta}  x ) \Bigg).$$
such that
\begin{equation}\label{condis_tad1bis}
\begin{split}
\begin{cases} 
\widetilde \beta_{1} \cos\Big( \sqrt{\widetilde\theta} \frac{L_1}{2} \Big) =  \widetilde \beta_{2}\sin  (  \sqrt{\widetilde\theta}  L_{2}  ),\ \ \ \  \  \\
2 \widetilde \beta_{1} \sin\Big( \sqrt{\widetilde\theta} \frac{L_1}{2}\Big) -k^-  \widetilde \beta_{2} \cos (  \sqrt{\widetilde\theta}  L_{2}  )=0\\ 
\end{cases}
\end{split}
\end{equation}
with $ \widetilde \beta_1, \widetilde \beta_2\neq 0$ and. Notice that, if
\begin{equation}\label{id1} \cos\Big( \sqrt{\widetilde\theta}\frac{L_1}{2} \Big) =\sin (  \sqrt{\widetilde\theta} L_{2} )= 0, \end{equation}
then $ {\widetilde\theta} $ has the form 
$$ \frac{(2n_1-1)^2\pi^2}{L_1^2}\ \ \ \ \ \ \ \ \text{and}\ \ \ \  \   \ \  \frac{n_2^2\pi^2}{L_2^2},$$
which is possible only  when $$\frac{L_1}{L_2}\in \Q.$$
Some corresponding eigenfunctions have the form, with $\widetilde\alpha $ suitable, 
$$\widetilde\phi=\Bigg(\cos\Big( \sqrt{\widetilde\theta} \Big(x-\frac{L_1}{2}\Big)\Big),\widetilde\alpha \sin \Big( \sqrt{\widetilde\theta}  x \Big)\Bigg).$$
When $ \cos\big( \sqrt{\widetilde\theta}\frac{L_1}{2} \big)\neq 0$ and $\sin (  \sqrt{\widetilde\theta} L_{2} )\neq  0, $ we can define the eigenvalues by the relation 
\begin{equation}\label{id2}2   \tan\Big( \sqrt{\widetilde\theta} \frac{L_1}{2}\Big) -k^-   \cot (  \sqrt{\widetilde\theta}  L_{2}  )=0\end{equation}
In this case, some corresponding eigenfunctions are defined by
$$\widetilde\phi=\Bigg(\cos\Big( \sqrt{\widetilde\theta} \Big(x-\frac{L_1}{2}\Big)\Big), \frac{\cos\Big( \sqrt{\widetilde\theta} \frac{L_1}{2} \Big) }{\sin  (  \sqrt{\widetilde\theta}  L_{2}  )}\sin \Big( \sqrt{\widetilde\theta}  x \Big)\Bigg).$$
Now, the sequence of eigenvalues $(\widetilde \theta_j)_{j\in\N^*}$ is defined by reordering the solutions of \eqref{id1} and of \eqref{id2} and we denote by $(\widetilde \phi^j)_{j\in\N^*}$ some corresponding normalized eigenfunctions.

\smallskip

Finally, the eigenvalues $(\widetilde \lambda_j)_{j\in\N^*}$ of $\widetilde A$  are obtained by reordering $(\nu_j)_{j\in\N^*}$ and $(\widetilde \theta_j)_{j\in\N^*}$. We denote by $(\widetilde \psi^j)_{j\in\N^*}$ the corresponding family of eigenfunctions obtained by reordering $(\varphi^j)_{j\in\N^*}$ and $(\widetilde \phi^j)_{j\in\N^*}$.

\smallskip

As done for the star graphs with Proposition \ref{prop-Riesz} and Propositions \ref{prop-Riesz_1}, we can see now that $(\widetilde \psi^j)_{j\in\N^*}$ are isomorphic to the eigenfunctions of the Dirichlet Laplacian defined on a tadpole graph (see \cite[Section 4.2]{mioecri}), which yields the following proposition
\begin{prop}\label{prop-Riesz_1bis}
The family of functions $(\widetilde \psi^n)_{n\in\N^*}$ forms a Riesz Basis of $L^2(\Ti)$, {\it i.e.} it is isomorphic to an orthonormal system of $L^2(\Ti)$.
\end{prop}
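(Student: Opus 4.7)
The plan is to mirror the strategy used in the proofs of Proposition \ref{prop-Riesz} and Proposition \ref{prop-Riesz_1}: exhibit an explicit isomorphism $T$ on $L^2(\Ti)$ which maps the family $(\widetilde\psi^n)_{n\in\N^*}$ onto a complete orthonormal system, namely the eigenfunctions of the standard Dirichlet Laplacian on the tadpole graph $\Ti$ (which is self-adjoint with compact resolvent, as recalled from \cite[Section 4.2]{mioecri}). Since being the image of an orthonormal basis under an isomorphism is exactly the definition of a Riesz basis, this will conclude the proof.

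First, I would use the decomposition introduced in Theorem \ref{th_tadpole}: set $\Hi_1$ to be the span of the antisymmetric eigenfunctions $(\varphi^j)_{j\in\N^*}$ (which are supported on $e_1$ only) and $\Hi_2$ the span of the symmetric eigenfunctions $(\widetilde\phi^j)_{j\in\N^*}$ (which have nonzero components on both edges). Since the symmetric and antisymmetric subspaces are mutually orthogonal in $L^2(\Ti)$, it suffices to treat the two families separately. The functions $(\varphi^j)_{j\in\N^*}$ are, up to normalization, already Dirichlet sinusoids on $e_1$ vanishing at the midpoint, so they are eigenfunctions of the standard Dirichlet Laplacian on $\Ti$ in the antisymmetric sector, and no rescaling is needed on $\Hi_1$.

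The nontrivial part concerns $\Hi_2$. Here I would define $T$ to act as the identity on $\Hi_1$, and on $\Hi_2$ by rescaling the tail component: explicitly, given $\psi=(\psi_1,\psi_2)\in\Hi_2$, set $T\psi := (\psi_1, \tau\psi_2)$ for an appropriate constant $\tau$ depending on $k^-$ (essentially $\tau = 1/\sqrt{|k^-|}$ or a similar factor dictated by the transmission condition in $D(\widetilde A)$). The idea is that the symmetric transmission condition $2\widetilde\beta_1\sin(\sqrt{\widetilde\theta}\,L_1/2) - k^-\widetilde\beta_2\cos(\sqrt{\widetilde\theta}\,L_2) = 0$ together with the rescaling $x\mapsto x/\sqrt{|k^-|}$ on the tail turns the eigenvalue equation into the standard Dirichlet problem on a modified tadpole; after rescaling back, the result is an eigenfunction of the standard Dirichlet Laplacian on $\Ti$.

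The map $T$ is clearly a linear bijection with bounded inverse from $\Hi_1\oplus\Hi_2$ onto its image in $L^2(\Ti)$, so it is an isomorphism. Since $(T\widetilde\psi^n)_{n\in\N^*}$ is (a reordering of) the family of eigenfunctions of the standard Dirichlet Laplacian on $\Ti$, which forms a complete orthonormal system of $L^2(\Ti)$ by \cite[Section 4.2]{mioecri}, we conclude that $(\widetilde\psi^n)_{n\in\N^*}$ is a Riesz basis of $L^2(\Ti)$. The main technical obstacle I expect is pinning down the correct rescaling constant $\tau$ so that $T\widetilde\psi^n$ genuinely satisfies the transmission and boundary conditions of the standard Dirichlet Laplacian on $\Ti$ (including in the degenerate case \eqref{id1} where $L_1/L_2\in\Q$, which must be treated separately to ensure completeness in $\Hi_2$); once this identification is done, the rest is formal.
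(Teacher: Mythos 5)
Your overall strategy is the one the paper uses: its proof consists precisely in observing, as in Propositions \ref{prop-Riesz} and \ref{prop-Riesz_1}, that $(\widetilde\psi^n)_{n\in\N^*}$ is the image under an isomorphism of the complete orthonormal family of eigenfunctions of the standard Dirichlet Laplacian on the tadpole from \cite[Section 4.2]{mioecri}, and your splitting into the antisymmetric sector (where nothing needs to be done) and the symmetric sector matches how the eigenpairs $(\nu_j,\varphi^j)$ and $(\widetilde\theta_j,\widetilde\phi^j)$ are organized in the paper.

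However, the concrete isomorphism you propose on the symmetric sector rests on a confusion between the two spectral problems, and this is exactly the step that carries the content of the proposition. For the pseudo-eigenvalue problem \eqref{spec2_tad} the operator $\widetilde A$ acts as $-\partial_x^2$ on \emph{both} edges; the coefficient $k^-$ enters only through the vertex condition $\partial_x\psi_1(0)-\partial_x\psi_1(L_1)-k^-\partial_x\psi_2(L_2)=0$. Hence the symmetric pseudo-eigenfunctions $\widetilde\phi^j$ are purely trigonometric with the \emph{same} frequency $\sqrt{\widetilde\theta_j}$ on the head and on the tail (see \eqref{condis_tad1bis}): there is no hyperbolic component and no argument $x/\sqrt{|k^-|}$ to remove, so there is no eigenvalue equation that a change of variables could ``turn into the standard Dirichlet problem''. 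The $\sqrt{|k^-|}$-scaling you invoke belongs to the eigenfunctions of $A$ in \eqref{spec1_tad}, for which no Riesz-basis argument is needed since $A$ is self-adjoint (Proposition \ref{self_adjoint_A_tadpole}). The mechanism the paper refers to (the operator $T$ of Proposition \ref{prop-Riesz}) is a bounded multiplication-type map, i.e.\ a componentwise constant rescaling involving $k^-$ that converts the weighted Kirchhoff condition into the standard one, not a change of the spatial variable: rescaling $x\mapsto x/\sqrt{|k^-|}$ on the tail changes the length of that edge, so the resulting function lives on a different tadpole, and ``rescaling back'' undoes the transformation, so your $T\widetilde\psi^n$ would not satisfy the vertex conditions of the standard Dirichlet Laplacian on $\Ti$. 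As written, then, the key object — an explicit isomorphism of $L^2(\Ti)$ carrying $(\widetilde\psi^n)_{n\in\N^*}$ onto the orthonormal eigenbasis of \cite[Section 4.2]{mioecri}, including in the resonant case \eqref{id1} — is not actually produced by your argument; you flag it as the main technical obstacle, but the candidate map you give would fail rather than merely need its constant adjusted.
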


\end{document}